\providecommand{\makenomenclature}{\makeglossary}
\def\RSsubtxt{section~}\newref{sub}{name = \RSsubtxt}}
\def\RSthmtxt{theorem~}\newref{thm}{name = \RSthmtxt}}
\def\RSlemtxt{lemma~}\newref{lem}{name = \RSlemtxt}}
\numberwithin{equation}{section}
\numberwithin{figure}{section}
\theoremstyle{plain}
\newtheorem{thm}{\protect\theoremname}[section]
  \theoremstyle{definition}
  \newtheorem{defn}[thm]{\protect\definitionname}
  \theoremstyle{remark}
  \newtheorem{rem}[thm]{\protect\remarkname}
  \theoremstyle{plain}
  \newtheorem{prop}[thm]{\protect\propositionname}
  \theoremstyle{plain}
  \newtheorem{lem}[thm]{\protect\lemmaname}
  \theoremstyle{remark}
  \newtheorem*{rem*}{\protect\remarkname}
  \theoremstyle{plain}
  \newtheorem{cor}[thm]{\protect\corollaryname}
  \providecommand{\corollaryname}{Corollary}
  \providecommand{\definitionname}{Definition}
  \providecommand{\lemmaname}{Lemma}
  \providecommand{\propositionname}{Proposition}
  \providecommand{\remarkname}{Remark}
\providecommand{\theoremname}{Theorem}
\begin{document}

\title{Kemer's Theory for $H$-Module Algebras with Application to the PI
Exponent}

\author{Yaakov Karasik}

\address{Department of Mathematics, Technion - Israel Institute of Technology,
Haifa 32000, Israel}

\email{yaakov@tx.technion.ac.il}

\keywords{graded algebra, polynomial identity, Hopf algebra, exponent}
\begin{abstract}
Let $H$ be a semisimple finite dimensional Hopf algebra over a field
$F$ of zero characteristic. We prove three major theorems:. 1. The
Representability theorem which states that every $H$-module (associative)
$F$-algebra $W$ satisfying an ordinary PI, has the same $H$-identities
as the Grassmann envelope of an $H\otimes\left(F\mathbb{Z}/2\mathbb{Z}\right)^{*}$-module
algebra which is finite dimensional over a field extension of $F$.
2. The Specht problem for $H$-module (ordinary) PI algebras. That
is, every $H$-$T$-ideal $\Gamma$ which contains an ordinary PI
contains $H$-polynomials $f_{1},...,f_{s}$ which generates $\Gamma$
as an $H$-$T$-ideal. 3. Amitsur's conjecture for $H$-module algebras,
saying that the exponent of the $H$-codimension sequence of an ordinary
PI $H$-module algebra is an integer.
\end{abstract}

\maketitle

\section{Introduction \label{sec:Introduction}}

Two of the main problems in the theory of asssociative algebras satisfying
a polynomial identity (PI in short) are the Specht problem (see \cite{Specht1950})
and the Representability theorem (\cite{Kemer1984}). The classical
Specht problem asks whether a $T$-ideal can be generated as a $T$-ideal
by a finite number of polynomials. The Representability theorem states
that every PI algebra has the same identities (PI equivalent) as the
Grassmann envelope of a $\mathbb{Z}/2\mathbb{Z}$-graded finite dimensional
algebra. Moreover, if the given PI algebra is affine, then it is PI
equivalent to a finite dimensional algebra. The two theorems seems
unrelated, since there is no obvious reason for a $T$-ideal of identities
of (even) a finite dimensional algebra to be finitely based. However,
both of them were solved in the 80's by Kemer \cite{Kemer1984} using
the same ideas. Thus intertwining the two problems.

In the recent decades different classes of algebras, such as non-associative
algebras, group graded algebras, group acted algebras, algebras with
involution, were studied in the context of PI theory. In all of these
frameworks analogs of these problems exist and in some of them also
solved: For finite group-graded algebras satisfying an ordinary PI
see \cite{Aljadeff2010a} (it is worth mentioning that in \cite{Sviridova2011}
the special case of abilean finite groups is treated). For algebras
with involutions satisfying an ordinary PI see \cite{Sviridova2013}.
For affine algebras over fields of non-zero characteristic see \cite{AlexeiBelov-Kanel}.
The assumption that the algebra satisfies an ordinary PI (and not
just a PI of the framework in consideration) is essential for the
Representability theorem, since finite dimensional algebras and the
Grassmann envelope of a finite dimensional algebras are satisfying
an ordinary PI. However, it might be the case that the Specht problem
remains true without this assumption. 

In this paper we work in the framework of $H$-module algebras satisfying
an ordinary PI, where $H$ is a finite dimensional and semisimple
Hopf $F$-algebra ($F$ is a characteristic zero field). Two important
examples of families of algebras which this framework generalizes
are the (finite) group graded algebras and the group acted algebras:
Suppose $G$ is any finite group. By considering $H$ to be the dual
of the group algebra $FG$ we obtain the family of $G$-graded algebras;
whereas by considering $H=FG$ we obtain the family of algebras with
a $G$ action (by $F$-algebra automorphisms). So far the Specht and
Representability problems were open for the latter family in the case
where $G$ is non-abelian. If $G$ is abelian, then these problems
are equivalent to the corresponding problems in the $G$-graded case
(same $G$). 

Let us introduce the notation to discuss these problems. Suppose $H$
is an $m$-dimensional Hopf algebra over a field $F$ of characteristic
zero and let $W$ be an $H$-module algebra over $F$. Suppose $X=\{x_{1},...,x_{n},...\}$
is a set of non-commutative variables and consider the vector space
$V=FX\otimes_{F}H$. An $H$-polynomial is an element in the tensor
algebra (without $1$) over $V$, which we denote by $F^{H}\left\langle X\right\rangle $.
One might prefer instead a coordinate oriented definition of $F^{H}\left\langle X\right\rangle $:
Choose a basis $\{b_{1},...b_{m}\}$ for the $F$-algebra $H$. Then
$F^{H}\left\langle X\right\rangle $ is understood as the $F$-algebra
generated by the formal (non-commutative) variables $x^{b_{i}}$,
where $i\in\{1,...,m\}$ and $x\in X$. Notice that $F^{H}\left\langle X\right\rangle $
is an $H$-module algebra, where 
\[
h\cdot\left((x_{i_{1}}\otimes h_{1})\otimes\cdots\otimes(x_{i_{k}}\otimes h_{k})\right)=(x_{i_{1}}\otimes h_{(1)}h_{1})\otimes\cdots\otimes(x_{i_{k}}\otimes h_{(k)}h_{k})
\]
or 
\[
h\cdot x_{i_{1}}^{h_{1}}\cdots x_{i_{k}}^{h_{k}}=x_{i_{1}}^{h_{(1)}h_{1}}\cdots x_{i_{k}}^{h_{(k)}h_{k}},
\]
where $h_{1},...,h_{k}\in H$ (we use the Swidler notation: $\Delta(h)=h_{(1)}\otimes h_{(2)}$). 

We say that $f\in F^{H}\left\langle X\right\rangle $ is an identity
of $W$ if for every $H$-homomorphism $\phi:F^{H}\left\langle X\right\rangle \rightarrow W$
the polynomial $f$ is in the kernel of $\phi$. Put differently,
$f$ is an identity of $W$ if $f$ vanishes for every substitution
of the variables from $X$ by elements of $W$. The set of all identities,
denoted by $id^{H}(W)$, is an ideal of $F^{H}\left\{ X\right\} $
which is also stable under $H$-endomorphisms. Such an ideal is called
$H$-$T$-ideal. 

Finally, suppose $W_{1}$ and $W_{2}$ are two $H$-module $F$-algebras.
We say that $W_{1}\sim_{H-PI}W_{2}$ ($H$-PI equivalent) if $id^{H}(W_{1})=id^{H}(W_{2})$.
It is crucial to notice that $W\sim_{H-PI}\mathcal{W}$, where $\mathcal{W}$
(always) denotes the relatively free $H$-module algebra $F^{H}\left\{ X\right\} /id^{H}(W)$. 

The main part of this paper is dedicated to proving the following
theorem:
\begin{thm}[Affine $H$-Representability]
\label{thm:affine-rep-1} Let $W$ be an affine $H$-module algebra
over a field $F$ of characteristic zero satisfying an ordinary polynomial
identity, where $H$ is a finite dimensional semisimple Hopf $F$-algebra.
Then there exists a field extension $L$ of $F$ and a finite dimensional
$H$-module algebra $A$ over $L$ which is $H$-PI equivalent to
$W$. 
\end{thm}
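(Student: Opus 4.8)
The plan is to follow the Kemer-style program adapted to the $H$-module setting, in parallel with the treatments of the group-graded case in \cite{Aljadeff2010a} and the involution case in \cite{Sviridova2013}. First I would attach to the $H$-$T$-ideal $\Gamma = \mathrm{id}^H(W)$ a pair of numerical invariants, the \emph{Kemer index} $\mathrm{Ind}(\Gamma) = (\alpha, s)$ (ordered lexicographically): here $\alpha$ measures, roughly, the largest number of ``folded'' Young tableaux layers (in $H$-variables) that can simultaneously be made alternating in a non-identity of $W$, and $s$ counts the maximal number of extra alternating sets of size $\alpha+1$. The ordinary PI hypothesis is exactly what guarantees these invariants are finite, via the Kemer-Regev--Amitsur-type bounds on multiplicities of the $S_n$ (or $H$-wreath) action on the multilinear components. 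Then one isolates the \emph{Kemer polynomials} of $\Gamma$ — the non-identities realizing the index with $s$ ``small'' sets — and proves a Phoenix-type property: any $H$-$T$-ideal strictly containing $\Gamma$ has strictly smaller Kemer index, and certain consequences of Kemer polynomials remain Kemer polynomials.

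Next I would set up the finite-dimensional model. For a finite dimensional $H$-module algebra $A$ over a field extension $L$, basic structure theory (Wedderburn--Malcev in the $H$-equivariant form, available since $H$ is finite dimensional semisimple and $\mathrm{char}\,F=0$) decomposes $A = A_{ss} \oplus J$ with $A_{ss}$ an $H$-semisimple subalgebra and $J = J(A)$ the ($H$-stable) Jacobson radical; one attaches to $A$ its own Kemer-type parameters $(\dim_L A_{ss}', \mathrm{nil}(A))$ and shows these compute $\mathrm{Ind}(\mathrm{id}^H(A))$. The core construction is then: given the affine $W$, build a finite dimensional $H$-module algebra $A$ over a suitable field extension $L$ of $F$ whose semisimple part has the right dimension and whose nilpotency index matches $s$, so that $\mathrm{Ind}(\mathrm{id}^H(A)) = \mathrm{Ind}(\Gamma)$ and, crucially, every Kemer polynomial of $\Gamma$ is a non-identity of $A$ while $\mathrm{id}^H(A) \supseteq \Gamma$. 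The passage to a field extension $L$ is forced, as in the classical case, because the relevant generic/finite-dimensional representative need not be definable over $F$ itself; one works over the algebraic closure and descends to a finitely generated extension.

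With such an $A$ in hand, I would run the inductive closure argument: suppose $\Gamma \subsetneq \mathrm{id}^H(A)$. Pick $f \in \mathrm{id}^H(A) \setminus \Gamma$ and consider $\Gamma' = \Gamma + \langle f \rangle_{H\text{-}T}$. By the Phoenix property $\mathrm{Ind}(\Gamma') < \mathrm{Ind}(\Gamma)$, so by induction on the Kemer index $W' := \mathcal{W}/\langle f\rangle$ (equivalently the algebra with $H$-$T$-ideal $\Gamma'$) is $H$-PI equivalent to a finite dimensional $H$-module algebra $B$ over some extension $L'$. One then shows $\mathrm{id}^H(W) = \mathrm{id}^H(A) \cap \mathrm{id}^H(B')$ for a suitable finite dimensional $B'$ built from $B$ and the Kemer polynomials (a ``gluing'' step: finitely many such $f$'s suffice because each strictly drops the index, which has no infinite descending chains), and a finite direct product of finite dimensional $H$-module algebras over a common extension is again one such. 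The base of the induction is the case of trivial Kemer index, where $\Gamma$ contains a Capelli-type $H$-polynomial and $\mathcal{W}$ is itself (representable by) a finite dimensional algebra.

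The main obstacle I expect is the finite-dimensional realization step — producing $A$ with $\mathrm{Ind}(\mathrm{id}^H(A)) = \mathrm{Ind}(\Gamma)$ and capturing the Kemer polynomials as non-identities. This requires the right $H$-equivariant analogue of Kemer's ``full'' finite dimensional algebras and a careful analysis of how $H$ acts on the semisimple blocks (simple $H$-module algebras over an algebraically closed field, i.e. matrix algebras twisted by the $H$-action) and on the radical layers; controlling the $H$-action on $J$ so that alternations of $\alpha+1$ $H$-variables survive exactly $s$ times is delicate. A secondary difficulty is making the numerical invariants genuinely finite and well-behaved: this rests on translating the $H$-codimension growth into $S_n$-representation data (replacing each ordinary variable by the $m = \dim_F H$ ``components'' $x^{b_i}$) and invoking the hook/strip bounds, which is where the ordinary PI hypothesis on $W$ is indispensable. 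Once these are in place, the Phoenix property and the inductive assembly are, as in the group-graded case, relatively formal.
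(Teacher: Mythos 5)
Your proposal correctly identifies the overall Kemer-style program — index, Kemer polynomials, Phoenix property, induction on the index — and it matches the paper's architecture. However, the inductive closure step as you describe it has a concrete gap, and one of the paper's essential constructions is missing entirely.

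First, you write: pick a single $f \in \mathrm{id}^H(A)\setminus\Gamma$, set $\Gamma' = \Gamma + \langle f\rangle_{H\text{-}T}$, and assert that ``by the Phoenix property $\mathrm{Ind}(\Gamma') < \mathrm{Ind}(\Gamma)$.'' This is not true for an arbitrary such $f$: adding one polynomial to $\Gamma$ only gives $\mathrm{Ind}(\Gamma') \leq \mathrm{Ind}(\Gamma)$, and equality can persist. What the paper actually does is set $\Gamma' = \Gamma + S$ where $S$ is the $H$-$T$-ideal generated by \emph{all} Kemer polynomials of $\Gamma$; then the index drops strictly essentially by the definition of the Kemer index, with no Phoenix needed at that point. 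The Phoenix property is used elsewhere — to show that any multilinear consequence of a Kemer polynomial that lies outside $\Gamma$ re-generates a Kemer polynomial, which is what lets one later conclude that everything in $S\setminus\Gamma$ is a non-identity of the second algebra in the product.

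Second, and more seriously, your ``gluing'' step is where the real work lives and it is not addressed. The finite dimensional model $A$ you build (with the same index and Kemer polynomials as $\Gamma$) is \emph{not} the algebra you finally intersect with; its $T$-ideal strictly contains $\Gamma$ in general, and you have no control over what extra identities it has. The paper's Section 9 constructs a separate representable algebra $B_\Gamma$ — a quotient of an $H$-coproduct $\overline{B}\ast F^H\langle X\rangle$ cut down so the radical ideal is nilpotent, made into a finite module over a Noetherian commutative ring via Shirshov bases and trace identities, hence representable by Beidar's theorem — with the two crucial properties $\mathrm{id}^H(B_\Gamma) \supseteq \Gamma$ and every Kemer polynomial of $\Gamma$ is a non-identity of $B_\Gamma$. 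The final algebra is then $A' \times B_\Gamma$ where $A'$ comes from the induction applied to $\Gamma+S$; showing $\Gamma = \mathrm{id}^H(A'\times B_\Gamma)$ uses Phoenix to push every element of $S\setminus\Gamma$ out of $\mathrm{id}^H(B_\Gamma)$. Your proposal collapses this into a single vague ``suitable $B'$ built from $B$ and the Kemer polynomials,'' but the Shirshov/trace/Noetherian machinery behind representability of $B_\Gamma$ is precisely the core technical content. Relatedly, you also omit the very first step of the paper: constructing some finite dimensional $H$-module algebra with $T$-ideal inside $\Gamma$ at all. The paper does this by taking an ordinary finite dimensional $A_0$ with $\mathrm{id}(A_0)\subseteq\mathrm{id}(W)$ (using the ordinary PI hypothesis and classical Kemer) and then forming $A_0\otimes H^*$ with the regular $H$-action; without this the Capelli identity, and hence the finiteness of the index, never gets off the ground.
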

To state the general $H$-representability theorem we need more notations.
Denote by $E=E_{0}\oplus E_{1}$ the Grassmann superalgebra over $F$.
Suppose $W$ is an $H_{2}=H\otimes_{F}(F\mathbb{Z}/2\mathbb{Z})^{*}$-module
algebra. In other words, $W=W_{0}\oplus W_{1}$ is a superalgebra
endued with $H$-module algebras structure such that $W_{0}$ and
$W_{1}$ are stable under the action of $H$. The Grassmann envelope
of $W$ is the $H_{2}$-module $F$-algebra $E(W)=(W_{0}\otimes E_{0})\oplus(W_{1}\otimes E_{1})$.
The $H$-representability theorem states:
\begin{thm}[H-Representability]
\label{thm:Hrep!!} Let $W$ be an $H$-module algebra over a field
$F$ of characteristic zero satisfying an ordinary polynomial identity,
where $H$ is a finite dimensional semisimple Hopf $F$-algebra. Then
there exists a field extension $L$ of $F$ and a finite dimensional
$H_{2}$-module algebra $A$ over $L$ such that $W\sim_{H-PI}E(A)$. 
\end{thm}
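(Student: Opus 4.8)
The plan is to reduce the general case to the affine case already established in Theorem \ref{thm:affine-rep-1}, by passing through a Grassmann-envelope construction in the spirit of Kemer's original argument. First I would recall the classical trick: for an $H$-module algebra $W$ satisfying an ordinary PI, one forms the tensor product $W \otimes E$, where $E = E_0 \oplus E_1$ is the Grassmann algebra, and observes that $W \otimes E$ is naturally an $H_2 = H \otimes_F (F\mathbb{Z}/2\mathbb{Z})^*$-module algebra, with the $\mathbb{Z}/2\mathbb{Z}$-grading coming from $E$ and the $H$-action acting only on the $W$-factor. The key bookkeeping lemma — which I would either cite from the graded literature or prove directly by the standard sign-manipulation argument — is that $W$ and $W \otimes E$ carry equivalent PI data: an $H$-polynomial identity of $W$ can be translated, via the Grassmann substitution $x^h \mapsto x^h \otimes e$ on even variables and a suitable odd analogue, into an $H_2$-identity of the Grassmann envelope, and conversely. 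Concretely, one wants $W \sim_{H\text{-}PI} E(W^{\#})$ for an appropriate $H_2$-superalgebra $W^{\#}$ built from $W$.

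Second, I would replace $W$ by an \emph{affine} object without losing information about $\mathrm{id}^H(W)$: since every $H$-polynomial uses only finitely many variables and (by finite-dimensionality of $H$) finitely many ``generalized'' variables $x^{b_i}$, the $H$-$T$-ideal $\mathrm{id}^H(W)$ is determined by the identities in any countable set of variables, and one can exhaust $\mathrm{id}^H(W)$ by the identities of the affine $H$-subalgebras generated by finite subsets of a generating set of $W$; more efficiently, the relatively free algebra $\mathcal{W}$ on countably many variables has the same $H$-identities as $W$, and one filters $\mathcal{W}$ by affine relatively free $H$-subalgebras $\mathcal{W}_n$. Each $\mathcal{W}_n$ is an affine $H$-module algebra satisfying an ordinary PI, so Theorem \ref{thm:affine-rep-1} applies and gives a finite dimensional $H$-module algebra $A_n$ over a field extension $L_n$ with $\mathcal{W}_n \sim_{H\text{-}PI} A_n$. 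The subtlety here — and the first real obstacle — is that one must combine the $A_n$ into a \emph{single} $H_2$-module algebra that captures $\mathrm{id}^H(W)$ exactly, not merely an ascending chain of approximations; this is where the Grassmann envelope re-enters, since $E(A)$ for a single finite dimensional $A$ can encode the limit behavior that no single finite dimensional ungraded algebra can.

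Third, and most delicately, I would run the finite-dimensionality argument at the level of $H_2$: apply the affine representability theorem to the affine $H_2$-algebra $W \otimes E$ (or rather to its affine relatively free pieces), obtaining a finite dimensional $H_2$-module algebra $B$ over some extension $L$ with the relevant $H_2$-identities, and then show that $B$ can be taken of the form $E(A)$ for a finite dimensional $H_2$-module superalgebra $A$. This last step is the heart of Kemer's Grassmann-envelope argument: one analyzes the structure of $B$ via its semisimple-plus-radical decomposition (using that $H_2$ is still semisimple and finite dimensional, so an $H_2$-analogue of Wedderburn--Malcev holds), identifies the ``Grassmann-like'' behavior of $B$, and constructs $A$ so that $E(A)$ and $B$ have the same $H_2$-identities — hence $W \sim_{H\text{-}PI} E(A)$ after translating back through the $W \leftrightarrow W \otimes E$ correspondence. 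I expect the main obstacle to be exactly this structural passage from a general finite dimensional $H_2$-algebra to a Grassmann envelope: it requires an $H$-equivariant refinement of the classification of ``basic'' algebras and a careful tracking of how the $\mathbb{Z}/2\mathbb{Z}$-grading interacts with the $H$-action, ensuring that the parity of radical elements and the placement of the nilpotent part are compatible with the Hopf action; the semisimplicity of $H$ (hence of $H_2$) is what makes this tractable, but the nonabelian/noncocommutative generality of $H$ means the bookkeeping is genuinely heavier than in the group-graded case.
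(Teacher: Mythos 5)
Your overall plan---reduce to the affine case via the Grassmann envelope and invoke \thmref{affine-rep-1}---is the right high-level strategy and matches the paper's, but two of your steps do not work as stated. In step two you correctly flag the ``limit'' problem (how to combine an ascending chain of finite dimensional approximations $A_n$ into one object), but do not resolve it; and in step three you propose to apply affine representability to ``the affine $H_2$-algebra $W \otimes E$,'' which is not affine, since $E$ is infinite dimensional. The paper's essential missing ingredient here is a theorem of Berele, cited as \thmref{affine-rep}: if $W$ satisfies an ordinary PI, then there are $d,l$ (determined by the degree of that PI) such that $id^H(W) = id^H(E(L))$ for the \emph{single affine} $H_2$-module algebra $L = L_{d,l}/id^{H_2}(W_E)$. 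This converts the infinite problem into a question about one affine $H_2$-algebra in one stroke; without it, or an equivalent single-affine-algebra reduction, there is nothing to feed into \thmref{affine-rep-1}, so this is a genuine gap and not a bookkeeping detail.

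The more fundamental confusion is the end of your step three, where you propose to ``show that $B$ can be taken of the form $E(A)$ for a finite dimensional $H_2$-module superalgebra $A$,'' i.e.\ to realize a given finite dimensional $H_2$-algebra as a Grassmann envelope. This is false in general and, more to the point, not needed: it gets the direction of the argument backwards. Once $L$ is an affine $H_2$-module algebra with $W \sim_{H-PI} E(L)$, one applies \thmref{affine-rep-1} \emph{with $H$ replaced by $H_2$} (legitimate because $H_2 = H \otimes (F\mathbb{Z}/2\mathbb{Z})^*$ is again finite dimensional and semisimple --- you use this tacitly but never say it) to get a finite dimensional $H_2$-module algebra $A$ over an extension field with $L \sim_{H_2-PI} A$. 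Since the Grassmann envelope respects PI-equivalence, $E(L) \sim_{H-PI} E(A)$, and hence $W \sim_{H-PI} E(A)$. That $A$ is the algebra in the statement of the theorem; no further structural passage from a finite dimensional $H_2$-algebra to a Grassmann envelope is required or possible.
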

In the final section of this paper we obtain:
\begin{thm}[Specht]
\label{thm:Specht!!!} Suppose $\Gamma$ is an $H$-$T$-ideal containing
an ordinary identity, then there are $f_{1},...,f_{s}\in\Gamma$ which
$H$-$T$-generate $\Gamma$. Equivalently, if $\Gamma_{1}\subseteq\Gamma_{2}\subseteq\cdots$
is an ascending chain of $H$-$T$-ideals containing an ordinary PI,
then the chain stabilizes. 
\end{thm}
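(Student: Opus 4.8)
The plan is to deduce the Specht property from the $H$-representability theorem (Theorem~\ref{thm:Hrep!!}) by a Noetherian induction on a numerical invariant attached to $H$-$T$-ideals, exactly as in the classical and group-graded settings. First recall that the two formulations in the statement are equivalent, and that it is enough to prove: \emph{every $H$-$T$-ideal $\Gamma$ containing an ordinary PI is finitely $H$-$T$-generated.} Indeed, given an ascending chain $\Gamma_{1}\subseteq\Gamma_{2}\subseteq\cdots$ with $\Gamma_{1}$, hence every $\Gamma_{i}$, containing a fixed ordinary identity $f_{0}$, the union $\Gamma:=\bigcup_{i}\Gamma_{i}$ is again an $H$-$T$-ideal containing $f_{0}$; if $\Gamma$ is $H$-$T$-generated by $g_{1},\dots,g_{s}$ and $N$ is chosen with $g_{1},\dots,g_{s}\in\Gamma_{N}$, then $\Gamma=\Gamma_{N}$ and the chain stabilizes. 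Now, for such a $\Gamma$, its relatively free algebra $\mathcal{W}=F^{H}\langle X\rangle/\Gamma$ satisfies the ordinary identity $f_{0}$, so Theorem~\ref{thm:Hrep!!} produces a finite dimensional $H_{2}$-module algebra $A$ over a field extension $L$ of $F$ with $\Gamma=id^{H}(E(A))$; descending the eventual finite set of generators from $L$ back to $F$ is a standard argument on multilinear components.

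To each $H$-$T$-ideal $\Gamma$ containing an ordinary PI one attaches its \emph{Kemer index}: the lexicographically least pair (size of the ``contributing'' semisimple part, number of its glued blocks) taken over all finite dimensional $H_{2}$-models $A$ with $id^{H}(E(A))=\Gamma$. The attainable indices form a well-ordered set, so one may induct on the Kemer index of $\Gamma$. In the minimal case $E(A)$ is $H$-PI equivalent to a nilpotent $H$-module algebra; then $\Gamma$ contains an $H$-$T$-ideal of ``nilpotency type'', together with which the finitely many multilinear $H$-polynomials of small degree lying in $\Gamma$ generate it, by an elementary direct argument.

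For the inductive step one fixes a ``full'' finite dimensional model $A$ realizing the Kemer index of $\Gamma$, and extracts from its $H$-equivariant Wedderburn--Malcev decomposition finitely many \emph{Kemer $H$-polynomials} $h_{1},\dots,h_{r}\in F^{H}\langle X\rangle$. One then shows that the $H$-$T$-ideal $\Gamma^{+}$ generated by $\Gamma\cup\{h_{1},\dots,h_{r}\}$ has the form $id^{H}(E(A^{+}))$ for a finite dimensional $A^{+}$ of \emph{strictly smaller} Kemer index, and that $\Gamma$ is recovered from $\Gamma^{+}$ by adjoining only finitely many further $H$-polynomials (encoding that an identity of $E(A^{+})$ whose ``Kemer support'' has been exhausted already lies in $\Gamma$). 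By the inductive hypothesis $\Gamma^{+}$ is finitely $H$-$T$-generated, hence so is $\Gamma$, completing the induction.

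The index bookkeeping and the nilpotent base case are soft; the crux, and the main obstacle, is the inductive step: producing the finitely many Kemer $H$-polynomials and proving simultaneously that adjoining them strictly lowers the Kemer index while keeping $\Gamma$ only finitely many polynomials away from $\Gamma^{+}$. This is where the machinery developed for Theorems~\ref{thm:affine-rep-1} and \ref{thm:Hrep!!} is used in full force: the structure theory of finite dimensional $H$-module algebras with $H$ semisimple (equivariant Wedderburn--Malcev, the $H$-action on the radical), the representability of the relevant relatively free algebras (so that ``only finitely many new identities are needed'' becomes a finiteness statement tested on a single concrete finite dimensional model), and the Kemer-lemma ``pumping'' control on alternating sets of $H$-variables. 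As in the ungraded and group-graded cases, this step is essentially as deep as the representability theorem itself, and once it is available the Specht property follows formally.
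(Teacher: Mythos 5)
Your scaffolding (reduce to affine via representability, attach a Kemer index, induct on it) matches the paper's high-level strategy, but the inductive step you outline is both logically backwards and missing the actual mechanism. You set $\Gamma^{+}=\Gamma+\langle h_{1},\dots,h_{r}\rangle_{H}$, so $\Gamma\subseteq\Gamma^{+}$; one cannot then ``recover $\Gamma$ from $\Gamma^{+}$ by adjoining finitely many further polynomials,'' since adjoining only enlarges the ideal. You also never explain why finite generation of $\Gamma^{+}$ would propagate to $\Gamma$, and you concede this is ``the crux'' while offering no argument. Finally, the ideal of Kemer polynomials of $\Gamma$ is not $H$-$T$-generated by any obvious finite set $h_{1},\dots,h_{r}$ ``extracted'' from a model $A$ — for each rank $\mu$ one produces new Kemer polynomials — so the step that replaces that ideal by a finite set is itself an unjustified appeal to the theorem you are trying to prove.

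The paper avoids all this by staying entirely in the ascending-chain formulation. After passing to a subsequence where the Kemer index $p$ is constant and writing each $\Gamma_{i}=id^{H}(A_{i})$ with $A_{i}=\hat A_{i,1}\times\cdots\times\hat A_{i,r_{i}}\times\tilde A_{i}$ (the $\hat A_{i,j}$ of index $p$, the $\tilde A_{i}$ of lower index), two ingredients you never mention carry the argument: (i) the Etingof--Gelaki finiteness theorem, which gives finitely many $H$-semisimple algebras $R_{1},\dots,R_{t}$ of the relevant dimension; and (ii) the coproduct construction $C_{j,i}=\bigl(R_{j}\ast F^{H}\langle X\rangle\bigr)/\bigl(id^{H}_{\bar C_{j,i}}(A_{i})+\langle X\rangle_{H}^{s}\bigr)$, producing finite dimensional algebras $C_{i}=\prod_{j}C_{j,i}$ with $id^{H}(C_{i}\times\tilde A_{i})=id^{H}(A_{i})$ and with $C_{i}\twoheadrightarrow C_{i+1}$, so that the $C_{i}$ stabilize to a single $B$ by a dimension count. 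The chain becomes $id^{H}(B\times\tilde A_{1})\subseteq id^{H}(B\times\tilde A_{2})\subseteq\cdots$; with $I$ the $H$-$T$-ideal of Kemer polynomials of $B$, the intersections $\Gamma_{i}\cap I$ are \emph{constant} (because $I\subseteq id^{H}(\tilde A_{i})$ for all $i$), and the sums $\Gamma_{i}+I$ stabilize by the inductive hypothesis on lower Kemer index, which together force the $\Gamma_{i}$ themselves to stabilize. This ``$\cap I$ constant, $+I$ stabilizes'' trick is the real inductive engine, and it is the piece your proposal neither states nor replaces.
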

The first and main part of this article is the proof of \thmref{affine-rep-1}.
For this we follow, for the most part, the exposition of Kemer's proof
given in \cite{Aljadeff}. However, there are two major differences.
The first is the proof of ``Kemer Lemma 1'' and the second is the
construction of ``representable spaces'' for the Kemer polynomials
(see \secref{Sketch-of-the} for details). The conclusion of \thmref{Hrep!!}
and \thmref{Specht!!!} is completely standard and we use the same
argument as in \cite{Aljadeff2010a,Kanel-Belov2005}.

Let us recall the definition of the $H$-codimension sequence of an
$H$-module algebra:
\begin{defn}
Let $W$ be an $H$-module $F$-algebra. The $H$-codimension of $W$
is 
\[
c_{n}^{H}(W)=\dim_{F}P_{n}^{H}/P_{n}^{H}\cap id^{H}(W),
\]
where $P_{n}^{H}$ is the $F$-space spanned by $x_{\sigma(1)}^{h_{1}}\cdots x_{\sigma(n)}^{h_{n}}$,
where $\sigma\in S_{n}$ and $h_{1},...,h_{n}\in H$. 
\end{defn}
A consequence of \thmref{Hrep!!} is the affirmative solution of Amitsur's
conjecture on the exponent in the case of general $H$-module $F$-algebras. 
\begin{thm}[$H$-Amitsur's Conjecture]
\label{thm:exp} Suppose $W$ is any $H$-module $F$-algebra which
satisfies an ordinart PI, then the $H$-exponent of $W$ defined by
\[
\exp^{H}(W)=\underset{n\to\infty}{\lim}\sqrt[n]{c_{n}^{H}(W)}
\]
exist and is an integer. 
\end{thm}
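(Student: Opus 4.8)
The plan is to reduce the statement for $W$ to the statement for a finite-dimensional $H_2$-module algebra via \thmref{Hrep!!}, and then to run the standard ``exponent from representation theory'' machinery for the latter. First I would invoke \thmref{Hrep!!} to obtain a field extension $L/F$ and a finite-dimensional $H_2$-module $L$-algebra $A$ with $W\sim_{H\text{-}PI}E(A)$. Since $H$-PI equivalence gives $c_n^H(W)=c_n^H(E(A))$ for all $n$, it suffices to prove that $\exp^H(E(A))$ exists and is an integer. A routine extension-of-scalars argument (the $H$-codimensions do not change when passing from $F$ to $L$, because $P_n^H$ and $id^H$ behave well under base change — this is the usual observation that codimensions are invariant under field extensions) lets me assume $L=F$, so the problem becomes: compute the $H_2$-exponent of the Grassmann envelope of a finite-dimensional $H_2$-module algebra.

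Next I would set up the Wedderburn-type decomposition for the finite-dimensional $H_2$-module algebra $A$. By semisimplicity of $H$ (hence of $H_2$, using that $(F\mathbb{Z}/2\mathbb{Z})^*$ is semisimple in characteristic zero) one has an $H_2$-invariant analog of the Wedderburn--Malcev decomposition $A = A_{ss}\oplus J$, where $J=J(A)$ is the Jacobson radical (which is an $H_2$-stable ideal) and $A_{ss}$ is an $H_2$-semisimple subalgebra, a direct sum $A_{ss}=A_1\oplus\cdots\oplus A_q$ of $H_2$-simple components. The candidate value for the exponent is
\[
\exp^H(E(A))=\max\left\{\dim_F(A_{i_1}\oplus\cdots\oplus A_{i_r}) : A_{i_1}J A_{i_2}J\cdots J A_{i_r}\neq 0\right\},
\]
that is, the largest total dimension of a chain of distinct $H_2$-simple components that can be ``linked'' through the radical. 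This integer is by construction a non-negative integer, so the only thing left is to prove that it really is the exponential growth rate of $c_n^H(E(A))$.

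The estimates themselves split into a lower and an upper bound, each proved by the now-standard techniques adapted to the $H$-setting. For the lower bound one fixes an optimal chain of $H_2$-simple components, picks radical elements $w_1,\dots,w_{r-1}$ realizing the nonzero product, and constructs explicit multilinear $H$-polynomials — built from alternating sets on bases of the semisimple components tensored appropriately with the relevant parts of the Grassmann algebra — that are nonidentities of $E(A)$; counting these via the hook/Young-diagram dimension asymptotics (exactly as in the ungraded and graded cases) gives $c_n^H(E(A))\ge C n^{-t}d^n$ for the claimed $d$. For the upper bound one shows that any multilinear $H$-polynomial whose ``alternation pattern'' forces it to involve more than $d$ essential semisimple slots must vanish on $E(A)$ (using the nilpotency of $J$, the finitely many $H_2$-simple pieces, and the sign cancellation coming from $E$), which bounds $c_n^H(E(A))\le C' n^{t'} d^n$. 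Combining the two bounds and taking $n$-th roots shows the limit exists and equals the integer $d$. I expect the main obstacle to be the upper bound: one must carefully track how the $H$-action interacts with the alternation arguments and the Grassmann cancellation, since in the $H$-module setting a single ``variable'' $x$ carries the whole $m$-dimensional space $x\otimes H$, and one has to verify that the classical combinatorial bookkeeping (Kemer's ``pigeonhole on simple components'' together with Regev-type dimension bounds) survives intact; the reduction steps and the lower bound are comparatively routine given \thmref{Hrep!!}.
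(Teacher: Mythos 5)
Your outline matches the paper's strategy in all the essentials: reduce to $W\sim_{H\text{-}PI}E(A)$ via \thmref{Hrep!!}, use the $H_2$-invariant Wedderburn--Malcev decomposition, identify the candidate integer as the Giambruno--Zaicev-type ``maximal linked chain of $H_2$-simple components'' dimension, and sandwich $c_n^H(E(A))$ between $Cn^{\gamma}d^n$ and $C'n^{\gamma'}d^n$ using alternating polynomials and Young tableau asymptotics.

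The one place where you are making the argument harder than it needs to be is in treating the finite-dimensional exponent computation as something to be redone. The paper does not prove the lower and upper bounds for a finite-dimensional $H_2$-algebra from scratch: it cites Gordienko's result that the theorem already holds for finite-dimensional $H_2$-module algebras, and then the only remaining task is the bridge $\exp^H(E(A))=\exp^{H_2}(A)$. For the upper bound of that bridge the paper simply observes the embedding $P_n^H/(P_n^H\cap id^H(E(A)))\hookrightarrow P_n^{H_2}/(P_n^{H_2}\cap id^{H_2}(E(A)))$ induced by $x\mapsto y+z$, giving $c_n^H(E(A))\le c_n^{H_2}(E(A))=c_n^{H_2}(A)$, so Gordienko's upper bound on $c_n^{H_2}(A)$ transfers for free --- no ``sign cancellation from $E$'' pigeonhole argument is needed. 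For the lower bound the paper takes Gordienko's Lemma 10 (large alternating nonidentities of $A$ with a bounded number of leftover variables), upgrades them to Kemer polynomials via Kemer's Lemma 2, passes to $E(A)$ via the standard $f\mapsto\tilde f$ Grassmann conjugation, and then runs a precise $S_n$-module analysis showing that appropriate symmetrizers $e_{T_{\lambda'}}e_{T_{\lambda''}}$ with $\lambda'=(\mu^d)$ and $\lambda''=(l^\mu)$ do not kill $\tilde f$; the Littlewood--Richardson rule and hook-length asymptotics then give $c_n^H(E(A))\ge Cn^\gamma(\exp^{H_2}(A))^n$. Your sketch of the lower bound is consistent with this but glosses over the $\tilde f$ step, which is the specific mechanism by which the Grassmann envelope enters the estimate. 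The extension-of-scalars reduction you invoke is fine (codimensions are insensitive to base change in characteristic zero). So: correct route, same key objects, but you should lean on Gordienko for the finite-dimensional case rather than rebuilding it, and the heart of the remaining work is the $\tilde f$ conjugation plus the Young-symmetrizer shape argument, not a direct combinatorial upper bound on $E(A)$.
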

Using the ideas of Gordianko and Zaicev in \cite{Giambruno2005} and
Gordienko in \cite{Gordienko2013} we will obtain this theorem in
the final section.

\selectlanguage{american}%

\section{Preliminaries}

\selectlanguage{english}%
There are two families of $H$-polynomails which play a leading role
in PI theory: multilinear and alternating polynomials:
\begin{defn}
$f(x_{1},...,x_{n})\in F^{H}\left\{ X\right\} $ is \emph{multilinear}
if 
\[
f(x_{1},...,x_{i-1},\alpha x_{i}+y,x_{i+1},...,x_{n})=\alpha f(x_{1},...,x_{n})+f(x_{1},...,x_{i-1},y,x_{i},...,x_{n})
\]
for every $i$ between $1$ to $n$ and $\alpha\in F$.\end{defn}
\begin{rem}
In the case where $H$ is the dual of the group algebra $FG$ (here
$G$ is a finite group) the polynomial:
\[
f(x,y)=x_{g}y+y_{g}x_{e}
\]
is multilinear by our definition.\end{rem}
\begin{defn}
Let $f=f(x_{1},...,x_{n})\in F^{H}\left\{ X\right\} $. For $1\leq i,j\leq n$
we denote by $f|_{x_{i}\to x_{j}}=f|_{x_{i}=x_{j}}$ the polynomial
obtained from $f$ by substituting $x_{i}$ inside $x_{j}$. Moreover,
$f|_{x_{i}\leftrightarrow x_{j}}$ denotes the polynomial obtained
from $f$ by replacing $x_{i}$ by $x_{j}$and vice versa. 
\end{defn}

\begin{defn}
Suppose $f=f(x_{1},...,x_{n},Y)\in F^{H}\left\{ X\right\} $, where
$Y$ is a set of variables disjoint from $x_{1},...x_{n}$. We say
that $f$ is alternating on $x_{1},...,x_{n}$ if
\[
f|_{x_{i}\leftrightarrow x_{j}}=-f
\]
for every $i$ and $j$ between $1$ to $n$. Since the characteristic
of $F$ is not $2$ this is equivalent to 
\[
f|_{x_{i}=x_{j}}=0.
\]

If $f=f(X,Y)\in F^{H}\left\{ X\right\} $ is any polynomial we define
\[
\mbox{Alt}_{X}(f)=\sum_{\sigma\in S_{X}}(-1)^{\sigma}f|_{x\in X\leftarrow\sigma(x)}.
\]
\nomenclature[97]{$\mbox{Alt}_{X}(f)$}{The polynomial obtained from $f$ by alternating the variables from the set $X$}Therefore,
$\mbox{Alt}_{X}(f)$ is alternating on $X$. If $f$ was alternating
on $X$ to begin with, then $\mbox{Alt}_{X}(f)=|X|!\cdot f$.
\begin{rem}
As in classical PI theory any $H$-$T$-ideal $\Gamma$ is $T$-generated
by the multilinear polynomials inside $\Gamma$.
\end{rem}
\end{defn}
Finally, suppose $W_{1}$ and $W_{2}$ are two $H$-module $F$-algebras.
We say that $W_{1}\sim_{H-PI}W_{2}$ ($H$-PI equivalent) if $id^{H}(W_{1})=id^{H}(W_{2})$.
It is crucial to notice that $W\sim_{H-PI}\mathcal{W}$, where $\mathcal{W}$
(always) denotes the relatively free $H$-module algebra $F^{H}\left\{ X\right\} /id^{H}(W)$.

\section{Sketch of the proof of \thmref{affine-rep-1} \label{sec:Sketch-of-the}}

In this short section we outline the main steps of the proof of \thmref{affine-rep-1}.
\begin{enumerate}
\item Every affine (ordinary) PI $H$-module $F$-algebra $W$ has a a finite
dimensional $H$-module $F$-algebra $A$ such that $id^{H}(A)\subseteq id^{H}(W)$. 
\item Definition of the $H$-Kemer index $\mbox{Ind}(\Gamma)=(\alpha,r)\in\Omega=\mathbb{Z}^{\ge0}\times\mathbb{Z}^{\ge0}$
\nomenclature[98]{$\mbox{Ind}(\Gamma)=\left(\alpha(\Gamma),s(\Gamma)\right)$}{The $H$-Kemer index of the $H-T$-ideal $\Gamma$}and
$H$-Kemer polynomials for $H$ $T$-ideals of $H$-module algebras
satisfying some Capelli identity. Since by the previous step any affine
PI $H$-module algebra satisfies a Capelli identity, the index is
defined for all the algebras under consideration.

Considering the lexicographic ordering ($\leq$) on $\Omega$ it will
be easy to conclude that if $\Gamma_{1}\subseteq\Gamma_{2}$ then
$\mbox{Ind}(\Gamma_{1})\leq\mbox{Ind}(\Gamma_{2})$ (reverse ordering). 

\item Construction of $H$-basic algebras. Every $H$-basic $H$-module
algebra $A$ is finite dimensional and has the property $\mbox{Ind}(A)=\mbox{Par}(A)=(d,s-1)$,
where $d$ is the dimension of the semisimple part of $A$ and $s$
is the nilpotency of $J(A)$, the radical of $A$. We show that every
finite dimensional $H$-module algebra is $H$-PI equivalent to a
finite direct product of $H$-basic algebras. As far as the author
knows, this step in all other frameworks (e.g. group graded algebras,
algebras with involutions) relies heavily on precise knowledge of
all the simple, finite dimensional objects of the category in question
(see \cite{Aljadeff,Aljadeff2010a}). However, in such general framework
as $H$-module algebras it seems that one must consider more ``subtle''
approach. Luckily, such approach was already introduced for different
purpose by Gordienko in \cite{Gordienko2013}. 
\item There is a finite dimensional $H$-module algebra $B$ having the
same $H$-Kemer index and $H$-Kemer polynomials as $W$. 
\item Using steps $3$ and $4$ the Phoenix property for $H$-$T$-ideals
will follow. This property states that if $f\notin\Gamma$ is a consequence
of an $H$-Kemer polynomial of $\Gamma$, then although $f$ might
fail being an $H$-Kemer polynomial, yet it has a consequence $f^{\prime}$
which is an $H$-Kemer polynomial of $\Gamma$. 
\item Construction of a \textbf{representable} $H$-module algebra $B_{\Gamma}$
satisfying the properties:

\begin{itemize}
\item $id^{H}(B_{\Gamma})\supseteq\Gamma$.
\item All $H$-Kemer polynomials of $\Gamma$ are non-identities of $B_{\Gamma}$.
\end{itemize}
\item We finalize the proof. consider $\Gamma^{\prime}=\Gamma+S$, where
$S$ is the $H$-$T$-ideal \textit{generated} by all $H$-Kemer polynomials
of $\Gamma$. This will imply that $\mbox{Ind}(\Gamma^{\prime})<\mbox{Ind}(\Gamma)$
and hence by induction on the $H$-Kemer index there exists a finite
dimensional $H$-module algebra $A'$ with $\Gamma'=id^{H}(A')$.
We show that \textit{all} polynomials of $S$ (which are not in $\Gamma$)
are nonidentities of $B_{\Gamma}$ (that is, not just elements in
$S$ which are $H$-Kemer polynomials). This is achieved by the Phoenix
property of Kemer polynomials. Since any nonidentity $f'$ of $\Gamma$
which is in $S$, produces (by the $T$-operation) a Kemer polynomial
which by Step $5$ is not in $id^{H}(B_{\Gamma})$ we have also that
$f'\notin id^{H}(B_{\Gamma})$. From that one concludes that $\Gamma=id^{H}(A'+B_{\Gamma})$.
\end{enumerate}

\section{Getting started\label{sec:Getting-started}\label{sec:the_index_of_=000024T=000024-ideals}}
\begin{thm}
Suppose $W$ is an affine $H$-module algebra which satisfies an ordinary
PI, then there is a finite dimensional $H$-module $F$-algebra $A$
such that $id^{H}(A)\subseteq id^{H}(W)$.\end{thm}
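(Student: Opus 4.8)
The plan is to reduce the statement to the classical (non-$H$) Representability theorem by ``encoding'' the $H$-action into an ordinary algebra structure on a larger algebra, and then pull back a finite dimensional approximant. First I would pass from $W$ to the smash product $W \# H^{*}$ (equivalently, since $H$ is semisimple and finite dimensional, use the duality between $H$-module algebras and $H^{*}$-comodule algebras). The key point is that $W$ affine and ordinary PI forces $W\#H^{*}$ to be affine (it is generated by the generators of $W$ together with the finite dimensional $H^{*}$) and also ordinary PI: a PI for $W$ of degree $d$ yields, via a standard averaging / Regev-type argument using that $\dim_{F}H = m < \infty$, an ordinary PI for $W\#H^{*}$ (its codimension growth is at most $m$ times that of the $H$-codimension of $W$, which is exponentially bounded because $W$ satisfies an ordinary PI).

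Next I would invoke Kemer's Representability theorem (the affine case) for the ordinary affine PI algebra $W\#H^{*}$: there is a finite dimensional $F$-algebra $\tilde A$ with $id(\tilde A)\subseteq id(W\#H^{*})$; in fact one can take $\tilde A$ to be a finite dimensional algebra that is \emph{PI-equivalent} to a subalgebra, but for our purposes the inclusion of $T$-ideals is all that is needed. Then I would equip (a suitable modification of) $\tilde A$ with an $H$-module algebra structure. This is the delicate part: $\tilde A$ a priori has no $H$-action. The fix is to work inside $W\#H^{*}$ more carefully — take $A$ to be a finite dimensional $H$-submodule subalgebra of the relatively free algebra $\mathcal W = F^{H}\{X\}/id^{H}(W)$ obtained by intersecting $\mathcal W$ with the image of a finite dimensional representable model. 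Concretely, one shows that $\mathcal W$ sits inside $M_{k}(C)$ for some commutative $F$-algebra $C$ compatibly with the $H$-action (this is where representability of the ordinary algebra $W\#H^{*}$ is used, after translating back through the $H/H^{*}$ duality), localizes/specializes $C$ to cut down to finite dimension, and verifies that the resulting finite dimensional $A$ is $H$-stable and satisfies $id^{H}(A)\subseteq id^{H}(W) = id^{H}(\mathcal W)$.

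The main obstacle is precisely the compatibility of the $H$-action with the finite dimensional representable model: Kemer's theorem is applied to a plain associative algebra and returns a plain finite dimensional algebra, so one must show that the construction can be performed $H$-equivariantly — either by checking that every step in Kemer's affine argument (passing to the generic algebra, the trace ring, the finite dimensional quotient) carries the $H$-action along, or, cleanly, by transferring everything through the smash product and using that an $H^{*}$-comodule structure on an algebra $R$ is the same as an ordinary algebra homomorphism $R \to R\otimes H^{*}$, which \emph{is} preserved under the relevant localizations and quotients. I would carry this out via the smash-product route, as it localizes the difficulty to a single functorial observation rather than a line-by-line audit of Kemer's proof. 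Everything else — affineness, the ordinary PI bound on $W\#H^{*}$, and the final translation $id^{H}(A)\subseteq id^{H}(W)$ — is routine once the equivariant finite dimensional model is in hand.
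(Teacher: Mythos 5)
Your proposal takes a genuinely different and substantially more complicated route than the paper, and it contains a real gap at the step you yourself flag as ``the delicate part.'' The smash product $W\#H$ is indeed affine and ordinary PI, so Kemer's affine representability yields a finite dimensional algebra $\tilde A$ with $id(\tilde A)\subseteq id(W\#H)$. But $\tilde A$ is just a plain associative algebra; it carries no $H$-module structure, and there is no functor from (finite dimensional algebras with $T$-ideal above $id(W\#H)$) to ($H$-module algebras with $H$-$T$-ideal above $id^{H}(W)$). Your proposed fix---embed $\mathcal W$ into $M_{k}(C)$ equivariantly, then ``localize/specialize $C$ to cut down to finite dimension'' while keeping $H$-stability---is not a filled-in argument but a restatement of the difficulty; carrying it out carefully is essentially the content of the entire rest of the paper (Kemer index, Kemer polynomials, the trace ring construction, etc.). For the present lemma, which only requires an \emph{inclusion} $id^{H}(A)\subseteq id^{H}(W)$ and not $H$-PI equivalence, that machinery is overkill and in any case cannot be invoked circularly.

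The paper's proof bypasses the smash product entirely. Apply the classical affine representability result to $W$ \emph{as a plain algebra} to get a finite dimensional $F$-algebra $A_{0}$ with $id(A_{0})\subseteq id(W)$. Then set $A=A_{0}\otimes H^{*}$ with the $H$-action $h\cdot(a\otimes\phi)=a\otimes\phi_{h}$, $\phi_{h}(g)=\phi(gh)$; one checks directly that this is a finite dimensional $H$-module algebra. The inclusion $id^{H}(A)\subseteq id^{H}(W)$ is then established by a ``linear independence of translates'' argument: choose $\phi\in H^{*}$ whose $H$-translates $h_{1}\cdot\phi,\ldots,h_{m}\cdot\phi$ are $F$-linearly independent, evaluate one variable of a multilinear $f\in id^{H}(A)$ at $a_{1}\otimes\phi$ and the rest at $a_{i}\otimes 1$, and read off from the linear independence that each ``$H$-component'' $g_{i}$ of $f$ (the sum of monomials whose first variable carries the action of a fixed basis element $h_{i}$) is itself in $id^{H}(A)$. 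Replacing $x_{1}^{h_{i}}$ by an undecorated $x_{1}$ and repeating for each variable strips the $H$-decoration off completely, reducing the question to whether certain \emph{ordinary} multilinear polynomials lie in $id(W)$, which holds by the choice of $A_{0}$. This is both shorter and conceptually cleaner than forcing the $H$-action through Kemer's black box; you should adopt it, or at least replace your sketchy transfer step with the explicit tensor-with-$H^{*}$ construction.
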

\begin{proof}
By the classical PI theory (see Corollary 4.9 in \cite{Kanel-Belov2005})
there is an $F$-algebra $A_{0}$ with the property $id(A_{0})\subseteq id(W)$.
Consider the $H$-module algebra $A=A_{0}\otimes H^{*}$, where the
$H$-action is given by 
\[
h(a\otimes\phi)=a\otimes\phi_{h},\,\phi_{h}(g)=\phi(gh)\,\,\, g,h\in H,\phi\in H^{*}.
\]
Since 
\begin{eqnarray*}
\left(h\left(\phi\cdot\psi\right)\right)(g) & = & \left(\phi\cdot\psi\right)(gh)=\phi(g_{(1)}h_{(1)})\psi(g_{(2)}h_{(2)})=\phi_{h_{(1)}}(g_{(1)})\phi_{h_{(2)}}(g_{(2)})\\
 & = & \left(\phi_{h_{(1)}}\cdot\phi_{h_{(2)}}\right)(g)=\left(h_{(1)}\phi\cdot h_{(2)}\psi\right)(g)
\end{eqnarray*}
and 
\[
\left(h\cdot\epsilon\right)(g)=\epsilon(gh)=\epsilon(h)\epsilon(g),
\]
we indeed defined an $H$-action. 

Suppose $f=f(x_{1},...,x_{n})\in id^{H}(A)$. We need to show that
$f\in id^{H}(W)$. Let $\phi\in H^{*}$ be defined by $\phi(h_{i})=\delta_{1,i}\in F$,
where $i=1,...,m=\dim_{F}H$. The important property of $\phi$ is
that $h_{1}\cdot\phi,...,h_{m}\cdot\phi$ are linearly independent
over $F$. Consider the substitution $\bar{x}_{1}=a_{1}\otimes\phi,\bar{x}_{2}=a_{2}\otimes1,...,\bar{x}_{n}=a_{n}\otimes1$
(here $1$ is the functional of $H$ which equals to $1$ at every
point). We obtain 
\[
f(\bar{x}_{1},...,\bar{x}_{n})=\sum_{i=1}^{m}g_{i}(a_{1},...,a_{n})\otimes h_{i}\cdot\phi,
\]
where $g_{i}\in F^{H}\left\{ X\right\} $ is multilinear polynomial
all of whose monomials contain the variable $x_{1}^{h_{i}}$. Therefore,
$g_{i}\in id^{H}(A)$. We may replace $x_{1}^{h_{i}}$ by $x_{1}$
and obtain $g_{i}^{(1)}\in id^{H}(A)$. Notice that it suffices to
show that $g_{1}^{(1)},...,g_{m}^{(1)}\in id^{H}(W)$. 

Repeat the argument for each one of the polynomials $g_{1}^{(1)},...,g_{m}^{(1)}$,
by considering the substitution $\bar{x}_{1}=a_{1}\otimes1,\bar{x}_{2}=a_{2}\otimes\phi,\bar{x}_{3}=a_{3}\otimes1,...,\bar{x}_{n}=a_{n}\otimes1$.
This will result in multilinear polynomials $g_{1}^{(2)},...,g_{m^{2}}^{(2)}\in id^{H}(A)$
having the properties: 
\begin{itemize}
\item All the monomials of each $g_{i}^{(2)}$ contain $x_{1}$ and $x_{2}$.
\item If $g_{1}^{(2)},...,g_{m^{2}}^{(2)}\in id^{H}(A)$, then $f\in id^{H}(A)$.
\end{itemize}

\selectlanguage{american}%
Repeating this argument \foreignlanguage{english}{eventually results
in the conclusion that $f\in id^{H}(W)$ if and only if some (ordinary!)
polynomials $g_{1}^{(n)},...,g_{m^{n}}^{(n)}\in id(A)$ are in $id(W)$.
However, this indeed holds due to the assumption on $A$.}

\end{proof}
\selectlanguage{english}%
\begin{defn}
\selectlanguage{english}%
Let $W$ be an $H$-module $F$-algebra. We say that $W$ satisfies
a Capelli identity $m$ if every $H$-polynomial $f(x_{1},...,x_{m},Y)$
which is alternating in $x_{1},...,x_{m}$ is in $id^{H}(W)$.
\end{defn}
The following definition of $H$-Kemer index and $H$-Kemer polynomials
makes sense only for $H$-module algebras satisfying a Capelli identity.
As we saw previously, this includes the affine $H$-module algebras
which satify an ordinary PI. 
\begin{defn}
Suppose $\Gamma$ satisfies some Capelli identity. Define $\alpha(\Gamma)$
to be the maximal integer such that for every $\mu$ there is a multilinear
polynomial $f=f(X_{1},...,X_{\mu},Y)\notin\Gamma$ which is alternating
with respect to the sets $X_{1},...X_{\mu}$ which are all of cardinality
$\alpha(\Gamma)$. 

$s(\Gamma)$ is defined as the maximal integer such that for every
$\nu$ there is a multilinear $g=g(X_{1},...,X_{\mu},X_{1}^{\prime},...,X_{s(\Gamma)}^{\prime},Y)\notin\Gamma$
which is alternating with respect to $X_{1},...,X_{\mu},X_{1}^{\prime},...,X_{s(\Gamma)}^{\prime}$,
where $|X_{1}|=\cdots=|X_{\mu}|=\alpha(\Gamma)$ and $|X_{1}^{\prime}|=\cdots=|X_{s(\Gamma)}^{\prime}|=\alpha(\Gamma)+1$.

We call the pair $\left(\alpha(\Gamma),s(\Gamma))\right)$ the $H$-Kemer
\emph{index }of $\Gamma$ and denote it by $\mbox{Ind}(\Gamma)$.
Any such $g$ is called $H$-Kemer polynomial of $\Gamma$ of \emph{rank}
$\mu$. We refer to $X_{1},...,X_{\mu}$ as \emph{small sets} and
to $X_{1}^{\prime},...,X_{s(\Gamma)}^{\prime}$ as \emph{big sets}.\end{defn}
\begin{rem}
\label{rem:comparison-1}If $\Gamma_{1}\subseteq\Gamma_{2}$ then
$\mbox{Ind}(\Gamma_{1})\geq\mbox{Ind}(\Gamma_{2})$ i.e. the order
is reversed. 
\end{rem}

\begin{rem}
In what follows we will always assume that $\mu\geq\mu_{\Gamma}$
where $\mu_{\Gamma}$ is the minimal integer for which any multilinear
$f=f(X_{1},...,X_{\mu_{\Gamma}},X_{1}^{\prime},...,X_{s(\Gamma)+1}^{\prime},Y)\in F^{H}\left\{ X\right\} $,
which alternates on $X_{1},...,X_{\mu_{\Gamma}},X_{1}^{\prime},...,X_{s(\Gamma)}^{\prime}$,
and $|X_{1}|=\cdots=|X_{\mu_{\Gamma}}|=\alpha(\Gamma)$, $|X_{1}^{\prime}|=\cdots=|X_{s(\Gamma)+1}^{\prime}|=\alpha(\Gamma)+1$,
is an identity of $\Gamma$.
\end{rem}

\section{The index of finite dimensional algebras\label{sec:The_index_of_finite_dimensional_algebra}}

We start this section with the definition of the Phoenix property.
\begin{defn}
(The Phoenix property) Let $\Gamma$ be an $H$-$T$-ideal as above.
Let $P$ be any property which may be satisfied by polynomials (e.g.
being $H$-Kemer). We say that $P$ is ``\textit{$\Gamma$-Phoenix}''
(or in short ``\textit{Phoenix}'') if given a multilinear polynomial
$f$ having $P$ which is not in $\Gamma$ and \textit{any} $f^{'}$
in $\langle f\rangle_{H}$\nomenclature[99]{$\langle f\rangle_{H}$ }{The $H-T$-ideal generated by the polynomial $f$}
(the $H$-$T$-ideal generated by $f$) which is not in $\Gamma$
as well, there exists a multilinear polynomial $f^{''}$ in $\langle f^{'}\rangle_{H}$
which is not in $\Gamma$ and satisfies $P$. We say that $P$ is
``\textit{strictly $\Gamma$-Phoenix}'' if any multilinear polynomial
$f^{'}\in\langle f\rangle_{H}$ which is not in $\Gamma$, satisfies
$P$.\end{defn}
\begin{rem}
Given a polynomial $g$, there exists a multilinear polynomial $f'$
such that $\langle f'\rangle_{H}=\langle g\rangle_{H}$. It follows
that in order to verify the Phoenix property it is sufficient to consider
multilinear polynomials $f'$ in $\langle f\rangle_{H}$.
\end{rem}
Let us pause for a moment and summarize what we have at this point.
We are given an $H$-$T$-ideal $\Gamma$ (the $T$-ideal of identities
of an affine $H$-module algebra $W$). We assume that $W$ is ordinery
\textit{PI} and hence as shown in \secref{Getting-started} there
exists a finite dimensional $H$-module algebra $A$ with $\Gamma\supseteq id^{H}(A)$.
To the $H$-$T$-ideal $\Gamma$ we attach the corresponding $H$-Kemer
index in $\mathbb{Z}^{\geq0}\times\mathbb{Z}^{\geq0}$. Similarly,
we may consider the Kemer index of $id^{H}(A)$ which by abuse of
notation we denote it by $\mbox{Ind}(A)$. Clearly, we have $\mbox{Ind}(\Gamma)\leq\mbox{Ind}(A)$.

One of our main goals (in the first part of the proof) is to replace
the $H$-module algebra $A$ by an $H$-module algebra $A^{'}$ with
a larger $T$-ideal such that
\begin{enumerate}
\item $\Gamma\supseteq id^{H}\left(A^{'}\right)$
\item $\Gamma$ and $iid^{H}\left(A^{'}\right)$ have the same $H$-Kemer
index.
\item $\Gamma$ and $id^{H}\left(A^{'}\right)$ have the ``same'' $H$-Kemer
polynomials.\end{enumerate}
\begin{rem}
The terminology ``the same $H$-Kemer polynomials'' needs a clarification.
If $\Gamma_{1}\supseteq\Gamma_{2}$ are $H-T$ ideals with $\mbox{Ind}(\Gamma_{1})=\mbox{Ind}(\Gamma_{2})$.
We say that $\Gamma_{1}$ and $\Gamma_{2}$ have the same $H$-Kemer
polynomials if there exists an integer $\mu$ such that all Kemer
polynomials of $\Gamma_{2}$ with at least $\mu$ alternating small
sets are not in $\Gamma_{1}$. Write $\mu_{\Gamma,\Gamma^{\prime}}$
for the maximum between the above $\mu$, $\mu_{\Gamma}$ and $\mu_{\Gamma^{\prime}}$.
\begin{rem}
Statements $(1)-(3)$ above will establish the important connection
between the combinatorics of the $H$-Kemer polynomials of $\Gamma$
and the structure of finite dimensional $H$-module algebras. The
``Phoenix'' property for the $H$-Kemer polynomials of $\Gamma$
will follow from that connection. 
\end{rem}
\end{rem}
Let $A$ be a finite dimensional $H$-module algebra over $F$ and
let $J(A)$ be its Jacobson radical. We know (\cite{Linchenko2005})
that $J(A)$\nomenclature[991]{$J(A)$}{The Jacobson radical of $A$}
is $H$-invariant, thus $\overline{A}=A/J(A)$\nomenclature[992]{$\overline{A}$}{The semisimple part of $A$}
is a semisimple $H$-module algebra. Moreover by the $H$-invariant
Wedderburn-Malcev Principal Theorem (see \cite{cStefan1999}) there
exists a semisimple $H$-module subalgebra $\overline{A}$ of $A$
such that $A=\overline{A}\oplus J(A)$ as vector spaces. In addition,
the subalgebra $\overline{A}$ may be decomposed as an algebra into
the direct product of $H$-simple algebras $\overline{A}\cong A_{1}\times A_{2}\times\cdots\times A_{q}$
(see {[}\cite{Giambruno2005}, Lemma 3{]}).
\begin{rem}
This decomposition enables us to consider ``semisimple'' and ``radical''
substitutions. More precisely, since in order to check whether a given
multilinear $H$-polynomial is an identity of $A$ it is sufficient
to evaluate the variables on any (given) spanning set, we may take
a basis consisting of elements of $\overline{A}\cup J(A)$. We refer
to such evaluations as semisimple or radical evaluations respectively.
Moreover, the semisimple substitutions may be taken from the simple
components.

\textit{In what follows, whenever we evaluate a polynomial on a finite
dimensional $H$-module algebra, we consider only evaluations of that
kind.} \bigskip{}

\end{rem}
For any finite dimensional $H$-module algebra $A$ over $F$ we let
$d(A)$ be the dimension of the semisimple subalgebra and $n_{A}$
the nilpotency index of $J(A)$. We denote by $\mbox{Par}(A)=(d(A),n_{A}-1)$
the \emph{parameter} of the $H$-module algebra $A$.
\begin{prop}
\label{prop:index-par_inequality}Let $(\alpha,s)$ be the index of
$A$. Then $(\alpha,s)\leq(d(A),n_{A}-1)$.\end{prop}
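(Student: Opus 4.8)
The plan is to bound the two coordinates $\alpha(A)$ and $s(A)$ of the $H$-Kemer index separately, using the structure $A = \overline{A} \oplus J(A)$ with $\overline{A} \cong A_1 \times \cdots \times A_q$ and $J(A)^{n_A} = 0$, and the fact (noted in the Remark following the Wedderburn--Malcev statement) that when checking whether a multilinear polynomial is an identity it suffices to evaluate the variables on a basis consisting of elements of $\overline{A} \cup J(A)$, with semisimple substitutions drawn from the individual simple components. The key mechanism is a counting/pigeonhole argument on such evaluations.

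\medskip

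First I would prove $\alpha(A) \le d(A)$. Fix a multilinear $H$-polynomial $f = f(X_1,\dots,X_\mu,Y) \notin id^H(A)$ alternating on each $X_i$ with $|X_i| = \alpha(A)$, taking $\mu$ large. Consider a non-vanishing evaluation on basis elements of $\overline{A} \cup J(A)$. Since $J(A)^{n_A} = 0$, at most $n_A - 1$ of all the variables in the whole polynomial can be evaluated on radical elements without killing the value; in particular, for $\mu$ large enough, at least one small set $X_i$ must be evaluated entirely on $\overline{A}$. Because $f$ alternates on the $\alpha(A)$ variables of $X_i$ and $\overline{A}$ has dimension $d(A)$ over $F$, if $\alpha(A) > d(A)$ then by pigeonhole two of these variables would receive the same basis value, forcing the value to vanish by alternation. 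Hence $\alpha(A) \le d(A)$. (One subtlety: the semisimple substitutions are to be taken from the simple components $A_j$, so I would phrase the alternation/pigeonhole argument over the whole semisimple basis $\bigcup_j A_j$, whose total dimension is $d(A)$.)

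\medskip

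Next I would prove that if $\alpha(A) = d(A)$ then $s(A) \le n_A - 1$, which together with the previous paragraph gives $(\alpha(A), s(A)) \le (d(A), n_A - 1)$ in the lexicographic order. Take an $H$-Kemer polynomial $g = g(X_1,\dots,X_\mu,X_1',\dots,X_{s(A)}',Y) \notin id^H(A)$ with the big sets $X_k'$ of size $\alpha(A)+1 = d(A)+1$ and $\mu$ large, and fix a non-vanishing evaluation on $\overline{A} \cup J(A)$. Each big set $X_k'$ has $d(A)+1$ variables alternating among themselves; since $\dim_F \overline{A} = d(A)$, not all of them can be semisimple, so each big set must contain at least one variable evaluated in $J(A)$. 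These $s(A)$ radical variables (one per big set) are distinct, so $J(A)^{s(A)} \ne 0$ is forced by non-vanishing, giving $s(A) \le n_A - 1$. The remaining case, $\alpha(A) < d(A)$, makes $(\alpha(A),s(A)) \le (d(A), n_A-1)$ immediate regardless of $s(A)$, so I would dispose of it first.

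\medskip

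The main obstacle I anticipate is making the "one radical variable per big set forces a long nonzero product in $J(A)$" step fully rigorous: the radical variables coming from different big sets appear at various positions interspersed with semisimple and $Y$-variables throughout a monomial, so I need to argue that in a monomial surviving the evaluation these radical substitutions multiply (possibly with semisimple factors between them, which land in $A$ and do not reduce the radical-power count) to give a nonzero element of $J(A)^{s(A)}$ — using that $J(A)$ is an ideal so $A \cdot J(A)^{t} \cdot A \subseteq J(A)^{t}$ is false but $J(A)^a \cdot A \cdot J(A)^b \subseteq J(A)^{a+b}$ does hold. I would also need to be careful that "at most $n_A-1$ radical variables overall" interacts correctly with the rank $\mu$ being allowed to grow, and that the $H$-action does not interfere: since the $H$-action preserves $\overline{A}$ and $J(A)$ (by $H$-invariance of the radical), applying $h$ to a basis element of $\overline{A}$ stays in $\overline{A}$ and similarly for $J(A)$, so the counting is unaffected by the superscripts $x^h$.
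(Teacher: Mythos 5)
Your proposal is correct and uses essentially the same idea as the paper: bound $\alpha(A)$ by observing that alternating sets of size exceeding $d(A)$ force a radical substitution (or, in your phrasing, a large rank $\mu$ forces an all-semisimple small set, killed by pigeonhole), and bound $s(A)$ by observing that each big set of size $d(A)+1$ forces a radical substitution, so $J(A)^{n_A}=0$ caps the number of big sets at $n_A-1$. The subtleties you flag (interspersed radical entries, $J(A)^a \cdot A \cdot J(A)^b \subseteq J(A)^{a+b}$, $H$-invariance of $J(A)$ and $\overline{A}$) all resolve in the way you anticipate and are implicit in the paper's shorter argument.
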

\begin{proof}
By the definition of the parameter $\alpha$, there exist nonidentity
polynomials with arbitrary large number of alternating sets of cardinality
$\alpha$. Now, if $\alpha>d(A)$ any such alternating set must have
at least one radical evaluation and hence the polynomial cannot have
more than $(n_{A}-1)$ alternating sets of cardinality $\alpha$.
Contradiction. This shows $\alpha\leq d(A)$. In order to complete
the proof of the proposition we need to see that if $\alpha=d(A)$
then $s<n_{A}$. To this end, recall that $s$ is the maximal number
of alternating sets of cardinality $\alpha+1$ in nonidentities (in
addition to arbitrary many alternating sets of cardinality $\alpha$).
But if $\alpha=d(A)$, then alternating sets of cardinality $\alpha+1$
must contain at least one radical evaluation on any nonzero evaluation
of its variables and hence, as above, the polynomial cannot contain
more than $(n_{A}-1)$ alternating sets of cardinality $\alpha+1$.
This proves the proposition.
\end{proof}
In order to establish a precise relation between the index of a finite
dimensional $H$-module algebra $A$ and its structure we need to
find appropriate finite dimensional $H$-module algebras which will
serve as a \textbf{minimal model} for a given $H$-Kemer index. Here
is the precise definition.
\begin{defn}
A finite dimensional $H$-module algebra $A$ is said to be \textit{$H$-PI}-\textit{basic}
(or just \textit{$H$-basic}) if there are no finite dimensional $H$-module
algebras $B_{1},...,B_{s}$ such that $\mbox{Par}(B_{i})<\mbox{Par}(A)$
and $A$ is $H$-PI equivalent to $B_{1}\times\cdots\times B_{s}$.\end{defn}
\begin{rem}
\label{rem:basic_is_everywhere}By induction on $\mbox{Par}(A)$ it
is easy to see that every finite dimensional $H$-module algebra is
$H$-PI equivalent to a finite product of $H$-basic algebras.
\end{rem}
We need to understand what ``PI properties'' does $H$-basic algebras
posses. 
\begin{defn}
We say that a finite dimensional $H$-module algebra $A$ is \textit{full}
with respect to a multilinear $H$-polynomial $f$, if exist a nonvanishing
evaluation of $f$ on $A$ such that every $H$-simple component is
represented (among the semisimple substitutions). A finite dimensional
$H$-module algebra $A$ is said to be full if it is full with respect
to some multilinear $H$-polynomial $f$. \end{defn}
\begin{lem}
\label{lem:full}Let $A$ be a finite dimensional $H$-module algebra
which is not full. Then $A$ is not $H$-basic.\end{lem}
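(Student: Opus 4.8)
The plan is to show that if $A$ is not full, then it decomposes (up to $H$-PI equivalence) into a product of algebras each of which has strictly smaller parameter, which by definition means $A$ is not $H$-basic. First I would set up notation: write $\overline{A}\cong A_1\times\cdots\times A_q$ for the decomposition of the semisimple part into $H$-simple components and $J=J(A)$ for the radical, so $A=\overline{A}\oplus J$ as vector spaces with $A=\bigoplus_{I\subseteq\{1,\dots,q\}}$ pieces indexed by subsets of components that "survive." For each proper subset $I\subsetneq\{1,\dots,q\}$ of simple components, let $A_I$ be the subalgebra $\bigl(\bigoplus_{i\in I}A_i\bigr)\oplus J$; since $J$ is $H$-invariant and each $A_i$ is $H$-invariant, each $A_I$ is an $H$-module subalgebra of $A$. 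The key claim is then $A\sim_{H\text{-PI}}\prod_{I}A_I$ where the product runs over all maximal proper subsets $I=\{1,\dots,q\}\setminus\{j\}$, and that $\mbox{Par}(A_I)<\mbox{Par}(A)$ for each such $I$.

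The parameter inequality is the easy half: $d(A_I)=\sum_{i\in I}\dim_F A_i<d(A)$ strictly because we have dropped at least one simple component (here I use that $q\geq 1$ and that dropping a component genuinely lowers the dimension of the semisimple part — the degenerate case $q=0$, i.e. $A$ nilpotent, must be handled separately but there the statement is trivial or vacuous since such an $A$ is automatically not full only if, and one checks the parameter directly). Since $d(A_I)<d(A)$, regardless of the nilpotency index of $J(A_I)$ (which is at most $n_A$), we get $\mbox{Par}(A_I)=(d(A_I),n_{A_I}-1)<(d(A),n_A-1)=\mbox{Par}(A)$ in the lexicographic order. So all that remains is the $H$-PI equivalence.

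One containment, $id^H(A)\subseteq id^H\bigl(\prod_I A_I\bigr)=\bigcap_I id^H(A_I)$, is immediate since each $A_I$ is an $H$-module subalgebra of $A$. For the reverse, suppose $f=f(x_1,\dots,x_n)$ is a multilinear $H$-polynomial that is not an identity of $A$; I must produce some $I$ with $f\notin id^H(A_I)$. Take a non-vanishing evaluation of $f$ on $A$ using only semisimple-or-radical substitutions (legitimate by the remark preceding \propref{index-par_inequality}). Because $A$ is \emph{not} full, no non-vanishing evaluation of any multilinear polynomial can involve semisimple elements from all $q$ simple components simultaneously; hence this particular evaluation misses some component $A_j$, i.e. every semisimple substitution comes from $\bigcup_{i\neq j}A_i$ and every radical substitution from $J$. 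But then the entire evaluation takes place inside the subalgebra $A_{\{1,\dots,q\}\setminus\{j\}}$, witnessing $f\notin id^H(A_{\{1,\dots,q\}\setminus\{j\}})$. This proves $\bigcap_I id^H(A_I)\subseteq id^H(A)$ and hence the equivalence.

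The main obstacle — really the only subtle point — is justifying that it suffices to test identities on the "semisimple-or-radical" evaluations and that "not full" is exactly the statement that no such evaluation hits every component; this is precisely the content of the remark following the Wedderburn–Malcev decomposition together with the definition of \emph{full}, so the argument goes through once those are invoked carefully. A secondary point requiring a line of care is the bookkeeping when $q\leq 1$ (a single $H$-simple component plus radical, or $A$ radical): if $q=1$ then $A$ full-with-respect-to any non-identity is automatic, so "not full" forces $id^H(A)$ to contain all multilinear polynomials, i.e. $A$ is $H$-PI trivial, and then $A\sim_{H\text{-PI}}0$, a trivial product with smaller parameter; one should note this edge case explicitly to keep the induction of \remref{basic_is_everywhere} well-founded.
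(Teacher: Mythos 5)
Your argument is correct and is essentially the same as the paper's: you build the subalgebras obtained by dropping one $H$-simple component (the paper's $B_i$, your $A_{\{1,\dots,q\}\setminus\{j\}}$), observe they have strictly smaller parameter since the radical and its nilpotency index are unchanged while the semisimple dimension drops, and show $A$ is $H$-PI equivalent to their product because any nonvanishing evaluation of a multilinear nonidentity must avoid some simple component. Your discussion of the $q\leq 1$ edge case is slightly imprecise (it is not true that $A$ is full with respect to \emph{every} nonidentity when $q=1$; rather $A$ is full with respect to the single-variable polynomial $x$, which is all that is needed), but this does not affect the substance of the proof.
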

\begin{proof}
Since any $H$-module algebra with one $H$-simple component is full
we may assume that $q>1$. Consider the decompositions mentioned above
$A\cong\overline{A}\oplus J$ and $\overline{A}\cong A_{1}\times A_{2}\times\cdots\times A_{q}$
($A_{i}$ are $H$-simple algebras). Construct the $H$-module subalgebras
\[
B_{i}=\left(A_{1}\times\cdots\times A_{i-1}\times A_{i+1}\times\cdots\times A_{q}\right)\oplus J=\pi^{-1}\left(A_{1}\times\cdots\times A_{i-1}\times A_{i+1}\times\cdots\times A_{q}\right),
\]
where $\pi:A\to\bar{A}$ is the natural projection. 

We claim that the algebras $A$ and $\widetilde{A}=B_{1}\times\cdots\times B_{q}$
are \textit{$H$-PI}-equivalent: Of course $id^{H}(A)\subseteq id^{H}(\widetilde{A})$,
so it suffices to prove that any $H$-nonidentity $f$ of $A$ is
also a nonidentity of $\widetilde{A}$. Clearly, we may assume that
$f$ is multilinear (say of degree $n$). Consider a non zero evaluation
$\bar{x}_{1},...,\bar{x}_{n}$ of $f$ on $A$. By assumption, there
is some $i$ such that $\bar{x}_{1},...,\bar{x}_{n}\notin A_{i}$
so $\bar{x}_{1},...,\bar{x}_{n}\in B_{i}$. Hence $f$ is non zero
on $\widetilde{A}$. Since for every $i$ $\mbox{Par}(B_{i})<\mbox{Par}(A)$
we are done.\end{proof}
\begin{prop}
\label{prop:unique_point}Let $A$ be a finite dimensional $H$-module
algebra which is full. Let $\mbox{Ind}(A)=(\alpha,s)$ and $\mbox{Par}(A)=(d(A),n_{A}-1)$.
Then $\alpha=d(A)$.
\end{prop}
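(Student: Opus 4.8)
The plan is to prove the reverse inequality $d(A)\le\alpha$, which together with \propref{index-par_inequality} (giving $\alpha\le d(A)$) yields $\alpha=d(A)$. The hypothesis that $A$ is full is exactly what should power this: fullness provides a multilinear nonidentity $f=f(x_1,\dots,x_n)$ together with a nonvanishing evaluation in which every one of the $q$ $H$-simple components $A_1,\dots,A_q$ of $\bar A$ is hit by at least one semisimple substitution. The idea is to manufacture, from $f$ and this good evaluation, multilinear nonidentities of $A$ carrying arbitrarily many disjoint alternating sets each of cardinality $d(A)=\dim_F\bar A$; by definition of $\alpha(\mathrm{id}^H(A))$ this forces $\alpha\ge d(A)$.

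First I would fix a good evaluation of $f$ and, for each simple component $A_i$, enlarge the substitution set so that an $F$-basis $e^{(i)}_1,\dots,e^{(i)}_{d_i}$ of $A_i$ appears among the semisimple substitutions, where $d_i=\dim_F A_i$ and $\sum_i d_i=d(A)$; multilinearity and polarization let me replace the original evaluation by a sum of evaluations of a polynomial $f'$, still a multilinear nonidentity, in which the semisimple variables run precisely over such bases. The key construction is then: take $N$ fresh disjoint sets of variables $X_1,\dots,X_N$, each of size $d(A)$, and form $\mathrm{Alt}_{X_1}\cdots\mathrm{Alt}_{X_N}$ applied to a polynomial built by ``stacking'' $N$ layered copies of the semisimple skeleton of $f'$ — i.e. inserting into the slots of $f'$, for each layer, the union $\bigcup_i\{e^{(i)}_1,\dots,e^{(i)}_{d_i}\}$ as the variables of one alternating set of size $d(A)$. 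One must check this does not vanish on $A$: because each $A_i$ is a simple algebra and the Wedderburn components are orthogonal, alternating a full basis of $\bar A$ inside a monomial that already witnesses a nonzero product on the good evaluation again produces a nonzero element (up to a nonzero scalar coming from the signed sum), using that an alternating multilinear expression in a full basis of a $d_i$-dimensional simple algebra is, after evaluation, a nonzero multiple of a fixed product of basis elements. Standard in the ordinary and graded settings, this needs here the $H$-invariant Wedderburn--Malcev decomposition already cited, plus the fact that the radical variables of $f'$ can be kept fixed across layers without forcing vanishing (their total degree stays bounded by $n_A-1$ within each layer, which is harmless since we are only alternating the semisimple slots).

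The main obstacle I anticipate is precisely this non-vanishing verification: one must arrange the stacking so that the layers do not ``interfere'' — concretely, that alternating a full basis of each $A_i$ across $N$ layers, while the monomial structure of $f'$ forces these basis elements to be multiplied together in a prescribed order, still leaves a surviving term in the signed sum. The clean way is to pass to a field extension $L\supseteq F$ over which each $H$-simple $A_i$ becomes (a matrix algebra, or at least) a product of matrix algebras, choose matrix-unit bases, and observe that for a full matrix algebra $M_{d_i}(L)$ there is a standard alternating polynomial (a Capelli-type or standard identity evaluation) giving a nonzero value on the matrix units; extending scalars does not change whether an $H$-polynomial is an identity, so a nonidentity over $L$ is a nonidentity over $F$. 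Once the single-layer non-vanishing over $L$ is in hand, iterating to $N$ layers is a routine induction on $N$, and letting $N\to\infty$ completes the argument that $\alpha\ge d(A)$, hence $\alpha=d(A)$.
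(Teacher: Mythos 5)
Your reduction is right: combined with \propref{index-par_inequality} (which gives $\alpha\le d(A)$), it suffices to manufacture, from a full evaluation of a nonidentity of $A$, multilinear $H$-nonidentities carrying arbitrarily many disjoint alternating sets of cardinality $d(A)=\dim_F\overline{A}$. That is exactly Kemer's Lemma $1$ in this framework (\lemref{full-folds}), and the paper does \emph{not} reprove it: it cites Lemma~10 of \cite{Gordienko2013}.

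The gap in your proposal is the ``single-layer non-vanishing'' step. You propose to extend scalars so that each $H$-simple component $A_i$ becomes a matrix algebra (or a product of matrix algebras) and then invoke an \emph{ordinary} Capelli-type alternating polynomial evaluated on matrix units. This is the route taken in the ungraded and group-graded theories, and the paper explicitly flags in \secref{Sketch-of-the} that it breaks down here: with no classification of $H$-simple algebras one cannot reduce to the matrix-algebra picture, and this is precisely why Gordienko's intrinsic construction is invoked instead.

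Concretely, an $H$-simple $A_i$ is semisimple as an ordinary algebra (its radical is an $H$-ideal, hence $0$) but need not be simple; after extending scalars, $A_i\otimes_F L$ typically splits into several orthogonal matrix factors $B_1\times\cdots\times B_r$. Alternating an ordinary polynomial over a basis of $A_i\otimes_F L$ then forces products of elements from different $B_j$'s, which vanish, and since $A_i$ has no radical you cannot insert bridging radical values between the $B_j$'s as you do between distinct $A_i$'s. A nonvanishing alternation over a full basis of $A_i$ must therefore be an $H$-polynomial that genuinely exploits the Hopf action on the variables. Already for $A_i=F[\mathbb{Z}/2]$ with its natural $\mathbb{Z}/2$-grading (so $A_i\cong F\times F$ as an ordinary algebra), the ordinary alternating polynomial $x_1x_2-x_2x_1$ vanishes identically on $A_i$, while the graded polynomial $x_1^{[0]}x_2^{[1]}-x_2^{[0]}x_1^{[1]}$ built from the homogeneous projectors does not. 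Your sketch never invokes the $H$-action at this step, so it cannot close the gap; Gordienko's Lemma~10 is exactly the result that supplies the required $H$-polynomial without any structure theory of $H$-simple algebras.
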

For the proof we need to show that for an arbitrary large integer
$\mu$ there exists a multilinear $H$-nonidentity $f$ that contains
$\mu$ folds of alternating sets of cardinality $\dim_{F}(\overline{A})$.
\begin{lem}[Kemer's Lemma $1$]
\label{lem:full-folds}Notation as above. Let $A$ be a finite $H$-module
dimensional algebra which is full. Then for any integer $\mu$ there
exists a polynomial $f$ in the $T$-ideal with the following properties: 
\begin{enumerate}
\item $f\notin id^{H}(A)$ 
\item $f$ has $\mu$-folds of alternating sets of cardinality $\dim_{F}(\overline{A})$.
\end{enumerate}
\end{lem}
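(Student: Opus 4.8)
The plan is to build the polynomial $f$ explicitly from a full, nonvanishing evaluation of some multilinear polynomial $f_0$ and then to amplify the single alternation hidden in that evaluation into $\mu$ folds by iterating the construction. Let me write out the steps.

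\textbf{Step 1: A full nonvanishing evaluation and the Wedderburn decomposition.} Since $A$ is full, fix a multilinear $H$-polynomial $f_0$ and an evaluation on $A$ (using only semisimple and radical substitutions, as agreed) which is nonzero and in which every $H$-simple component $A_1,\dots,A_q$ of $\overline{A}$ appears among the semisimple substitutions. Write $\overline{A}$ as the direct product $A_1\times\cdots\times A_q$ and note $\dim_F(\overline{A}) = d(A) = \sum_i \dim_F(A_i)$. The idea is that along this one fixed nonzero monomial of $f_0$ we can read off, in each simple block $A_i$, a full matrix-unit-like structure, so we may insert a set of $\dim_F(A_i)$ new variables specialized to a basis of $A_i$ and alternate them without killing the evaluation; doing this simultaneously in all blocks produces one alternating set of cardinality $d(A)$.

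\textbf{Step 2: One fold.} Concretely, starting from the fixed nonzero evaluation, I would introduce a new set of variables $Z = \{z_1,\dots,z_{d(A)}\}$, place them into the chosen monomial at positions interleaved with the existing semisimple substitutions so that specializing $Z$ to a fixed basis of $\overline{A}$ (the union of bases of the $A_i$) recovers (a nonzero scalar multiple of) the original monomial, and then apply $\mathrm{Alt}_Z$. The key point, exactly as in the classical Kemer argument, is that in a simple (matrix) algebra a product $e_{a}\,x\,e_{b}$ picks out one matrix coefficient, so alternating a block-basis produces a permanent/determinant-type expression that does not vanish identically; fullness guarantees every block contributes so that the whole set of size $d(A)$ alternates nontrivially. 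Call the resulting polynomial $f_1$; it is a consequence of $f_0$, lies in $F^H\{X\}$, is not in $id^H(A)$, and has one alternating set of cardinality $d(A)$.

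\textbf{Step 3: Iteration to $\mu$ folds.} To get $\mu$ folds, repeat: take a nonzero evaluation of $f_1$ on $A$ and note it is still, after collecting terms, supported on a nonzero product of semisimple elements covering all blocks (here one must check that alternation did not destroy fullness — it does not, because we can choose the surviving monomial to again hit every block). Insert a second fresh set $Z^{(2)}$ of $d(A)$ variables and alternate, obtaining $f_2$ with two folds, still a nonidentity. Inductively this yields $f = f_\mu$ with $\mu$ folds of alternating sets of cardinality $d(A) = \dim_F(\overline{A})$, $f\notin id^H(A)$, and $f$ a consequence of $f_0$, hence in the $T$-ideal as required.

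\textbf{Main obstacle.} The delicate point — and where this proof differs from the classical one, as the author signals in Section~\ref{sec:Sketch-of-the} — is Step 2 in the $H$-equivariant setting: in the ordinary or group-graded case one uses the explicit matrix-unit description of the simple components, but for a general $H$-simple $A_i$ no such normal form is assumed. So the real work is to show that from an arbitrary nonzero "block-covering" evaluation one can still manufacture a genuinely alternating set of full dimension $\dim_F(A_i)$ in each block, $H$-equivariantly; I expect this to rest on a dimension/linear-independence argument in the relatively free algebra (following Gordienko \cite{Gordienko2013}) rather than on explicit idempotents, together with care that the alternation is taken over variables carrying all the needed $H$-actions. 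Verifying that fullness is preserved under each alternation step, so that the iteration in Step 3 goes through, is the other point requiring attention.
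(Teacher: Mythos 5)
The paper's own ``proof'' of this lemma is a one-line citation to Lemma~10 of Gordienko \cite{Gordienko2013}, so there is no argument in the text itself to compare against; the heavy lifting is delegated precisely because, as the introduction to Section~\ref{sec:The_index_of_finite_dimensional_algebra} points out, the $H$-module setting does not give access to an explicit Wedderburn/matrix-unit structure on the $H$-simple components.

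Your proposal correctly reconstructs the shape of the classical Kemer Lemma~1 argument (full evaluation, insert a fresh set of size $\dim_F(\overline{A})$ adjacent to the semisimple substitutions, alternate, iterate), and you correctly identify where it breaks: the entire content of the lemma lives in Step~2, and your Step~2 is not actually proved. The phrase ``alternating a block-basis produces a permanent/determinant-type expression that does not vanish identically'' is exactly the fact that is available for matrix algebras via matrix units and is \emph{not} available for a general $H$-simple component $A_i$, which need not even be simple as an algebra, let alone carry an $H$-stable matrix-unit frame. You acknowledge this and say you ``expect'' it follows from a Gordienko-style dimension/linear-independence argument, but that is the lemma, not a reduction of it; as written, the proposal defers the one nontrivial step to the very reference the paper cites, so it is an outline with an acknowledged gap rather than a proof. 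Separately, the iteration in Step~3 (build one fold, then argue fullness survives, then insert the next fold) is not the standard route and has its own subtlety: after alternating $Z^{(1)}$ the nonzero evaluation of $f_1$ is a signed sum over permutations, and ``choosing a surviving monomial that again hits every block'' into which $Z^{(2)}$ can be slotted is not automatic; the usual proofs insert all $\mu$ sets simultaneously around a single fixed nonzero evaluation of the original polynomial and alternate once, precisely to avoid having to re-derive a usable fullness statement at every stage. In sum: the strategy you sketch is the right one at high altitude, but the key step is missing and the iterative variant would need extra justification that the proposal does not supply.
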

\begin{proof}
See Lemma 10 in \cite{Gordienko2013}.
\end{proof}

\section{Kemer's Lemma $2$\label{Section: Kemer's_Lemma_=0000242=000024}}

In this section we prove Kemer's Lemma $2$. Before stating the precise
statement we need to extract an additional ``PI property'' from
$H$-basic algebras. This time we need a property which controls the
nilpotency index.

Let $f$ be a multilinear $H$-polynomial which is not in $id^{H}(A)$.
Clearly, any nonzero evaluation cannot have more than $n_{A}-1$ radical
evaluations.
\begin{lem}
Let $A$ be a finite dimensional $H$-module algebra. Let $\mbox{Ind}(A)=(\alpha,s)$
be its Kemer index. Then $s\leq n_{A}-1$.\end{lem}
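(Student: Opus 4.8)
The plan is to establish the inequality $s\leq n_A-1$ by a counting argument that parallels the proof of \propref{index-par_inequality}, but now tracking the big alternating sets of cardinality $\alpha+1$ rather than just invoking the dimension bound on the semisimple part. The key idea is that in a Kemer polynomial realizing the index $(\alpha,s)$ one has, in addition to arbitrarily many small alternating sets of size $\alpha$, exactly $s$ big alternating sets of size $\alpha+1$, and I want to show each big set forces at least one radical substitution among its variables in any nonzero evaluation.

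First I would take a multilinear $H$-polynomial $g=g(X_1,\dots,X_\mu,X_1',\dots,X_s',Y)\notin id^H(A)$ alternating on all the displayed sets, with $|X_i|=\alpha$ and $|X_j'|=\alpha+1$, and $\mu$ taken large (larger than $d(A)$, which is legitimate since $s(\Gamma)$ is defined via ``for every $\nu$''). Fix a nonzero evaluation using only semisimple substitutions (from the $H$-simple components) and radical substitutions, as permitted by the standing convention on evaluations of finite dimensional $H$-module algebras. Now I examine each big set $X_j'$: it has $\alpha+1$ variables, and because the polynomial is alternating on $X_j'$, the substituted elements must be linearly independent, hence cannot all lie in a single $H$-simple component in a way that keeps the evaluation nonzero once we recall that $\alpha\leq d(A)$ is already not quite enough — here is the subtle point.

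The main obstacle is precisely to rule out the possibility that a big set of size $\alpha+1$ is evaluated entirely on semisimple elements. If $\alpha=d(A)$ this is immediate, exactly as in \propref{index-par_inequality}: an alternating set of size $d(A)+1$ cannot be filled with linearly independent semisimple elements, so it must absorb a radical substitution. If $\alpha<d(A)$, one cannot argue this way directly; instead I would use the largeness of $\mu$. Among the $\mu$ small sets of size $\alpha$, a pigeonhole/Regev-type argument (using that the semisimple part has only $d(A)$ dimensions and finitely many $H$-simple components, together with the alternation forcing linear independence within each set) shows that for $\mu$ sufficiently large some small sets already consume all the available semisimple room in their components, and more carefully that the semisimple substitutions cannot simultaneously support $\mu$ small sets of size $\alpha$ and keep the big sets semisimple — so each big set again must receive a radical element. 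This is the heart of the matter and the step I expect to require the most care; it is essentially the content already packaged in \lemref{full-folds} (Kemer's Lemma 1) and its proof in \cite{Gordienko2013}, which I would invoke or adapt.

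Once each of the $s$ big sets is known to contribute at least one radical substitution, and since distinct alternating sets are disjoint, the nonzero evaluation of $g$ has at least $s$ radical substitutions. But a nonzero evaluation of a multilinear polynomial on $A$ can involve at most $n_A-1$ radical elements, since $J(A)^{n_A}=0$. Therefore $s\leq n_A-1$, as claimed. I would close by remarking that, combined with \propref{index-par_inequality} and \propref{unique_point}, this pins down $\mbox{Ind}(A)=\mbox{Par}(A)$ for $H$-basic (in particular full) algebras once the matching lower bound on $s$ is also established in the sequel.
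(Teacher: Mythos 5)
Your proof of the easy subcase $\alpha = d(A)$ is correct and is indeed the same dimension-count argument as \propref{index-par_inequality}. But the case $\alpha < d(A)$ — which you correctly flag as the heart of the matter — is where the argument has a genuine gap, and the ``pigeonhole/Regev'' sketch you offer cannot be made to work. The intuition that the $\mu$ small alternating sets ``consume the available semisimple room'' and thereby force radicals into the big sets is false in a multilinear setting: distinct alternating sets involve \emph{disjoint} sets of variables, and nothing prevents the same $\alpha$ semisimple basis vectors from being substituted into each small set, while the big sets independently receive $\alpha+1 \leq d(A)$ linearly independent semisimple elements of $A$. Alternation constrains linear independence \emph{within} a set, not across sets, so no amount of largeness of $\mu$ creates the tension you are hoping for. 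Kemer's Lemma~1 also does not supply this; it is a device for producing nonidentities with many small alternating sets, not for forcing radicals into big sets.

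What the paper actually does, and what your argument is missing, is the structural reduction to full algebras via \lemref{full}. One first replaces $A$ by an $H$-PI equivalent product $B_1\times\cdots\times B_q$ of full $H$-module subalgebras of $A$ (each with $J(B_i)\subseteq J(A)$, so $n_{B_i}\leq n_A$). Then Kemer's Lemma~1 gives $\alpha \geq d(B_i)$ for each $i$, and a counting argument (the one you do have) rules out $\alpha > \max_i d(B_i)$, so $\alpha = \max_i d(B_i)$. Now any nonvanishing evaluation of a Kemer polynomial of $A$ must concentrate on a single factor $B_i$ (else some multilinear monomial mixes components and vanishes), and necessarily one with $d(B_i)=\alpha$ (else the small sets already force radicals and cannot be arbitrarily many). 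In that $B_i$, a big set has cardinality $\alpha+1 > d(B_i)$, so it \emph{must} pick up a radical element of $J(B_i)\subseteq J(A)$; since the $s$ big sets are disjoint, $J(A)^{s}\neq 0$, i.e.\ $s\leq n_A-1$. Without this decomposition the inequality $\alpha+1 > d(B_i)$ that drives the radical count is unavailable when $\alpha<d(A)$, and your proof does not close.
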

\begin{proof}
$A$ is $H$-PI equivalent to the direct product of $H$-module algebras
$B_{1}\times\cdots\times B_{q}$, where $B_{i}$ is full for $i=1...q$.
For each $B_{i}$ we consider the dimension of the semisimple part
$d(B_{i})$. Applying Kemer lemma $1$ we have that $\alpha\geq\max_{i}(d(B_{i}))$.
On the other hand if $\alpha>d(B_{i})$, any multilinear polynomial
with more than $n_{B_{i}}-1$ alternating sets of cardinality $\alpha$
is in $id^{H}(B_{i})$ (any alternating set must have at least one
radical evaluation) and hence if $\alpha>\max_{i}(d(B_{i}))$, any
polynomial as above is an identity of $B_{1}\times\cdots\times B_{q}$
and hence of $A$. This contradicts the definition of the parameter
$\alpha$ and hence $\alpha=\max_{i}(d(B_{i}))$. Now take an alternating
set of cardinality $\alpha+1$. In every such set we must have a radical
evaluation or elements from different full algebras. If they come
from different full algebras we get zero. If we get a radical element
then we cannot pass $n_{A}-1$. 
\end{proof}
The next definition is key in the proof of Kemer's Lemma $2$ (see
below). 
\begin{defn}
Notation as above. Let $f$ be a multilinear polynomial which is not
in $id^{H}(A)$. We say that $A$ has property $K$ with respect to
$f$ if $f$ vanishes on any evaluation on $A$ with less than $n_{A}-1$
radical substitutions.

We say that a finite dimensional $H$-module algebra $A$ has property
$K$ if it satisfies the property with respect to some nonidentity
multilinear $H$-polynomial.\end{defn}
\begin{prop}
\label{prop:property_K}Let $A$ be $H$-basic algebra. Then it has
property $K$. Moreover there is a multilinear $H$-polynomial which
satisfies property $K$ and is full.
\end{prop}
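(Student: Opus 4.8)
The plan is to argue by contradiction from $H$-basicness, proving directly the ``moreover'' assertion (which contains the first one). Since $A$ is $H$-basic it is full by the contrapositive of \lemref{full}, and $\alpha(A)=d(A)$ by \propref{unique_point}. If $n_{A}\le 1$ or $d(A)=0$ the statement is immediate -- for instance $x_{1}\cdots x_{n_{A}-1}$ is then a full nonidentity of property $K$ -- so assume $n_{A}\ge 2$ and $d(A)\ge 1$. The power $J^{n_{A}-1}$ is a two-sided ideal which is $H$-invariant, since $J$ is and $h\cdot(r_{1}\cdots r_{n_{A}-1})=(h_{(1)}r_{1})\cdots(h_{(n_{A}-1)}r_{n_{A}-1})$ with each factor in $J$. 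Hence $A/J^{n_{A}-1}$ is again an $H$-module algebra; so are the Wedderburn--Malcev complement $\overline{A}$, the square-zero extension $\overline{A}\oplus J^{n_{A}-1}$, and the $H$-module subalgebras $\widehat{A}_{i}:=\pi^{-1}\bigl(\prod_{j\neq i}A_{j}\bigr)$ used in the proof of \lemref{full}. Their parameters are $(d(A),n_{A}-2)$, $(d(A),0)$, $(d(A),1)$ and $(d(A)-\dim_{F}A_{i},n_{A}-1)$, each strictly below $\mathrm{Par}(A)=(d(A),n_{A}-1)$ provided $n_{A}\ge 3$; when $n_{A}=2$ one simply omits $\overline{A}\oplus J^{n_{A}-1}$ (which is then all of $A$) and keeps $A/J=\overline{A}$ together with the $\widehat{A}_{i}$.

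Suppose, towards a contradiction, that no multilinear nonidentity of $A$ is simultaneously full and of property $K$. I claim that $A$ is then $H$-PI equivalent to the direct product of $A/J^{n_{A}-1}$, the $\widehat{A}_{i}$, and -- when $n_{A}\ge 3$ -- of $\overline{A}\oplus J^{n_{A}-1}$ as well; as all these have strictly smaller parameter, this contradicts $H$-basicness and proves the proposition. One inclusion is immediate, each factor being a subalgebra or a quotient of $A$. For the other, let $f$ be a multilinear nonidentity of $A$; by assumption $f$ is not full, or not of property $K$. If $f$ is not full, pick any nonzero evaluation $\mu$ of $f$: it omits some $H$-simple component $A_{i}$, hence takes values in $\widehat{A}_{i}$, so $f\notin id^{H}(\widehat{A}_{i})$. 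If $f$ is not of property $K$, pick a nonzero evaluation $\mu$ using at most $n_{A}-2$ radical substitutions: if $\mu$ omits a component, argue as before; otherwise $\mu$ represents all components, and then either $f(\mu)\notin J^{n_{A}-1}$ -- automatic if $\mu$ is purely semisimple, hence always if $n_{A}=2$ -- in which case $\mu$ already shows $f\notin id^{H}(A/J^{n_{A}-1})$, or $f(\mu)\in J^{n_{A}-1}\setminus\{0\}$, in which case one rereads the sum of monomial values $f(\mu)$ as a nonzero evaluation of $f$ on $\overline{A}\oplus J^{n_{A}-1}$, so that $f\notin id^{H}(\overline{A}\oplus J^{n_{A}-1})$.

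The genuine obstacle is this last sub-case: realising a value $f(\mu)$ that has fallen into the top radical layer $J^{n_{A}-1}$ -- even though $\mu$ uses only few radical substitutions -- as an honest nonzero evaluation on the square-zero extension $\overline{A}\oplus J^{n_{A}-1}$; in other words, ruling out that a nonidentity can ``hide'' in $J^{n_{A}-1}$ undetected by any strictly-smaller-parameter subquotient of $A$. I expect a clean treatment to peel the radical layers off one at a time -- enlarging the list of factors by the square-zero extensions $\overline{A}\oplus(J^{t-1}/J^{t})$, $2\le t\le n_{A}$ -- together with an induction on $n_{A}$ (and, at the bottom, the computation of $J(\overline{A}\oplus J^{n_{A}-1})$ and the verification that every factor is genuinely an $H$-module algebra of strictly smaller parameter). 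The remaining steps -- the two easy cases of the classification, the parameter arithmetic, and the $H$-invariance and closure bookkeeping -- are routine.
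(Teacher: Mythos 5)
There is a genuine gap, and to your credit you flag it yourself as ``the genuine obstacle,'' but the fix you sketch does not work, while the paper uses a different tool at precisely this spot.

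Your overall strategy---argue by contradiction with basicness by exhibiting a family of strictly-smaller-parameter $H$-module algebras that collectively detect every multilinear nonidentity of $A$---matches the paper's, and your treatment of the ``not full'' sub-case (use the $B_i=\pi^{-1}\bigl(\prod_{j\ne i}A_j\bigr)$ from \lemref{full}) and of the sub-case $f(\mu)\notin J^{n_A-1}$ (use the quotient $A/J^{n_A-1}$) is fine. The problem is the remaining sub-case: a nonzero evaluation $\mu$ with $v<n_A-1$ radical substitutions whose value $f(\mu)$ lands in $J^{n_A-1}$. You propose to ``reread'' $f(\mu)$ as a nonzero evaluation on the square-zero extension $\overline{A}\oplus J^{n_A-1}$, but the radical substitutions in $\mu$ lie in $J$, not in $J^{n_A-1}$; they simply do not belong to $\overline{A}\oplus J^{n_A-1}$, so $\mu$ does not induce an evaluation there, and the existence of some \emph{other} nonzero evaluation on that algebra is exactly what would need to be proved. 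The filtration variant---square-zero extensions $\overline{A}\oplus(J^{t-1}/J^t)$---has the same defect: the original radical values live in $J$, not in a single graded piece, and none of these algebras receives the substitution. Likewise the quotients $A/J^t$ with $t\le n_A-1$ all kill $f(\mu)\in J^{n_A-1}\subseteq J^t$, so they see nothing in this sub-case.

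The paper closes this gap with a construction that is \emph{not} a subquotient of $A$: the algebra $\widehat{A}_{n_A-1}=A'/(I_1+I_2^{n_A-1})$ from \propref{Control_on_nilpotency_index}, where $A'=\overline{A}\ast F^{H}\langle x_1,\dots,x_t\rangle$. Given the nonzero evaluation with $v<n_A-1$ radicals, one freezes the semisimple substitutions and replaces the radical ones by free generators of $A'$, obtaining $f'=f(x_1,\dots,x_v,\bar x_{v+1},\dots,\bar x_n)$. Since $v<n_A-1$, $f'$ avoids $I_2^{n_A-1}$; and if $f'$ were in $I_1+I_2^{n_A-1}$ then specializing $x_i\mapsto\bar x_i$ would force $f(\mu)=0$, contradiction. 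So $f'\ne 0$ in $\widehat{A}_{n_A-1}$, which has parameter $\le(d(A),n_A-2)<\mathrm{Par}(A)$. That free-product-and-relations algebra is the ingredient your proposal is missing, and I do not see how to get by without it (or something playing the same role) using only subquotients of $A$.
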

Before proving the proposition we introduce a construction which will
enable us to put some ``control'' on the nilpotency index of (the
radical of) finite dimensional $H$-module algebras which are $H$-PI
equivalent.

Let $B$ be any finite dimensional $H$-module algebra and let $B^{'}=\overline{B}\ast F^{H}\left\langle x_{1},\dots,x_{t}\right\rangle $
be the co-product of the Free $H$-module algebra on the generators
$\{x_{1},...,x_{t}\}$ with the algebra $\overline{B}$, the semisimple
component of $B$. We define an $H$ action in the following fashion
\[
h\cdot b_{1}f_{1}\cdots f_{k}b_{k+1}=h_{(1)}(b_{1})h_{(2)}(f_{1})\cdots h_{(2k)}(f_{k})h_{(2k+1)}(b_{k+1})
\]
where $b_{1},...,b_{k}\in\overline{B}$ and $f_{1},...,f_{k}\in F^{H}\left\langle x_{1},\dots,x_{t}\right\rangle $.
The number of variables we take is at least the dimension of $J(B)$.
Let $I_{1}$ be the $H$-ideal of $B^{'}$ generated by all evaluations
of polynomials of $id^{H}(B)$ on $B^{'}$ and let $I_{2}$ be the
$H$-ideal generated by all variables $x_{i}^{h}$, where $h\in H$.
Consider the $H$-module algebra $\widehat{B}_{u}=B^{'}/(I_{1}+I_{2}^{u})$.
\begin{prop}
\label{prop:Control_on_nilpotency_index}The following hold:
\begin{enumerate}
\item $id^{H}(\widehat{B}_{u})=id^{H}(B)$ whenever $u\geq n_{B}$ $($$n_{B}$
denotes the nilpotency index of $J(B)$$)$. In particular $\widehat{B}_{u}$
and $B$ have the same index.
\item $\widehat{B}_{u}$ is finite dimensional.
\item The nilpotency index of $J(\widehat{B}_{u})$ is $\leq u$.
\end{enumerate}
\end{prop}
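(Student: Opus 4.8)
The plan is to analyze the three claims about $\widehat{B}_u = B'/(I_1+I_2^u)$ in order, since each builds on the previous one.

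For part (1), the inclusion $id^H(B) \subseteq id^H(\widehat{B}_u)$ is essentially built into the definition: any $f \in id^H(B)$, when evaluated on $B'$, lands in $I_1$ by construction, hence vanishes in the quotient. For the reverse inclusion, I would exhibit a surjective $H$-homomorphism (or a family of $H$-homomorphisms whose images jointly detect non-identities) from $\widehat{B}_u$ onto $B$ — or rather onto algebras $H$-PI equivalent to $B$. The natural map sends $\overline{B} \hookrightarrow B' \to \widehat{B}_u$ to the semisimple part of $B$ and sends each generator $x_i$ to an element of $J(B)$; since there are at least $\dim_F J(B)$ generators, we can arrange that a suitable collection of such specializations detects every multilinear non-identity of $B$. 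The condition $u \ge n_B$ guarantees that $I_2^u$ maps to $J(B)^u = 0$, so these maps factor through $\widehat{B}_u$. Combining, a multilinear $f \notin id^H(B)$ has a non-vanishing evaluation on $B$ using one semisimple basis element or one radical basis element per variable, which we can pull back to a non-vanishing evaluation on $\widehat{B}_u$ via the appropriate specialization. Hence $id^H(\widehat{B}_u) \subseteq id^H(B)$, and the equality of Kemer indices follows since the index depends only on the $H$-$T$-ideal.

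For part (2), finite-dimensionality: the algebra $B' = \overline{B} * F^H\langle x_1,\dots,x_t\rangle$ is infinite dimensional, but we are quotienting by $I_1 + I_2^u$. Modulo $I_2^u$, every product of more than $u-1$ of the $x_i^h$ vanishes, so $\widehat{B}_u$ is spanned by $\overline{B}$-words interleaved with at most $u-1$ of the $x_i^h$. This is still infinite unless the $x_i^h$ themselves span a finite-dimensional space modulo $I_1$ — and here we use that $B$ is ordinary PI (being finite dimensional), so $id^H(B)$ contains enough relations (an ordinary PI together with the Capelli-type consequences) to force $\overline{B}*F^H\langle x\rangle / I_1$ to be finite dimensional. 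Concretely, $id(B)$ contains a Capelli identity, which when evaluated in $B'$ and thrown into $I_1$ bounds the length of reduced words; together with $I_2^u$ bounding the number of $x$'s and the finite basis of $H$ bounding the number of distinct $x_i^h$, one gets a finite spanning set. I would write this as: $\widehat{B}_u$ is a homomorphic image of a relatively free algebra of a PI variety in finitely many variables with bounded radical-degree, hence finite dimensional.

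For part (3), the nilpotency bound on $J(\widehat{B}_u)$: the image of $I_2$ in $\widehat{B}_u$, call it $\bar I_2$, satisfies $\bar I_2^{\,u} = 0$ by construction. It remains to see that $\bar I_2$ contains (in fact equals, up to the radical) the Jacobson radical. The quotient $\widehat{B}_u / \bar I_2 \cong \overline{B}*F^H\langle x\rangle/(I_1 + I_2)$ is the image of $\overline{B}$ (since all $x_i^h$ are killed), which is semisimple; hence $\bar I_2 \supseteq J(\widehat{B}_u)$, and therefore $J(\widehat{B}_u)^u \subseteq \bar I_2^{\,u} = 0$, giving nilpotency index $\le u$. \emph{The main obstacle} I anticipate is part (2) — verifying finite-dimensionality cleanly, since one must check that $I_1$ really does impose a bound on word length (this is where ordinary PI-ness of $B$, and not merely $H$-PI-ness, is used) and that the interaction with the coproduct structure and the $H$-action introduces no subtleties; the other two parts are then short formal consequences.
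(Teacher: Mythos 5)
Your treatment of parts (1) and (3) is essentially the paper's argument: for (1) the inclusion $id^H(B)\subseteq id^H(\widehat B_u)$ is by construction and the reverse is obtained from a surjection $\phi:\widehat B_u\to B$ sending $\overline B$ isomorphically onto the Wedderburn factor and the $x_i$ onto a spanning set of $J(B)$ (a single surjection suffices; one does not need a family), with $u\ge n_B$ guaranteeing $I_2^u\subseteq\ker\phi$; for (3) you correctly observe that $\widehat B_u/\bar I_2\cong\overline B$ is semisimple, so $J(\widehat B_u)\subseteq\bar I_2$ and hence $J(\widehat B_u)^u=0$.

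However, your argument for part (2) is wrong, and you flag the wrong ``main obstacle.'' You claim finite-dimensionality hinges on $I_1$ bounding word length via the ordinary PI of $B$, and that $\overline B * F^H\langle x_1,\dots,x_t\rangle/I_1$ becomes finite dimensional. Neither is true: take $H=F$, $\overline B=F$, $t=1$; then $B'=F[x]$, and since $F$ satisfies all commutators, $I_1=0$, so $B'/I_1=F[x]$ is infinite dimensional. Finite-dimensionality is forced \emph{entirely} by $I_2^u$ together with finiteness of $\dim_F\overline B$, $\dim_F H$, and the number of generators $t$. After collapsing consecutive letters from $\overline B$, every element of $B'$ is a linear combination of alternating words $b_1 z_{i_1}^{g_1} b_2 z_{i_2}^{g_2}\cdots z_{i_j}^{g_j} b_{j+1}$ with $b_k\in\overline B$, and modulo $I_2^u$ any such word with $j\ge u$ vanishes. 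Since the number of variables $z_{i}$ is $t$, the basis of $H$ has $\dim_F H$ elements, and each $b_k$ ranges over the finite-dimensional $\overline B$, there is a finite spanning set of $\widehat B_u$ even before touching $I_1$. The ideal $I_1$ plays no role in part (2); it is there to arrange the $H$-$T$-ideal equality in part (1). So your anticipated ``obstacle'' is a non-issue, while the mechanism you propose (ordinary PI plus Capelli via $I_1$) does not give the conclusion.
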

\begin{proof}
Note that by the definition of $\widehat{B}_{u}$ (modding $B^{'}$
by the ideal $I_{1}$), $id^{H}(\widehat{B}_{u})\supseteq id^{H}(B)$.
On the other hand there is a surjection $\phi:\widehat{B}_{u}\longrightarrow B$
which maps the variables $\{x_{i}\}$ onto a spanning set of $J(B)$
and $\overline{B}$ is mapped isomorphically. The ideal $I_{1}$ consist
of all evaluation of $id^{H}(B)$ on $B^{'}$ and hence is contained
in $ker(\phi)$. Also the ideal $I_{2}^{u}$ is contained in $ker(\phi)$
since $u\geq n_{B}$ and $\phi(x)\in J$. This shows (1).

To see (2) observe that any element in $\widehat{B}_{u}$ is represented
by a sum of elements the form $b_{1}z_{1}^{g_{1}}b_{2}z_{2}^{g_{2}}\cdots b_{j}z_{j}^{g}b_{j+1}$
where $j<u$, $b_{i}\in\overline{B}$, $z_{i}\in\{x_{1},...,x_{t}\}$
and $g_{i}$ is in a basis of $H$. In order to prove the 3rd statement,
note that $I_{2}$ generates a radical ideal $\widehat{B}_{u}$ and
since $B^{'}/I_{2}\cong\overline{B}$ we have that 
\[
\widehat{B}_{u}/I_{2}\cong B^{'}/(I_{1}+I_{2}^{u}+I_{2})=B^{'}/(I_{1}+I_{2})\cong(B^{'}/(I_{2}))/I_{1}=\overline{B}/I_{1}=\overline{B}
\]
 (the last equality follows from the fact that $\overline{B}\subseteq B$).
We therefore see that $I_{2}$ generates the radical in $\widehat{B}_{u}$,
and hence its nilpotency index is bounded by $u$ as claimed.
\begin{proof}
(of Proposition \ref{prop:property_K}) Let $B_{1},...,B_{q}$ be
the $H$-module algebras defined in \lemref{full} and consider the
$H$-module algebra $\widehat{A}_{u}=A^{'}/(I_{1}+I_{2}^{u})$ (from
the proposition above). It is clear that $id^{H}(\widehat{A}_{n_{A}-1}\times B_{1}\times\cdots\times B_{q})\supsetneq id^{H}(A)$.
We show that if the proposition is false (for $A$), then there is
an equality. Since $\mbox{Par}(B_{i}),\mbox{Par}(\widehat{A}_{n_{A}-1})<\mbox{Par}(A)$
we get a contradiction.

Take a multilinear polynomial $f=f(x_{1},...,x_{n})$ which is not
in $id^{H}(A)$ and consider a non zero evaluation $\bar{x}_{1},...,\bar{x}_{n}$
on $A$. Suppose that $\bar{x}_{1},...,\bar{x}_{v}\in J$ and the
rest are in $\overline{A}$. If $v<n_{A}$, then $f^{\prime}=f(x_{1},...,x_{s},\bar{x}_{v+1},...,\bar{x}_{n})\in B^{'}$
and is non zero in $\widehat{A}_{n_{A}-1}$, since otherwise $f^{\prime}=\sum_{i}g_{i}(x_{1},...,x_{v},Y)$,
where $g_{i}\in id^{H}(A)$ and $Y\subseteq\overline{A}$. So by substituting
$\bar{x}_{i}$ instead $x_{i}$, we will get that $f(\bar{x}_{1},...,\bar{x}_{n})=0$.
If not all the simple components of $A$ appear in $\bar{x}_{v+1},...,\bar{x}_{n}$
we get (see the proof of \lemref{full}) that $f$ is a non identity
of one of the $B_{i}$. By our assumption these are the only options.
Hence, in any case $f$ is a non identity of the product $\widehat{A}_{n_{A}-1}\times B_{1}\times\cdots\times B_{q}$
as claimed. 
\end{proof}
\end{proof}
In the next lemma we deal with properties which are preserved in $H$-$T$-ideals.
\begin{lem}
\label{lem:Phoenix}Let $A$ be $H$-basic. The following hold.
\begin{enumerate}
\item Let $f\notin id^{H}(A)$ be a multilinear polynomial and suppose $A$
is full with respect to nonzero evaluations of $f$ on $A$, that
is, in any nonzero evaluation of $f$ on $A$ we must have semisimple
values from all $H$-simple components. Then if $f'\in\langle f\rangle_{H}$
is multilinear ($\langle f\rangle_{H}=$ $H$-$T$-ideal generated
by $f$$)$ is a nonidentity of $A$ then it is full with respect
to any nonzero evaluation on $A$.
\item Let $f\notin id^{H}(A)$ be multilinear and suppose it is $\mu$-fold
alternating on disjoint sets of cardinality $d(A)=\dim_{F}(\overline{A})$.
If $f^{'}\in\langle f\rangle_{H}$ is a nonidentity of $A$, then
there exists a nonidentity $f^{''}\in\langle f^{'}\rangle_{H}$ of
$A$, which is multilinear and $\mu$-fold alternating on sets of
cardinality $d(A)$. In other words, the property of being $\mu$-fold
alternating on sets of cardinality $d(A)$ is $A$-Phoenix.
\item Property $K$ is strictly $A$-Phoenix.
\end{enumerate}
\end{lem}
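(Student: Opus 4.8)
The plan is to prove the three statements of \lemref{Phoenix} in order, exploiting at each stage the ``PI property'' of $H$-basic algebras that the earlier results provide — fullness (\lemref{full}, \propref{unique_point}) and property $K$ (\propref{property_K}). Throughout I will use two standard facts about $H$-$T$-ideals: (a) a nonidentity in $\langle f\rangle_H$ may always be taken multilinear, and (b) a multilinear element of $\langle f\rangle_H$ is, modulo $id^H(A)$, an $F$-linear combination of ``branches'' of the form $u_0\, (h\cdot f)(w_1,\dots,w_n)\, u_1$ where the $w_i$ are monomials in the remaining variables and $u_0,u_1$ are monomials (possibly empty); more precisely, any consequence of $f$ is obtained by repeatedly applying $H$-substitutions $x_i\mapsto$ (monomial), left/right multiplications, and the $H$-action. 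Hence to evaluate $f'$ nonzero on $A$ one must in particular evaluate some such branch nonzero on $A$, which forces a nonzero evaluation of $h\cdot f$, hence of $f$ itself (the $H$-action of an invertible-up-to-antipode element does not destroy nonvanishing — more carefully, if $h\cdot f$ evaluates nonzero then $f$ evaluates nonzero on the same algebra by applying the $H$-module structure, since $\varepsilon(h)\ne 0$ may be arranged or one argues on a spanning set).

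For part (1): suppose $f'\in\langle f\rangle_H$ is multilinear and $f'(\bar z_1,\dots,\bar z_m)\ne 0$ in $A$ for some semisimple/radical evaluation. Pick a branch $u_0\,(h\cdot f)(\bar w_1,\dots,\bar w_n)\,u_1$ that is nonzero under this evaluation; then $(h\cdot f)(\bar w_1,\dots,\bar w_n)\ne 0$, so $f$ itself has a nonzero evaluation on $A$ with the $\bar w_i$ built from the $\bar z_j$. The hypothesis on $f$ now says \emph{every} nonzero evaluation of $f$ on $A$ hits all $H$-simple components among its semisimple substitutions; hence the substitutions occurring in the $\bar w_i$ already involve all $q$ components, and a fortiori so does the evaluation $\bar z_1,\dots,\bar z_m$ of $f'$. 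That is exactly the assertion that $f'$ is full with respect to this (arbitrary) nonzero evaluation.

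For part (2): by part (1), since $A$ is $H$-basic it is full, so by \propref{unique_point} $\alpha(A)=d(A)$; and by Kemer's Lemma $1$ (\lemref{full-folds}), for each $\mu$ there is a nonidentity with $\mu$ folds of alternating sets of cardinality $d(A)$. The content here is to \emph{propagate} the $\mu$-fold alternating structure from $f$ to a consequence of $f'$. Given the nonidentity $f'\in\langle f\rangle_H$, apply $\operatorname{Alt}$ to appropriate disjoint sets of variables of $f'$ of size $d(A)$: the point is that $f'$, being a consequence of $f$ which is already $\mu$-fold alternating on sets of size $d(A)=\alpha(A)$, retains after suitable multilinear identifications and alternations a nonzero evaluation with the same folds, because an alternating set of size $>d(A)=\alpha(A)$ would be forced into $id^H(A)$ while sets of size exactly $d(A)$ survive (all semisimple, one per component). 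Concretely: choose the variables of $f'$ coming (through the branch structure) from the alternating slots of $f$, group them into $\mu$ disjoint sets of size $d(A)$, alternate, and verify via a semisimple evaluation (one element of each $H$-simple component, as in \lemref{full-folds}) that the result is nonzero; this produces the desired $f''\in\langle f'\rangle_H$.

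For part (3): let $f$ witness property $K$ for $A$, i.e. $f\notin id^H(A)$ and $f$ vanishes on every evaluation with fewer than $n_A-1$ radical substitutions; we must show any multilinear nonidentity $f'\in\langle f\rangle_H$ also has property $K$ — this is the strict Phoenix claim and is the main obstacle, since we need it for \emph{every} such $f'$, not merely for some consequence. Suppose $f'(\bar z_1,\dots,\bar z_m)\ne 0$ with fewer than $n_A-1$ radical substitutions among the $\bar z_j$. As in part (1), some branch $u_0\,(h\cdot f)(\bar w_1,\dots,\bar w_n)\,u_1$ is nonzero; each $\bar w_i$ is a monomial in the $\bar z_j$, so the number of radical substitutions \emph{feeding into} the arguments of $f$ is at most the number of radical $\bar z_j$'s, hence $<n_A-1$; but a radical substitution for a single variable $x_i$ of $f$ means $\bar w_i$ lies in $J(A)$, and since a product of semisimple elements stays semisimple while any factor in $J$ lands the product in $J$, the count of $\bar w_i\in J(A)$ is at most the count of radical $\bar z_j$'s, so $f$ receives fewer than $n_A-1$ radical substitutions — contradicting property $K$ for $f$. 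Hence $f'$ vanishes on all such evaluations, i.e. $f'$ has property $K$, so property $K$ is strictly $A$-Phoenix. The delicate bookkeeping — that passing to a branch and to monomials in the variables cannot \emph{decrease} the radical count below the threshold — is what makes part (3) the crux; everything else reduces to the fullness results already available.
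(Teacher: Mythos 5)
Your proof takes essentially the same route as the paper: reduce to the two elementary $T$-operations (outer multiplication and substitution of a monomial for a single variable), then track the relevant property — which simple components are visited (part 1), or how many radical substitutions feed into $f$ (part 3) — through those operations. Part 3, which you correctly identify as the crux, matches the paper's argument precisely: a radical factor anywhere in a monomial evaluation $\bar w_i$ places $\bar w_i$ in $J(A)$, so the radical count for $f$ is bounded above by that for $f'$, and property $K$ for $f$ forces vanishing.

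Two small stylistic points. In part 1, your aside about "invertible-up-to-antipode" and arranging $\varepsilon(h)\ne 0$ is unnecessary: if $\phi\colon F^H\langle X\rangle\to A$ is the $H$-homomorphism realizing the evaluation, then $\phi(h\cdot f)=h\cdot\phi(f)$, so $\phi(h\cdot f)\ne 0$ already forces $\phi(f)\ne 0$; there is nothing further to argue. In part 2, rather than trying to rebuild the $\mu$-fold alternating structure "by hand" from the branch decomposition of $f'$ (which is awkward when a monomial has been substituted into an alternating slot and the alternation is destroyed), the cleaner route — and the one the paper uses — is to observe that part 1 makes $A$ full with respect to $f'$, and then simply invoke Kemer's Lemma 1 applied to $f'$ to produce the required $f''\in\langle f'\rangle_H$. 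Your sketch is gesturing at the same idea, but routing it through Lemma \ref{lem:full-folds} directly avoids the imprecise "suitable multilinear identifications" step.
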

\begin{proof}
Suppose $f(x_{1},\ldots,x_{n})$ is a multilinear polynomial which
satisfies the condition in $1$. It is sufficient to show the condition
remains valid if $f'$ is multilinear and has the form (a) $f'=\sum_{i}p_{i}\cdot f\cdot q_{i}$
(b) $f'(z_{1},\ldots,z_{t},x_{2},\ldots,x_{n})=f(Z,x_{2},\ldots,x_{n})$
where $Z=z_{1}^{h_{1}}\cdots z_{t}^{h_{t}}$ is a multilinear monomial
consisting of variables disjoint to the variables of $f(x_{1},\ldots,x_{n})$.
If $f'=\sum_{i}p_{i}\cdot f\cdot q_{i}$ then any nonzero evaluation
of $f'$ arises from a nonzero evaluation of $f$ and so the claim
is clear in this case. Let $f'(z_{1},\ldots,z_{t},x_{2},\ldots,x_{n})=f(Z,x_{2},\ldots,x_{n})$
and suppose $x_{i}=\hat{x}_{i}$ and $z_{i}=\hat{z}_{i}$ is a non
vanishing evaluation of $f'$. If an $H$-simple component $A_{1}$
say, is not represented, then the same simple component is not represented
in the evaluation $x_{1}=\hat{z}_{1}^{h_{1}}\cdots\hat{z}_{t}^{h_{t}},x_{2}=\hat{x}_{2},\ldots,x_{n}=\hat{x}_{n}$
and hence $f$ vanishes. We see that $f'$ vanishes on any evaluation
which misses a simple component.

For the second part of the lemma note that if $f$ is multilinear
and has $\mu$-folds of alternating sets of cardinality $d(A)=\dim_{F}(\overline{A})$
then clearly it vanishes on any evaluation unless it visits in all
simple components and hence the result follows from the first part
of the Lemma and Kemer Lemma 1.

We now turn to the proof of the $3$rd part of the lemma. If $f'=\sum_{i}g_{i}fp_{i}$
then it is clear that if an evaluation of $f'$ has less than $n_{A}-1$
radical evaluations then with that evaluation $f$ has less than $n_{A}-1$
radical evaluations and hence vanishes. This implies the vanishing
of $f'$. If an evaluation of $f'(z_{1},\ldots,z_{t},x_{2},\ldots,x_{n})=f(Z,x_{2},\ldots,x_{n})$
has less than $n_{A}-1$ radicals, then this corresponds to an evaluation
of $f(x_{1},\ldots,x_{n})$ with less than $n_{A}-1$ radicals and
hence vanishes.
\end{proof}
We can now state and prove Kemer's lemma $2$.
\begin{lem}[Kemer's lemma $2$]
\label{lem:Kemer's_Lemma_2} Let $A$ be $H$-full algebra. Suppose
$\mbox{Par}(A)=(d=d(A),n_{A}-1)$. Then for any integer $\nu$ there
exists a multilinear nonidentity $f$ with $\mu$-alternating sets
of cardinality $d$ $($small sets$)$ and precisely $n_{A}-1$ alternating
sets of variables of cardinality $d+1$ $($big sets$)$.\end{lem}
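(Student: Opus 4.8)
The plan is to bootstrap from Kemer's Lemma $1$ (\lemref{full-folds}) and the property-$K$ polynomial furnished by \propref{property_K}. Recall what these give: for every $\mu$ there is a multilinear nonidentity of $A$ with $\mu$-folds of alternating small sets of cardinality $d=\dim_{F}(\overline{A})$; and there is a multilinear full nonidentity $g_{0}\notin id^{H}(A)$ with property $K$, so every nonzero evaluation of $g_{0}$ uses exactly $n_{A}-1$ radical substitutions and meets all $H$-simple components $A_{1},\dots,A_{q}$. We also record the upper bound that will make the word ``precisely'' meaningful: an alternating set of cardinality $d+1$ on which a polynomial is a nonidentity of $A$ must receive at least one radical value in every nonzero evaluation (as in \propref{index-par_inequality}, any $d+1$ semisimple values are linearly dependent in $\overline{A}$), so together with $J(A)^{n_{A}}=0$ a nonidentity of $A$ can carry \emph{at most} $n_{A}-1$ alternating sets of cardinality $d+1$; ``precisely $n_{A}-1$'' therefore means the maximal possible number.

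First I would produce a single multilinear nonidentity $f$ of $A$ that is simultaneously $\mu$-fold alternating on small sets of cardinality $d$ and has property $K$. The point is to enrich $g_{0}$ with small sets without leaving $\langle g_{0}\rangle_{H}$. Fix a nonzero evaluation of $g_{0}$; since $g_{0}$ is full it meets every $A_{j}$, so in the relevant monomial one may, for each $j$, substitute (a legal $H$-$T$-operation) a block of $\dim_{F}(A_{j})$ fresh variables adjacent to a variable evaluated inside $A_{j}$, and then apply $\mbox{Alt}$ to the union of the new variables, a set of cardinality $\sum_{j}\dim_{F}(A_{j})=d$. Evaluating the new variables on a basis of $\overline{A}$ compatible with the block structure, the only surviving permutations preserve each component and the alternated sum factors as a product of component-wise alternations, each a nonidentity of the simple algebra $A_{j}$ (by \lemref{full-folds} applied to $A_{j}$, a simple algebra does not satisfy the Capelli identity of rank equal to its dimension); hence the enriched polynomial is still a nonidentity. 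Being a consequence of $g_{0}$, it still has property $K$ by the third part of \lemref{Phoenix}; iterating $\mu$ times, multilinearizing, and using the first part of \lemref{Phoenix} for fullness, we obtain $f$, which moreover carries a small set of cardinality $d$ that, when unbroken, is a basis of $\overline{A}$.

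The core step is to convert the $n_{A}-1$ forced radical substitutions of $f$ into $n_{A}-1$ alternating big sets of cardinality $d+1$. Fix a nonzero evaluation of $f$: by property $K$ it uses exactly $n_{A}-1$ radical substitutions, at variables $v_{1},\dots,v_{n_{A}-1}$ with radical values $r_{1},\dots,r_{n_{A}-1}\in J(A)$, and one may arrange these to occur at variables outside the small sets and the semisimple values to span $\overline{A}$. For each $i$ I would ``thicken'' $v_{i}$: introduce $d$ new variables and splice them, together with $v_{i}$, into the monomial at positions compatible with the idempotent structure surrounding $r_{i}$, so that on an extension of the chosen evaluation the new variables realize a basis of $\overline{A}$ while $v_{i}$ keeps the value $r_{i}$; then apply $\mbox{Alt}$ to this $(d+1)$-element set. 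As before, the alternation does not annihilate the distinguished evaluation — the surviving terms form a nonzero sum of determinant type, with the single radical slot of the big set occupied by $r_{i}$. Doing this for all $i$ yields a multilinear nonidentity $f'$ of $A$ carrying the $\mu$ small sets of cardinality $d$ (untouched) and $n_{A}-1$ big sets of cardinality $d+1$; by the upper bound recorded above there is no further radical room, so the count is exactly $n_{A}-1$ and the small sets remain alternating. This $f'$ is the polynomial sought, and after a final multilinearization we are done.

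The main obstacle is the thickening step. One must place the new variables around each $r_{i}$ so that (a) after alternation the distinguished evaluation is still nonzero — the non-degeneracy/determinant computation, which relies on the ambient semisimple values spanning $\overline{A}$ and on $r_{i}$ being the unique radical value in its big set — and (b) no nonzero evaluation of $f'$ can force two radical values into one big set or a radical value outside the big sets, which is what pins the number of big sets at $n_{A}-1$ and protects the original small sets. The genuinely delicate point is (a): arranging that the $d$ new variables attached near $r_{i}$ can be evaluated on a \emph{full} basis of $\overline{A}$, not merely on a basis of a single simple component, which is exactly why one wants the semisimple part of the evaluation of $f$ to span $\overline{A}$ before thickening — a condition secured by building $f$ out of $g_{0}$ together with a small alternating set of cardinality $d$.
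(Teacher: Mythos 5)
The proposal takes the right starting point (the property-$K$ full polynomial of \propref{property_K}, enriched with small sets via \lemref{full-folds} and the Phoenix lemma), but the ``thickening'' step that is supposed to manufacture the big sets has a genuine gap, and it is not what the paper does.

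You propose, for each radical variable $v_{i}$ carrying a value $r_{i}\in J(A)$, to ``introduce $d$ new variables and splice them, together with $v_{i}$, into the monomial at positions compatible with the idempotent structure surrounding $r_{i}$, so that \ldots\ the new variables realize a basis of $\overline{A}$.'' But a basis of $\overline{A}=A_{1}\times\cdots\times A_{q}$ contains elements from \emph{all} $q$ simple components, while a contiguous region of the monomial near $r_{i}$ is bordered by at most two idempotents $1_{A_{j(i)}}$, $1_{A_{\widetilde{j}(i)}}$. Any nonzero value placed between two consecutive factors must be compatible with the idempotent at that junction, so variables ``attached near $r_{i}$'' can only take values from at most two of the $q$ simple components; you cannot evaluate them on a full basis of $\overline{A}$ when $q>2$. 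You flag this as ``the genuinely delicate point'' and gesture at the ambient semisimple values spanning $\overline{A}$, but that does not help: spanning $\overline{A}$ across the whole monomial does not give $d$ insertion slots near one radical value touching all $q$ components. Thus the nondegeneracy computation in (a) of your closing paragraph does not go through, and the construction of the big sets fails.

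The paper resolves exactly this difficulty in a structurally different way: it does \emph{not} create new variables for the big sets. Instead, the small sets $U_{l}^{1}\cup\cdots\cup U_{l}^{q}$ are built by substituting a \lemref{full-folds}-polynomial $Z_{r_{t}}$ for a fresh variable $y_{r_{t}}$ placed next to each $w_{r_{t}}$, so each small set is already distributed over the whole monomial, visiting every simple component. Each radical variable is then \emph{attached to an existing small set}, turning it into a big set of cardinality $d+1$. Vanishing of the nontrivial alternation terms is then argued by a two-case analysis on the bordering idempotents: when $j(i)\neq\widetilde{j}(i)$ the swap is killed by idempotent mismatch; when $j(i)=\widetilde{j}(i)$ the swapped term is shown to equal an evaluation of the original $f$ with fewer than $n_{A}-1$ radicals, hence zero by property $K$. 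Your sketch does not reproduce this dichotomy, and the paper additionally runs a separate case analysis for $q=1$ and for $A$ without identity (adjoining $F1$ and the idempotent $e_{0}=1-\sum 1_{A_{i}}$), none of which appear in your argument. To repair the proposal you would need to abandon the ``new block of $d$ variables per radical'' idea and instead merge each radical variable into one of the already-constructed small sets, then carry out the idempotent/property-$K$ dichotomy as above.
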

\begin{rem}
The theorem is clear either in case $A$ is radical or semisimple
(i.e. $H$-simple). Hence for the proof we assume that $q\geq1$ (the
number of simple components of $A$) and $n_{A}>1$. 
\begin{rem}
Any nonzero evaluation of such $f$ must consists only of semisimple
evaluations in the $\nu$-folds and each one of the big sets (namely
the sets of cardinality $d+1$) must have exactly one radical evaluation.
\end{rem}
\end{rem}
\begin{proof}
By the preceding Lemma we take a multilinear nonidentity $H$-polynomial
$f$, with respect to which $A$ is full and has property $K$. Let
us fix a nonzero evaluation $x\longmapsto\widehat{x}$ realizing the
``full'' property. Note (by the remark above) that by the construction
of $f$, being the evaluation nonzero, precisely $n_{A}-1$ variables
must obtain radical values and the rest of the variables obtain semisimple
values. Let us denote by $w_{1},\ldots,w_{n_{A}-1}$ the variables
that obtain radical values (in the evaluation above) and by $\widehat{w}_{1},\ldots,\widehat{w}_{n_{A}-1}$
their corresponding values. By abuse of language we refer to the variables
$w_{1},\ldots,w_{n_{A}-1}$ as \textit{radical} variables.

We will consider four cases. These correspond to whether $A$ has
or does not have an identity element and whether $q$ (the number
of $H-$simple components) $>1$ or $q=1$.

\textbf{Case $(1,1)$} ($A$ has an identity element and $q>1$).

By linearity we may assume the evaluation of any radical variable
$w_{i}$ is of the form $1_{A_{j(i)}}\widehat{w}_{i}1_{A_{\widetilde{j}(i)}}$,
$i=1,\ldots,n_{A}-1$, where $1_{A_{k}}$ is the identity element
of the $H$-simple component $A_{k}$. Note that the evaluation remains
full (i.e. visits any simple component of $A$).

Choose a monomial $X$ of $f$ which does not vanish upon the above
evaluation. Notice that the variables of $X$ which get semisimple
evaluations from \textit{different} $H$-simple components must be
separated by radical variables.

Consider the radical evaluations which are bordered by pairs of elements
$(1_{A_{j(i)}},1_{A_{\widetilde{j}(i)}})$ where $j(i)\neq\widetilde{j}(i)$
(i.e. belong to different $H$-simple components). Then it is clear
that every simple component is represented by one of the elements
in these pairs.

For $t=1,\ldots,q$ we fix a variable $w_{r_{t}}$ whose radical value
is $1_{A_{j(r_{t})}}\widehat{w}_{r_{t}}1_{A_{\widetilde{j}(r_{t})}}$
where
\begin{enumerate}
\item $j(r_{t})\neq\widetilde{j}(r_{t})$ (i.e. different $H$-simple components).
\item One of the element $1_{A_{j(r_{t})}},1_{A_{\widetilde{j}(r_{t})}}$
is the identity element of the $t$-th simple component.
\end{enumerate}
We refer to that element as the \textit{idempotent attached} to the
simple component $A_{t}$.
\begin{rem*}
Note that we may have $w_{r_{t}}=w_{r_{t^{'}}}$ even if $t\neq t^{'}$.
\end{rem*}
Next replace the variables $w_{r_{t}}$, $t=1,\ldots,q$ by $z_{r_{t}}y_{r_{t}}z_{r_{t}}^{\prime}w_{r_{t}}$
or $w_{r_{t}}z_{r_{t}}y_{r_{t}}z_{r_{t}}^{\prime}$ (and obtain a
new polynomial $f_{1}$) according to the location of the primitive
$H$-invariant idempotent attached to the $t$-th simple component.
Clearly, by evaluating the variables $y_{r_{t}},z_{r_{t}}$ and $z_{r_{t}}^{\prime}$
by $1_{A_{j(r_{t})}}$ (or $1_{A_{\tilde{j}(r_{t})}}$) the value
of $f_{1}$ remains the same as $f_{1}$ under the original substitution
and in particular nonzero. For later reference we call the variables
$z_{r_{t}}$ and $z_{r_{t}}^{\prime}$ \emph{frame} \emph{variables
}and consider the evaluation $1_{A_{j(r_{t})}}\to z_{r_{t}},z_{r_{t}}^{\prime}$
(or $1_{A_{\tilde{j}(r_{t})}}$).

Applying \lemref{full-folds} we can replace (in $f_{1}$) the variable
$y_{r_{t}}$, $t=1,\dots,q$, by a $\mu$-fold alternating polynomial
(on the distinct sets $U_{l}^{t}$) $Z_{r_{t}}=Z_{r_{t}}(U_{1}^{t},\ldots,U_{\nu}^{t};Y_{t})$,
and obtain a nonzero polynomial $f_{2}$. Here, the sets $U_{l}^{t}$,
$l=1,\ldots,\nu$ are each of cardinality $\dim_{F}(A_{t})$. Now,
if we further alternate the sets $U_{l}^{1},\ldots,U_{l}^{q}$ for
$l=1,\ldots,\nu$ together (that is for each $l$, apply $\mbox{Alt}_{U_{l}^{1}\cup\cdots\cup U_{l}^{t}}$)
we obtain a nonidentity polynomial with $\nu$-folds of (small) sets
of alternating variables where each set is of cardinality $\dim(\overline{A})$.
In the sequel we fix an evaluation of the polynomials $Z_{r_{t}}$
(or $\widetilde{Z}_{r_{t}}$) so the entire polynomial obtains a nonzero
value. 

Our next task is to construct such polynomial with an extra $n_{A}-1$
alternating sets of cardinality $d+1$ (big sets). Consider the radical
variables $w_{r_{t}}$, $t=1,\dots,q$ with radical evaluations $1_{A_{j(r_{t})}}\widehat{w}_{r_{t}}1_{A_{\widetilde{j}(r_{t})}}$,
$j(r_{t})\neq\widetilde{j}(r_{t})$ (i.e. different $H$-simple components).

We attach each variable $w_{r_{t}}$ to one alternating set $U_{l}^{1},\ldots,U_{l}^{q}$
(some $l$). We see that any nontrivial permutation of $w_{r_{t}}$
with one of the variables of $U_{l}^{1},\ldots,U_{l}^{q}$, keeping
the evaluation above, will yield a \textit{zero value} since the primitive
$H$-invariant idempotents values in frames variables of each $Z_{r_{1}},\ldots,Z_{r_{q}}$
belong to the same $H$-simple components whereas the pair of idempotents
in $1_{A_{j(r_{t})}}\widehat{w}_{r_{t}}1_{A_{\widetilde{j}(r_{t})}}$
belong to different $H$-simple components. Thus we may alternate
the variable $w_{r_{t}}$ with $U_{l_{t}}^{1},\ldots,U_{l_{t}}^{q}$,
$t=1,\ldots,q$ and obtain a multilinear nonidentity of $A$. Next
we proceed in a similar way with any remainig variable $w_{i}$ whose
evaluation is $1_{A_{j(i)}}\widehat{w}_{i}1_{A_{\widetilde{j}(i)}}$
and $j(i)\neq\widetilde{j}(i)$.

Finally we need to attach the radical variables $w_{i}$ whose evaluation
is $1_{A_{j(i)}}\widehat{w}_{i}1_{A_{\widetilde{j}(i)}}$ where $j(i)=\widetilde{j}(i)$
(i.e. the same simple component) to some small sets. We claim also
here that if we attach the variable $w_{i}$ to the sets $U_{l}^{1},\ldots,U_{l}^{q}$
(some $l$), any nontrivial permutation yields a zero value, and hence
the value of the entire polynomial remains unchanged. If we permute
$w_{i}$ with an element $u_{0}\in U_{l}^{k}$ which is bordered by
idempotents different from $1_{A_{j(i)}}$ we obtain zero. On the
other we claim that any permutation of $w_{i}$ with an element $u_{0}\in U_{l}^{k}$
which is bordered by the idempotent $1_{A_{j(i)}}$ corresponds to
an evaluation of the original polynomial with fewer radical values
and then we will be done by the property $K$. In order to simplify
our notation let $\{U_{l}^{1},\ldots,U_{l}^{q}\}=\{U^{1},\ldots,U^{q}\}$
(omit the index $l$) and suppose without loss of generality, that
$u_{0}\in U^{1}$. Permuting the variables $w_{i}$ and $u_{0}$ (with
their corresponding evaluations) we see that the polynomial $Z_{r_{1}}=Z_{r_{1}}(U^{1}=U_{1}^{1},\ldots,U_{\nu}^{t};Y_{t})$
(or $\widetilde{Z}_{r_{1}}$) with $w_{i}$ replacing $u_{0}$, obtains
a radical value which we denote by $\widehat{\widehat{w}}$. Returning
to our original polynomial $f$, we obtain the same value if we evaluate
the variable $w_{i}$ by a suitable semisimple element, the variable
$w_{r_{1}}$ by $\widehat{\widehat{w}}\widehat{w}_{r_{1}}$ (or $\widehat{w}_{r_{1}}\widehat{\widehat{w}}$)
and the evaluation of any semisimple variable remains semisimple.
It follows that if we make such a permutation for a unique radical
variable $w_{i}$, the value amounts to an evaluation of the original
polynomial with $n_{A}-2$ radical evaluations and hence vanishes.
Clearly, composing $p>0$ permutations of that kind yields a value
which may be obtained by the original polynomial $f$ with $n_{A}-1-p$
radical evaluations and hence vanishes by property $K$. This completes
the proof of the lemma where $A$ has identity and $q$, the number
of simple components, is $>1$.

\smallskip{}

\textbf{Case $(2,1)$}. Suppose now $A$ has no identity element and
$q>1$. Let $A_{0}=A\oplus F1$, where $h(1)=\epsilon(h)1$. The proof
in this case is basically the same as in the case where $A$ has an
identity element. Let $e_{0}=1-1_{A_{1}}-1_{A_{2}}-\cdots-1_{A_{q}}\in A_{0}$
and attach $e_{0}$ to the set of elements which border the radical
values $\widehat{w}_{j}$. A similar argument shows that also here
every $H$-simple component ($A_{1},\ldots,A_{q}$) is represented
in one of the bordering pairs where the partners are \textit{different}
(the point is that one of the partners (among these pairs) may be
$e_{0}$). Now we complete the proof exactly as in case $(1,1)$.

\smallskip{}

\textbf{Case $(2,2)$}. In order to complete the proof of the lemma
we consider the case where $A$ has no identity element and $q=1$.
The argument in this case is different. For simplicity we denote by
$e_{1}=1_{A_{1}}$ and $e_{0}=1-e_{1}$. Let $f(x_{1},\ldots,x_{n})$
be a nonidentity of $A$ which satisfies property $K$ and let $f(\widehat{x}_{1},\ldots,\widehat{x}_{n})$
be a nonzero evaluation for which $A$ is full. If $e_{1}f(\widehat{x}_{1},\ldots,\widehat{x}_{n})\neq0$
(or $f(\widehat{x}_{1},\ldots,\widehat{x}_{n})e_{1}$) we proceed
as in case $(1,2)$. To treat the remaining case we may assume further
that

\[
e_{0}f(\widehat{x}_{1},\ldots,\widehat{x}_{n})e_{0}\neq0
\]

First note, by linearity, that each one of the radical values $\widehat{w}$
may be bordered by one of the pairs $\{(e_{0},e_{0}),(e_{0},e_{1}),(e_{1},e_{0}),(e_{1},e_{1})\}$
so that if we replace the evaluation $\widehat{w}$ (of $w$) by the
corresponding element $e_{i}\widehat{w}e_{j}$, $i,j=0,1$, we get
nonzero.

Now, one of the radical values (say $\widehat{w_{0}}$) in $f(\widehat{x}_{1},\ldots,\widehat{x}_{n}$)
allows a bordering by the pair $(e_{0},e_{1})$ (or $(e_{1},e_{0})$),
then replacing $w_{0}$ by $w_{0}y$ (or $yw_{0}$) yields a nonidentity.
Invoking Lemma \ref{lem:full-folds} we may replace the variable $y$
by a polynomial $p$ with $\mu$-folds of alternating (small) sets
of cardinality $\dim_{F}(\overline{A})=\dim_{F}(A_{1})$. Then we
attach the radical variable $w_{0}$ to one of the small sets. Clearly,
the value of any alternation of this (big) set is zero since the borderings
are different. The remaining possible values of radical variables
are either $e_{0}\widehat{w}e_{0}$ or $e_{1}\widehat{w}e_{1}$. Note
that since semisimple values can be bordered only by the pair $(e_{1},e_{1})$,
any alternation of the radical variables whose radical value is $e_{0}\widehat{w}e_{0}$
with elements of a small set vanishes and again the value of the polynomial
remains unchanged. Finally we attach the remaining radical variables
(whose values are to suitable small sets in $p$. Here, any alternation
vanishes because of property $K$. This settles this case. Obviously,
the same holds if the bordering pair above is $(e_{1},e_{0})$. \end{proof}
\begin{cor}
If $A$ is basic then its $H$-Kemer index $(\alpha,s)$ equals $(d,n_{A}-1)$.
\begin{cor}
\label{Kemer_polynomials_of_A_are_Phoenix}Let $A$ be a finite dimensional
$H$-module algebra, then there is a number $\mu_{A}^{\prime}$ such
that every $H$-Kemer polynomial $f$ of $A$ of rank at least $\mu_{A}^{\prime}$
satisfies the $A$-Phoenix property.
\end{cor}
\end{cor}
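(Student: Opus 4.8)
The plan is to transfer the question to the $H$-basic direct factors of $A$ that still carry its full $H$-Kemer index, since on those Lemma \ref{lem:Phoenix} and the construction in the proof of Kemer's Lemma~$2$ are available. By Remark \ref{rem:basic_is_everywhere} I would first write $A\sim_{H-PI}B_{1}\times\cdots\times B_{q}$ with each $B_{i}$ $H$-basic, so $id^{H}(A)=\bigcap_{i}id^{H}(B_{i})$, and after renumbering let $B_{1},\dots,B_{p}$ be precisely those with $\mathrm{Ind}(B_{i})=\mathrm{Ind}(A)=:(\alpha,s)$. From $id^{H}(A)\subseteq id^{H}(B_{i})$ and Remark \ref{rem:comparison-1} one has $\mathrm{Ind}(B_{i})\le(\alpha,s)$ for every $i$, and a pigeonhole argument over $H$-Kemer polynomials of $A$ of arbitrarily large rank (each is a nonidentity of some $B_{i}$) shows $p\ge1$. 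For $i>p$ we have $\mathrm{Ind}(B_{i})<(\alpha,s)$, so by maximality of the $H$-Kemer index of $B_{i}$ there is an integer $\mu_{i}$ such that every multilinear $H$-polynomial alternating on $\mu_{i}$ disjoint sets of cardinality $\alpha$ together with $s$ disjoint sets of cardinality $\alpha+1$ lies in $id^{H}(B_{i})$. I would then take $\mu_{A}^{\prime}$ to be any integer exceeding $\mu_{\Gamma}$, each $\mu_{i}$ $(i>p)$ and each $n_{B_{i}}$ $(i\le p)$, and claim that an $H$-Kemer polynomial $f$ of $A$ of rank $\ge\mu_{A}^{\prime}$ has the $A$-Phoenix property; concretely, that for every multilinear $f^{\prime}\in\langle f\rangle_{H}\setminus id^{H}(A)$ there is a multilinear $f^{\prime\prime}\in\langle f^{\prime}\rangle_{H}\setminus id^{H}(A)$ which is again an $H$-Kemer polynomial of $A$ of rank $\ge\mu_{A}^{\prime}$.

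Given such $f$ and $f^{\prime}$, the first step is the descent to a single basic factor. Since $f^{\prime}\notin\bigcap_{i}id^{H}(B_{i})$ we get $f^{\prime}\notin id^{H}(B_{j})$ for some $j$, and because $id^{H}(B_{j})$ is an $H$-$T$-ideal and $f^{\prime}\in\langle f\rangle_{H}$, necessarily $f\notin id^{H}(B_{j})$ too. As $f$ alternates on at least $\mu_{A}^{\prime}\ge\mu_{i}$ small sets of cardinality $\alpha$ and on $s$ big sets of cardinality $\alpha+1$, the defining property of $\mu_{i}$ excludes $j>p$; hence $\mathrm{Ind}(B_{j})=(\alpha,s)$, and since $B_{j}$ is $H$-basic the preceding Corollary identifies this with its parameter, so $\alpha=d(B_{j})=\dim_{F}(\overline{B_{j}})$ and $s=n_{B_{j}}-1$. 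Thus $f$ is an $H$-Kemer polynomial of $B_{j}$.

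Next I would read off two evaluation properties of $f$ on $B_{j}$ forced by this Kemer shape. Each of the $s=n_{B_{j}}-1$ big sets has cardinality $d(B_{j})+1>\dim_{F}(\overline{B_{j}})$, so in any nonzero evaluation on $B_{j}$ it carries at least one radical value; by nilpotency of $J(B_{j})$ there are then exactly $n_{B_{j}}-1$ radical values, one per big set and none in a small set, so $f$ vanishes on any evaluation of $B_{j}$ with fewer radicals and $B_{j}$ has property $K$ with respect to $f$; moreover, a small set then receives only semisimple values, which must be $\dim_{F}(\overline{B_{j}})$ linearly independent elements of $\overline{B_{j}}$ (else the alternation kills the evaluation), hence a basis meeting every $H$-simple component, so $B_{j}$ is full with respect to every nonzero evaluation of $f$. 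By Lemma \ref{lem:Phoenix}(1) and (3) both properties pass from $f$ to the nonidentity $f^{\prime}\in\langle f\rangle_{H}$: $B_{j}$ is full with respect to every nonzero evaluation of $f^{\prime}$ and has property $K$ with respect to $f^{\prime}$. But then $f^{\prime}$ is exactly the kind of input from which the construction in the proof of Kemer's Lemma~$2$ (\lemref{Kemer's_Lemma_2}) produces, for any prescribed $\nu$, a multilinear nonidentity of $B_{j}$ with $\nu$ alternating small sets of cardinality $d(B_{j})=\alpha$ and exactly $n_{B_{j}}-1=s$ alternating big sets of cardinality $\alpha+1$. Choosing $\nu\ge\mu_{A}^{\prime}$, and observing that that construction only applies $H$-$T$-operations to $f^{\prime}$ — substituting monomials and Kemer-Lemma-$1$ polynomials (\lemref{full-folds}) into variables, multiplying, and alternating — its output $f^{\prime\prime}$ lies in $\langle f^{\prime}\rangle_{H}$; since $id^{H}(A)\subseteq id^{H}(B_{j})$ it is a nonidentity of $A$, and by its alternation pattern it is an $H$-Kemer polynomial of $A$ of rank $\nu\ge\mu_{A}^{\prime}$, which is the polynomial we wanted.

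I expect the genuine obstacle to be the descent step: one must ensure that $f$ and $f^{\prime}$ are simultaneously nonidentities of one basic factor $B_{j}$ whose $H$-Kemer index is still $(\alpha,s)$, which is exactly what the rank threshold $\mu_{A}^{\prime}$ (through the integers $\mu_{i}$) and the $H$-$T$-ideal property of $id^{H}(B_{j})$ secure. Once inside such a $B_{j}$, the transfer of fullness and property $K$ to $f^{\prime}$ is immediate from Lemma \ref{lem:Phoenix}, and the Kemer's Lemma~$2$ construction manufactures $f^{\prime\prime}$ essentially formally — the only thing needing care there being the bookkeeping that every step of that construction is realized by $H$-$T$-operations, so that the result stays inside $\langle f^{\prime}\rangle_{H}$.
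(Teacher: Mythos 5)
Your proposal is correct and follows essentially the same path as the paper: reduce to a direct product of $H$-basic factors, choose the threshold $\mu_{A}^{\prime}$ so that any basic factor on which a consequence of the Kemer polynomial survives must have the full index $(\alpha,s)$, identify fullness and property $K$ for $f$ on that factor, push them to $f^{\prime}$ by Lemma~\ref{lem:Phoenix}(1),(3), and run the Kemer's Lemma~$2$ construction on $f^{\prime}$ to produce $f^{\prime\prime}\in\langle f^{\prime}\rangle_{H}$. The only difference is that you spell out the threshold arithmetic (the integers $\mu_{i}$ for the lower-index factors) and the evaluation analysis forcing fullness and property $K$, whereas the paper states these as immediate; this is the same argument with the bookkeeping made explicit.
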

\begin{proof}
Suppose $A$ is $H$-basic. Clearly if $f$ is $H$-Kemer of rank
$\mu\geq\mu_{A}=\mu_{A}^{\prime}$ then $A$ is full and satisfies
property $K$ with respect to $f$. The Corollary now follows from
Lemmas \lemref{Phoenix} and \lemref{Kemer's_Lemma_2}. 

Consider now the general case, we may suppose that $A=B_{1}\times\cdots\times B_{q}$
is a product of $H$-basic algebras.Let $f^{\prime}$be a multilinear
consequence of $f$ which is not an identity of $A$. Thus $f^{\prime}$
must be a non identity of (at least) one of the $B_{i}$, say $B_{1}$.
Therefore, $f$ is also a non identity of $B_{1}$. Thus, if $\mu\geq\mu_{B_{1}}$
we can conclude that $\mbox{Ind}(A)\leq\mbox{Ind}(B_{1})$, so $\mbox{Ind}(A)=\mbox{Ind}(B_{1})$.
Hence $f$ is $H$-Kemer of $B_{1}$. By the previous paragraph we
are done if we set $\mu_{A}^{\prime}=\max\{\mu_{B_{1}},...,\mu_{B_{q}}\}$.
\end{proof}

\section{Technical tools\label{sec:technical}}

\subsection{Affine relatively $H$-module algebras \label{sub:Affine-relatively--module}}

Recall that an algebra $W$ satisfies the $t$th Capelli identity
if any multilinear polynomial having an alternating set of cardinality
(at least) $t$ is an $H$-identity of $W$. The purpose of this section
is to prove that for any such algebra one can assume that the corresponding
relatively free algebra $\mathcal{W}$ is generated by (only) $t-1$
variables. More precisely, we will show that if 
\[
\mathcal{W}=F^{H}\left\langle x_{1},...,x_{t-1}\right\rangle /id^{H}(W)\cap F^{m}\left\langle x_{1},...,x_{t-1}\right\rangle 
\]
then $id^{H}(W)=id^{H}(\mathcal{W})$. To this end we recall some
basic results (and fix notation) from the representation theory of
$S_{n}$ (the symmetric group on $n$ elements) and their application
to PI theory.

Let $P_{n}^{H}(W)=P_{n}^{H}/(P_{n}\cap id^{H}(W))$\nomenclature[993]{$P_{n}^{H}(W)$}{The space $P_{n}^{H}/(P_{n}\cap id^{H}(W))$},
where $P_{n}^{H}$ is the space (of dimension $\left(\dim_{F}H\right)^{n}\cdot n!$)
of all multilinear polynomials with variables $x_{1},...,x_{n}$.
The group $S_{n}$ acts on (right action!) $P_{n}^{H}(W)$ via $\sigma\cdot\overline{x_{i_{1}}^{h_{1}}\cdots x_{i_{n}}^{h_{n}}}=\overline{x_{\sigma(i_{1})}^{h_{1}}\cdots x_{\sigma(i_{n})}^{h_{n}}}$
and hence we may consider its decomposition into irreducible submodules.
By the representation theory of $S_{n}$\nomenclature[994]{$S_{n}$}{The symmetric group of $n$ elements}
in characteristic zero, any such submodule can be written as $FS_{n}e_{T_{\mu}}\cdot f$,
where $f$ is some polynomial in $P_{n}^{H}(W)$, $T_{\mu}$\nomenclature[995]{$T_{\mu}$}{A Young tableau corresponding to the partition $\mu$}
is some Young tableau of the partition $\mu$ (of $n$) and 
\[
e_{T_{\mu}}=\sum_{\sigma\in\mathcal{R}_{T_{\mu}},\tau\in\mathcal{C}_{T_{\mu}}}(-1)^{\tau}\sigma\tau.
\]
\nomenclature[997]{$e_{T_{\mu}}$}{The  $FS_n$ element $\sum_{\sigma\in\mathcal{R}_{T_{\mu}},\tau\in\mathcal{C}_{T_{\mu}}}(-1)^{\tau}\sigma\tau$}(here
$\mathcal{R}_{T_{\mu}}$ and $\mathcal{C}_{T_{\mu}}$\nomenclature[996]{$\mathcal{R}_{T_{\mu}}$ / $\mathcal{C}_{T_{\mu}}$}{The row and column stabilizers of the young tableau $T_\mu$}
are the rows and columns stabilizers respectively). Clearly, if $f\in P_{n}^{H}(W)$
is nonzero, then there is some partition $\mu$ and a (standard) tableau
$T_{\mu}$such that $e_{T_{\mu}}\cdot f$ is nonzero.

We are ready to prove the main result of this section.
\begin{thm}
\label{thm:relativelly_affine}Let $W$ be an $H$-module algebra
which satisfies the $t$th Capelli identity. Then $id^{H}(\mathcal{W})=id^{H}(W)$
where $\mathcal{W}$ is the relatively free $H$-module algebra of
$W$ generated by $t-1$ variables. \end{thm}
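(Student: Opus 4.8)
The plan is to show that $id^H(W) = id^H(\mathcal W)$ where $\mathcal W = F^H\langle x_1,\dots,x_{t-1}\rangle/\bigl(id^H(W)\cap F^H\langle x_1,\dots,x_{t-1}\rangle\bigr)$. One inclusion is trivial: since $\mathcal W$ is a homomorphic image of a subalgebra of the relatively free algebra of $W$ on all variables, every identity of $W$ in the variables $x_1,\dots,x_{t-1}$ is an identity of $\mathcal W$, and by multilinearization it suffices to track multilinear identities; but a multilinear identity in \emph{any} number of variables is determined by its behaviour under substitutions, so we really need the reverse inclusion $id^H(\mathcal W)\subseteq id^H(W)$. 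So the heart of the matter is: if $f=f(x_1,\dots,x_n)$ is a multilinear $H$-polynomial that vanishes on $\mathcal W$ (equivalently on all substitutions using only $t-1$ generators, modulo $id^H(W)$), then $f\in id^H(W)$.

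First I would reduce, using the representation theory of $S_n$ recalled just above, to the case where $f$ generates an irreducible $S_n$-submodule of $P_n^H$, i.e. $f = e_{T_\mu}\cdot g$ for some Young tableau $T_\mu$ of a partition $\mu\vdash n$ and some multilinear $g$. The key combinatorial observation is that applying the Young symmetrizer $e_{T_\mu}$ alternates the variables lying in each column of $T_\mu$; hence if $\mu$ has a column of length $\geq t$, then $e_{T_\mu}\cdot g$ is a multilinear polynomial with an alternating set of size $\geq t$, which by the hypothesis that $W$ satisfies the $t$th Capelli identity already lies in $id^H(W)$. Therefore I may assume every column of $\mu$ has length $\leq t-1$, i.e. $\mu$ has at most $t-1$ rows.

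Next, the standard PI-theoretic fact: a nonzero element of $P_n^H(W)$ generating an irreducible $S_n$-module associated to a partition with at most $t-1$ rows does not vanish identically on $W$ \emph{iff} it does not vanish on substitutions drawn from a set of only $t-1$ "generic" elements. Concretely, if $\mu$ has $k\le t-1$ rows, one can arrange (after acting by a suitable permutation, which does not change whether $f\in id^H(W)$) that the semistandard-type filling lets us substitute equal elements in each row: the variables in row $i$ all get substituted by the $i$-th generic variable $x_i$ (with the appropriate $H$-decorations retained), and the alternation in columns guarantees this substitution is not killed for formal reasons. This shows that $f\in id^H(W)$ as soon as $f$ vanishes after identifying variables according to rows, i.e. as soon as the image of $f$ in $\mathcal W$ vanishes. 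Thus $f\in id^H(\mathcal W)$ forces $f\in id^H(W)$, completing the argument. I would then note the $H$-decorations cause no trouble: the $S_n$-action only permutes the variable subscripts and leaves the $h$'s attached, so all of the above goes through verbatim with $x_i^{h}$ in place of $x_i$, and the Capelli hypothesis is about alternation in the subscript variables, which is exactly what columns of $T_\mu$ alternate.

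The main obstacle I anticipate is the "row-identification" step: making precise that a nonzero polynomial $e_{T_\mu}\cdot g$ with $\mu$ having $\le t-1$ rows is detected by substitutions that are constant along rows. In the ordinary (non-$H$) theory this is classical (it is how one proves that a PI-algebra with a Capelli identity has a relatively free algebra on finitely many generators), and the cleanest route is: pick a standard tableau $T_\mu$, observe $e_{T_\mu} = b_{T_\mu}a_{T_\mu}$ where $a_{T_\mu}$ symmetrizes rows and $b_{T_\mu}$ antisymmetrizes columns, and note that symmetrizing the rows of $g$ and then substituting the same element for all variables in a given row reproduces (a scalar multiple of) $g$ evaluated with that row-constant substitution; the column antisymmetrization $b_{T_\mu}$ then acts on $\le t-1$ distinct elements per column without forcing a collapse. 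I would carry this out with care about which variables get identified and verify the nonvanishing bookkeeping, but conceptually it is the same mechanism as in the commutative and group-graded cases (cf. \cite{Aljadeff2010a}), so no genuinely new idea is needed beyond checking that the $H$-module structure is inert under the whole construction.
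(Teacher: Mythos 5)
Your overall strategy matches the paper's: decompose $P_n^H(W)$ into irreducible $S_n$-modules, use the Capelli hypothesis to force the partition to have fewer than $t$ rows, and then use row-identification plus multilinearization to pass back and forth between $W$ and $\mathcal W$. However, there is a technical subtlety in your write-up that the paper sidesteps by its choice of convention, and as phrased your row-identification step does not quite close. You take $e_{T_\mu}=b_{T_\mu}a_{T_\mu}$, so that $f=e_{T_\mu}g$ is column-alternating — this makes the Capelli step pleasantly direct, but it also means $f$ itself is \emph{not} row-symmetric. The polynomial that is row-symmetric is $a_{T_\mu}g$, and you correctly observe that $\rho(a_{T_\mu}g)$ (the row-constant substitution) recovers $a_{T_\mu}g$ by multilinearization. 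But the object whose nonvanishing on $\mathcal W$ you actually need is $\rho(f)=\rho(b_{T_\mu}a_{T_\mu}g)$, and the operators $\rho$ and $b_{T_\mu}$ do not commute: $b_{T_\mu}$ permutes variables across rows, so $\rho(b_{T_\mu}h)\neq b_{T_\mu}\rho(h)$, and for $\mu=(2,1)$ one can check that the multilinearization of $\rho(f)$ is not a scalar multiple of $f$. So "$b_{T_\mu}$ then acts on $\le t-1$ distinct elements per column without forcing a collapse" is not yet an argument.

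The paper uses the convention $e_{T_\mu}=a_{T_\mu}b_{T_\mu}$ and, rather than writing $f$ as $e_{T_\mu}g$, it takes a nonidentity $f$ and produces $g=e_{T_\mu}f\ne 0$ with the column-alternating intermediate $g_0=b_{T_\mu}f$ handling the Capelli bound; since $g=a_{T_\mu}g_0$ is then \emph{directly} row-symmetric, the row-identification/multilinearization step is immediate. If you wish to keep your convention, the fix is to pass from $f=e_{T_\mu}g=b_{T_\mu}a_{T_\mu}g$ to $a_{T_\mu}f=a_{T_\mu}b_{T_\mu}a_{T_\mu}g$, which is row-symmetric, lies in the same irreducible module as $f$, and is nonzero whenever $f$ is (if $a_{T_\mu}f=0$ then $b_{T_\mu}a_{T_\mu}f=e_{T_\mu}^2g=c_{T_\mu}e_{T_\mu}g=0$, contradicting $e_{T_\mu}g\ne 0$); then row-identify $a_{T_\mu}f$ instead of $f$. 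With that modification your argument is the paper's argument.
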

\begin{proof}
It is clear that $id^{H}(W)\subset id^{H}(\mathcal{W})$. For the
other direction suppose $f$ is a multilinear nonidentity of $W$
of degree $n$. Then, by the theorem above, there is a partition $\mu$
of $n$ and a tableau $T_{\mu}$ such that $g=e_{T_{\mu}}\cdot f$
is a nonidentity of $W$.

Let $g_{0}=\sum_{\tau\in\mathcal{C}_{T_{\mu}}}(-1)^{\tau}\tau\cdot f=\sum_{\tau\in\mathcal{C}_{T_{\mu}}(1)}(-1)^{\tau}\tau\cdot\left(\sum_{k=1}^{l}(-1)^{\tau_{k}}\tau_{k}\cdot f\right)$,
where $\mathcal{C}_{T_{\mu}}(1)$ is the stabilizer of the first column
of $T_{\mu}$ and $\tau_{1},...,\tau_{l}$ is a full set of representatives
of $\mathcal{C}_{T_{\mu}}(1)$-cosets in $C_{T_{\mu}}$.

Let $h(\mu)$\nomenclature[998]{$h(\mu)$}{The height of $\mu$} (the
height of $\mu$) denote the number of rows in $T_{\mu}$. If $h(\mu)\geq t$,
the polynomial $g_{0}$ is alternating on the variables of the first
column and hence by assumption is an identity of $W$. But in that
case also the polynomial $g=\sum_{\sigma\in R_{T_{\mu}}}\sigma\cdot g_{0}$
is in $id^{H}(W)$ contradicting our assumption and so $h(\mu)$ must
be smaller than $t$.

Since $g=\sum_{\sigma\in\mathcal{R}_{T_{\mu}}}\sigma\cdot g_{0}$,
it is symmetric in the variables corresponding to any row of $T_{\mu}$
and so if for any $i=1,\ldots,h(\mu)$ we replace by $y_{i}$ \textit{all}
variables in $g$ corresponding to the $i$th row we obtain a polynomial
$\hat{g}$ which yields $g$ by multinearization. In particular $g\in id^{H}(W)$
if and only if $\hat{g}\in id^{H}(W)$. Finally, $\hat{g}$ can be
regarded as an element of $\mathcal{W}$ (at most $t-1$ variables)
and nonzero, thus $g$ is a nonidentity of $\mathcal{W}$ and hence
also $f$. \end{proof}
\begin{rem}
In the sequel, if $W$ satisfies the $t$th Capelli identity, we'll
consider affine relatively free $H$-module algebras $\mathcal{W}$
with at least $t-1$ generating variables.\end{rem}
\begin{defn}
Suppose $W$ is an affine $H$-module algebra. Any algebra of the
form 
\[
F^{H}\left\langle x_{1},...,x_{t}\right\rangle /id^{H}(W)\cap F^{H}\left\langle x_{1},...,x_{t}\right\rangle 
\]
having the same $T$ ideal as $W$ is called \emph{affine relatively
free $H$-module algebra of $W$}. 
\end{defn}
We close this subsection with the following useful lemma.
\begin{cor}
\label{cor:useful_lemma} Suppose $\mathcal{W}$ is an relatively
free $H$-module algebra of $W$ (in particular we will be interested
in the case where $\mathcal{W}$ is affine). Let $I$ be any $H$-$T$
ideal and denote by $\widehat{I}$ the ideal of $\mathcal{W}$ generated
(or consisting rather) by all evaluation on $\mathcal{W}$ of elements
of $I$. Then $id^{H}(\mathcal{W}/\widehat{I})=id^{H}(W)+I$.
\end{cor}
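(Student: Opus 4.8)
The plan is to prove both inclusions of the equality $id^{H}(\mathcal{W}/\widehat{I})=id^{H}(W)+I$, working with multilinear $H$-polynomials throughout since every $H$-$T$-ideal is generated by its multilinear elements.

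For the inclusion $\supseteq$, first note that $id^{H}(\mathcal{W}/\widehat{I}) \supseteq id^{H}(\mathcal{W}) = id^{H}(W)$, since $\mathcal{W}/\widehat{I}$ is a quotient of $\mathcal{W}$ and $\mathcal{W}\sim_{H-PI}W$. It remains to check $I \subseteq id^{H}(\mathcal{W}/\widehat{I})$. Let $f \in I$ be multilinear. Under any $H$-homomorphism $\phi \colon F^{H}\langle X\rangle \to \mathcal{W}/\widehat{I}$, the value $\phi(f)$ is obtained by evaluating $f$ on elements of $\mathcal{W}/\widehat{I}$; lifting these to elements of $\mathcal{W}$, the value $\phi(f)$ is the image in $\mathcal{W}/\widehat{I}$ of an evaluation of $f \in I$ on $\mathcal{W}$, which by definition lies in $\widehat{I}$. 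Hence $\phi(f)=0$, so $f \in id^{H}(\mathcal{W}/\widehat{I})$.

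For the inclusion $\subseteq$, the main point is the following. Let $f = f(x_1,\dots,x_n)$ be a multilinear $H$-polynomial in $id^{H}(\mathcal{W}/\widehat{I})$; we must show $f \in id^{H}(W) + I$. Take the generators $x_1,\dots,x_t$ of $\mathcal{W}$ (enlarging $t$ if necessary so that $n$ variables can be accommodated — or better, evaluate on the natural generating images in $\mathcal{W}$ keeping track of the substitution). Evaluating $f$ on the images $\overline{x}_i$ of the free generators in $\mathcal{W}/\widehat{I}$ yields zero, which means the corresponding evaluation of $f$ in $\mathcal{W}$ lands inside $\widehat{I}$, i.e. it is a sum of evaluations of elements of $I$. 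Pulling everything back to the free algebra $F^{H}\langle X\rangle$: the element $f$, viewed in $F^{H}\langle X\rangle$, differs from an element of the $H$-$T$-ideal $I$ by an element of $id^{H}(\mathcal{W}) = id^{H}(W)$. Concretely, $f = \left(f - \sum_j g_j\right) + \sum_j g_j$ where each $g_j$ is an evaluation of an element of $I$ (hence $\sum_j g_j \in I$ since $I$ is an $H$-$T$-ideal), and $f - \sum_j g_j \in id^{H}(\mathcal{W})=id^{H}(W)$ because it vanishes under the relevant substitution into $\mathcal{W}$ and $f$ was multilinear so vanishing on the generating set forces membership in $id^{H}(W)$. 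This gives $f \in id^{H}(W)+I$.

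The step I expect to require the most care is the bookkeeping in the $\subseteq$ direction: one must be careful that "evaluation of $f$ on $\mathcal{W}$ lies in $\widehat{I}$" can be lifted to an honest identity in the free algebra $F^{H}\langle X\rangle$ of the form $f \equiv (\text{element of } I) \pmod{id^{H}(W)}$, using multilinearity of $f$ and the fact that it suffices to test identities of $\mathcal{W}$ (hence of $W$) on the finite generating set. Once the substitution of the free generators of $\mathcal{W}$ for the variables of $f$ is set up correctly — and one keeps track that $\widehat{I}$ consists precisely of finite sums of two-sided $H$-products of evaluations of $I$-elements, all of which are themselves in $I$ as $I$ is $T$-stable — the rest is formal. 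This corollary is the exact analogue of the classical statement in ordinary PI theory, and the same argument used there carries over verbatim to the $H$-module setting.
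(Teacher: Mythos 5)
Your $\supseteq$ direction is fine, and your $\subseteq$ direction works when $\mathcal{W}$ is the relatively free algebra on \emph{countably many} variables (each multilinear $f$ of degree $n$ can be evaluated on $n$ distinct free generators, and the argument goes through as you describe). But in the affine case --- which the statement singles out as the one of interest, and the one actually used in the paper --- there is a real gap. The number $t$ of generators of $\mathcal{W}$ is \emph{fixed} (it is tied to the degree of a Capelli identity satisfied by $W$), so ``enlarging $t$ if necessary'' is not available; and for a multilinear $f \in id^{H}(\mathcal{W}/\widehat{I})$ of degree $n>t$, you cannot substitute the free generators bijectively into the variables of $f$ and read off a relation $f \equiv (\text{element of }I) \pmod{id^{H}(W)}$ in $F^{H}\langle X\rangle$. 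Substituting with repetitions loses information and does not yield the conclusion.

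The missing ingredient is exactly \thmref{relativelly_affine}, which is why the statement is labelled a corollary. The clean argument: since $I$ is an $H$-$T$-ideal, for any $p\in I$ and any $g_i\in F^{H}\langle x_1,\dots,x_t\rangle$ one has $p(g_1,\dots,g_t)\in I\cap F^{H}\langle x_1,\dots,x_t\rangle$, so $\widehat{I}$ is precisely the image of $I\cap F^{H}\langle x_1,\dots,x_t\rangle$ in $\mathcal{W}$; using the $H$-endomorphism that kills $x_{t+1},x_{t+2},\dots$ one checks $\left(I+id^{H}(W)\right)\cap F^{H}\langle x_1,\dots,x_t\rangle = \left(I\cap F^{H}\langle x_1,\dots,x_t\rangle\right)+\left(id^{H}(W)\cap F^{H}\langle x_1,\dots,x_t\rangle\right)$, whence $\mathcal{W}/\widehat{I}\cong F^{H}\langle x_1,\dots,x_t\rangle /\left(\left(I+id^{H}(W)\right)\cap F^{H}\langle x_1,\dots,x_t\rangle\right)$. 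This exhibits $\mathcal{W}/\widehat{I}$ as the affine relatively free $H$-module algebra (on $t$ variables) of the $H$-$T$-ideal $I+id^{H}(W)$; since $I+id^{H}(W)\supseteq id^{H}(W)$ contains the same Capelli identity, \thmref{relativelly_affine} gives $id^{H}(\mathcal{W}/\widehat{I})=I+id^{H}(W)$. Your ``bookkeeping'' paragraph gestures at ``it suffices to test identities on the finite generating set,'' but that is a nontrivial statement (it is the content of \thmref{relativelly_affine}, proved via $S_n$-representation theory) rather than a formal manipulation, and it should be cited explicitly.
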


\subsection{Shirshov base\label{sub:Shirshov-base}}
\begin{defn}
Let $W$ be an affine algebra over $F$. Let $a_{1},...,a_{s}$ be
a generating set of $W$. Let $t$ be a positive integer and let $Y$
be the set of words in $a_{1},...,a_{s}$ of length $\leq t$. We
say that $W$ has a Shirshov base of length $t$ and of height $h$
if $W$ is spanned (over $F$) by elements of the form $y_{1}^{n_{1}}\cdots y_{l}^{n_{l}}$,
where $y_{i}\in Y$ and $l\leq h$. 
\end{defn}
The following fundamental theorem was proved by Shirshov. 
\begin{thm}
If an affine algebra $W$ has a multilinear PI of degree $t$, then
it has a Shirshov base of length $t$ and some height $h$ where $h$
depends only on $t$ and the number of generators of $W$. 
\end{thm}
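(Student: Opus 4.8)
The plan is to reproduce Shirshov's original combinatorial argument in three stages: a reduction to a multilinear identity, a ``word rewriting'' step showing that $W$ is spanned by non-decomposable words, and a purely combinatorial statement bounding the height of such words.

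First I would reduce to the case where $W$ satisfies a \emph{multilinear} identity of degree $t$, say $x_1 x_2 \cdots x_t = \sum_{\sigma \neq \mathrm{id}} \alpha_\sigma\, x_{\sigma(1)} \cdots x_{\sigma(t)}$; in characteristic zero this is routine (multilinearize the given PI and, if necessary, pad its degree up to $t$). Fix the generators $a_1, \dots, a_s$ and a linear order on them, inducing the lexicographic order $<_{\mathrm{lex}}$ on words of a fixed length. Call a word $w$ in the $a_i$ \emph{$t$-divisible} if it can be cut as $w = v\, w_1 w_2 \cdots w_t\, u$ with $v,u$ possibly empty, each $w_i$ nonempty, and $w_{\sigma(1)} w_{\sigma(2)} \cdots w_{\sigma(t)} <_{\mathrm{lex}} w_1 w_2 \cdots w_t$ for every nonidentity $\sigma \in S_t$ (so $w_1 \cdots w_t$ is the lex-largest rearrangement of its blocks).

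Second, I would show $W$ is spanned by the images of the non-$t$-divisible words. If $w = v\, w_1 \cdots w_t\, u$ is $t$-divisible, substitute $x_i \mapsto w_i$ in the identity: then $w_1\cdots w_t$, hence $w$, is a linear combination of words of the \emph{same length} that are strictly smaller in $<_{\mathrm{lex}}$ (prepending $v$ and appending $u$ preserves strict lex-inequality on equal-length words). Since for each fixed length $n$ there are only finitely many words and $<_{\mathrm{lex}}$ totally orders them, descending induction on $<_{\mathrm{lex}}$ shows that modulo $id^H(W)$ every length-$n$ word is a combination of non-$t$-divisible length-$n$ words; thus $W$ is spanned by non-$t$-divisible words.

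Third — and this is the heart of the matter, and the main obstacle — is the purely combinatorial claim: there is $h = h(s,t)$, depending only on $s$ and $t$, such that every non-$t$-divisible word over an $s$-letter alphabet can be written as $y_1^{n_1} y_2^{n_2} \cdots y_l^{n_l}$ with $l \le h$ and each $|y_i| \le t$. The idea: along a non-$t$-divisible word $w$ one cannot find $t$ positions at which the length-$t$ subwords starting there are strictly lex-decreasing (that would exhibit a $t$-divisible factor), so by Dilworth's theorem this sequence of length-$t$ subwords is a union of at most $t-1$ nondecreasing runs; since there are at most $s^t$ distinct length-$t$ words, a sufficiently long nondecreasing run stabilizes and hence is eventually periodic with period $\le t$, producing a factor $y^n$ with $|y| \le t$, while the number of ``transition'' regions between such periodic stretches is bounded in terms of $s$ and $t$. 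Carrying this out with explicit estimates on $h(s,t)$ is precisely the technical content of Shirshov's theorem (and the origin of the extensive literature refining the bound), so I would either execute Shirshov's original counting or cite it. Combining the second and third stages with $Y$ taken to be all words in $a_1,\dots,a_s$ of length $\le t$ gives a Shirshov base of length $t$ and height $h = h(s,t)$, as claimed.
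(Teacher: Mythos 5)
The paper states this as Shirshov's classical theorem and gives no proof, so there is nothing in the paper itself to compare against; it is cited as known background. Your outline follows the standard route: reduce to a single multilinear identity, show by a lex-descent rewriting that the algebra is spanned (modulo the identity) by non-$t$-decomposable words, and then invoke Shirshov's combinatorial lemma to bound the height of non-$t$-decomposable words. Your stages one and two are correct as written; the entire technical content is stage three, which you explicitly defer to the literature --- fair enough, since that combinatorial lemma essentially \emph{is} Shirshov's height theorem.

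One inaccuracy in the stage-three sketch is worth flagging. The claim that $t$ positions with strictly lex-decreasing \emph{length-$t$ windows} would exhibit a $t$-decomposable factor does not work as stated: the windows may overlap, and the factors one actually needs --- the blocks $w[p_i,p_{i+1})$ between consecutive chosen positions --- can be shorter than $t$, so the lex-comparison of windows does not transfer to the lex-maximality of $w[p_1,p_2)\,w[p_2,p_3)\cdots$ among its rearrangements. The standard argument compares the \emph{tails} (suffixes) of $w$: a strictly decreasing chain of $t$ tails at positions $p_1<\cdots<p_t$ does give a $t$-decomposable factor via those blocks, and then Dilworth's theorem together with the periodicity forced on a long increasing chain of tails yields the height bound (this is the genuinely hard part, and where the bound $h(s,t)$ comes from). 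If you were to execute the argument rather than cite it, you would want to replace length-$t$ windows by tails throughout.
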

In fact, there is an important special case where we can get even
``closer'' to representability.
\begin{thm}
\label{thm:finite-module-1} Let $C$ be a commutative algebra over
$F$ and let $W=C\left\langle a_{1},...,a_{s}\right\rangle $. Suppose
$W$ has a Shirshov base. If for every $i=1,\dots,s$, the element
$a_{i}$ is integral over $C$, then $W$ is a finite module over
$C$. 
\end{thm}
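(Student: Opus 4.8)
The plan is to combine the Shirshov base structure with the integrality hypotheses to produce a finite spanning set of $W$ as a $C$-module. Recall that a Shirshov base of length $t$ and height $h$ means $W$ is spanned over $F$ (hence over $C$) by the elements $y_1^{n_1}\cdots y_l^{n_l}$ with $l\le h$ and each $y_i$ a word in $a_1,\dots,a_s$ of length at most $t$. There are only finitely many such words $y$, so it suffices to show that for each fixed word $y$, all the powers $y^n$ ($n\ge 0$) lie in a finitely generated $C$-submodule of $W$. If that is established, then the products $y_1^{n_1}\cdots y_l^{n_l}$ with $l\le h$ range over a finite $C$-linear combination of products of elements from finitely many finitely generated $C$-modules, which is again a finitely generated $C$-module, and we are done.

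First I would show that each generator $a_i$ being integral over $C$ forces each word $y=a_{i_1}\cdots a_{i_k}$ (of length $k\le t$) to be integral over $C$ as well. This is the standard fact that the integral elements of an algebra over a commutative ring form a subring when they commute with $C$ appropriately — but here $W$ need not be commutative, so one must be a little careful. The correct statement to use is: if $a$ is integral over $C$, then the $C$-subalgebra $C[a]$ generated by $a$ is a finitely generated $C$-module (spanned by $1,a,\dots,a^{N-1}$ if $a$ satisfies a monic degree-$N$ relation). More generally, if $a_{i_1},\dots,a_{i_k}$ are each integral, I would argue that the $C$-subalgebra they generate together is a finitely generated $C$-module: it is spanned by all monomials in the $a_{i_j}$ in which each $a_{i_j}$ appears with bounded exponent, and since there are finitely many generators this is a finite spanning set. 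In particular the single element $y = a_{i_1}\cdots a_{i_k}$ lies in this finitely generated $C$-module, and so do all its powers $y^n$, since the module is closed under multiplication.

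With that in hand, fix the finitely many words $y^{(1)},\dots,y^{(M)}$ of length $\le t$ (these are all the possible $y_i$ appearing in the Shirshov base). The subalgebra $C[y^{(1)},\dots,y^{(M)}] = C[a_1,\dots,a_s] = W$ is, by the previous paragraph, a finitely generated $C$-module. Actually this already gives the conclusion directly, bypassing the Shirshov base entirely for the ``finite module'' claim — but the role of the Shirshov base is exactly to guarantee $W$ is generated as an algebra by the $a_i$ (which is the hypothesis $W=C\langle a_1,\dots,a_s\rangle$) together with the fact that words of bounded length suffice, which is what lets us conclude the integral-closure argument terminates. So the clean write-up is: the $a_i$ are integral over $C$, hence the $C$-algebra $C\langle a_1,\dots,a_s\rangle$ is spanned over $C$ by monomials $a_{i_1}^{e_1}\cdots a_{i_r}^{e_r}$ with bounded exponents and, crucially, bounded length — the length bound being supplied by the Shirshov height theorem, which lets any long monomial be rewritten as a $C$-combination of products $y_1^{n_1}\cdots y_l^{n_l}$ with $l\le h$ and $y_i$ of length $\le t$, and then each $y_i^{n_i}$ reduced modulo the integrality relation of $y_i$.

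The main obstacle I anticipate is the noncommutativity: the usual ``sums and products of integral elements are integral'' lemma is a commutative-algebra statement, and its proof (via the determinant trick / Cayley–Hamilton on a finitely generated faithful module) needs the elements to act $C$-linearly on a common finitely generated module. Here the resolution is precisely the Shirshov base: it provides, a priori, a finite $F$-spanning set of $W$ of the shape $y_1^{n_1}\cdots y_l^{n_l}$, and after using the monic relations $y_i^{N_i} \in \sum_{j<N_i} C\, y_i^j$ to cap each exponent $n_i < N_i$, only finitely many such products remain. That finite set spans $W$ over $C$. I would take care to phrase the exponent-reduction step carefully, since reducing one $y_i^{n_i}$ introduces lower powers that still sit between the other factors, so one argues by a suitable induction (e.g. on the total degree $\sum n_i$, or on the lexicographically ordered exponent vector) to see the process terminates with a finite spanning set. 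This termination argument, while not deep, is the one place where some genuine care is needed.
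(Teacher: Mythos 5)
The critical gap is the claim in your second paragraph that integrality of the generators over $C$ makes the $C$-subalgebra they generate a finite $C$-module, spanned by ``all monomials in the $a_{i_j}$ in which each $a_{i_j}$ appears with bounded exponent.'' In a noncommutative algebra this set is not finite and the claim is false: bounding the exponent of each maximal run of a single letter does not bound monomial length, as $ab, abab, ababab, \dots$ all have exponent $1$ on every letter. Integrality genuinely fails to propagate to words here. Take $W = F\langle a,b\rangle/(a^2-a,\,b^2-b)$, which (after $u=2a-1$, $v=2b-1$) is the group algebra $F[C_2\ast C_2]=F[D_\infty]$ of the infinite dihedral group. With $C=F$ this $W$ is affine, is PI (it is a rank-$2$ module over $F[\mathbb{Z}]$, hence embeds in $M_2(F[\mathbb{Z}])$), and therefore has a Shirshov base; both generators are integral, being idempotent; yet $W$ is infinite-dimensional over $F$, so the conclusion of the theorem fails for it under only the ``generators are integral'' hypothesis. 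Consequently your last reduction step --- ``each $y_i^{n_i}$ reduced modulo the integrality relation of $y_i$'' --- presupposes exactly the monic relations for the Shirshov words $y_i$ that your earlier paragraph was supposed to supply, and the argument is circular where it is not simply wrong.

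What the theorem actually needs, and what the paper's application quietly arranges, is that every word $y$ of length $\le t$ (every element of the Shirshov base $Y$) be integral over $C$, not merely the generators $a_i$. With that hypothesis your closing reduction is sound: $Y$ is finite, each $y\in Y$ satisfies a monic relation of some degree $N_y$ over $C$, so each $y^n$ lies in $C\cdot 1 + C y + \cdots + C y^{N_y-1}$, and an induction on $\sum n_i$ (or on the exponent vector lexicographically) collapses the Shirshov spanning set $\{y_1^{n_1}\cdots y_l^{n_l}: l\le h,\ y_i\in Y\}$ to the finite subset with all $n_i < N_{y_i}$, which is a finite $C$-spanning set of $W$. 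In Section~\ref{sec:Representable-Spaces} the paper defines $C$ precisely as the $F$-algebra generated by the characteristic values of \emph{all} elements of the Shirshov base $\mathcal B$, which is what makes each $y\in\mathcal B$ integral; the integrality hypothesis in the displayed statement should be read as applying to the Shirshov words. The paper itself gives no proof of this theorem (it is a citation to Kanel-Belov--Rowen), so there is no in-paper argument to compare against, but any correct proof must start from integrality of the Shirshov words rather than of the generators alone.
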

If in addition, our commutative algebra $C$ is Noetherian and unital
we reach our goal, as the next theorem shows.
\begin{thm}[Beidar \cite{Beidar1986}]
 Let $W$ be an algebra and $C$ be a unital commutative Noetherian
$F$-algebra. If $W$ is a finite module over $C$, then $W$ is representable. 
\end{thm}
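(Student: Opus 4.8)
The plan is to reduce, by a chain of formal steps, to the representability of a concrete subalgebra of a matrix ring over a commutative ring, the only real work being the last passage. First I would make two harmless reductions. Replacing $W$ by its unital hull $W^{\#}=W\oplus C$, with multiplication $(w,c)(w',c')=(ww'+cw'+c'w,\,cc')$, we may assume $W$ is unital and contains $C$ in its center, and $W^{\#}$ is still a finite $C$-module; any embedding of $W^{\#}$ restricts to one of $W$. Then the left regular representation $W\hookrightarrow E:=\mathrm{End}_{C}(W)$, $w\mapsto(x\mapsto wx)$, is injective because $1\in W$, and since $W$ is a finite module over the Noetherian ring $C$ so is $E=\mathrm{Hom}_{C}(W,W)$. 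Hence it suffices to show that $\mathrm{End}_{C}(M)$ is representable for every finite module $M$ over a commutative Noetherian $F$-algebra $C$.

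Next, fix a free presentation $C^{n}\xrightarrow{\pi}M\to 0$ and put $K=\ker\pi$, which is finitely generated because $C$ is Noetherian. Let $\mathcal{A}=\{a\in M_{n}(C):a(K)\subseteq K\}$, a $C$-subalgebra of $M_{n}(C)$. Every $a\in\mathcal{A}$ descends to an endomorphism of $C^{n}/K=M$, giving a ring homomorphism $\mathcal{A}\to E$; it is onto, because a lift through the projective module $C^{n}$ of any $\phi\in E$ automatically stabilizes $K$, and its kernel is the ideal $L=\{a\in M_{n}(C):\mathrm{im}\,a\subseteq K\}$ of $\mathcal{A}$. Thus $E\cong\mathcal{A}/L$, with $\mathcal{A}$ manifestly representable. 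Since $\mathcal{A}$ is module-finite over the Noetherian ring $C$, it is finitely presented as a $C$-algebra; fix algebra generators $a_{1},\dots,a_{r}$ of $\mathcal{A}$ and noncommutative polynomials $p_{1},\dots,p_{s}$ over $C$ presenting $E\cong\mathcal{A}/L$, so that each $p_{j}(a_{1},\dots,a_{r})$ lies in $L$ though it need not vanish in $M_{n}(C)$.

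The heart of the argument, and the step I expect to be the main obstacle, is to promote the surjection $\mathcal{A}\twoheadrightarrow E$ to a genuine embedding of $E$ into a matrix ring over a commutative Noetherian $F$-algebra. Suppose one could find a faithfully flat commutative Noetherian extension $C'\supseteq C$ and elements $\ell_{1},\dots,\ell_{r}$ of the image of $L\otimes_{C}C'$ in $M_{n}(C')$ with $p_{j}(a_{1}+\ell_{1},\dots,a_{r}+\ell_{r})=0$ in $M_{n}(C')$ for all $j$. Then $X_{i}\mapsto a_{i}+\ell_{i}$ defines a $C'$-algebra map $\alpha\colon E\otimes_{C}C'\to M_{n}(C')$; composing $\alpha$ with the quotient $\mathcal{A}\otimes_{C}C'\twoheadrightarrow E\otimes_{C}C'$ (each $a_{i}+\ell_{i}$ lies in $\mathcal{A}\otimes_{C}C'$ and maps to the image of $a_{i}$, as $\ell_{i}\mapsto 0$) returns the identity of $E\otimes_{C}C'$, so $\alpha$ is injective, and faithful flatness gives $E\hookrightarrow E\otimes_{C}C'\hookrightarrow M_{n}(C')$. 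The substantive task is therefore to construct such a $C'$: one adjoins to $C$ polynomial indeterminates $t_{ik}$ (one for each generator $a_{i}$ and each of finitely many $C$-module generators of $L$), forms the generic perturbations $a_{i}+\sum_{k}t_{ik}\ell_{k}$, and then must arrange the relations $p_{j}=0$ to hold while retaining Noetherianity and flatness over $C$; the tension between imposing the relations (which wants a quotient) and staying flat over $C$ (which forbids one) is exactly what makes this delicate, and it is here that the faithfulness of $M$ over $C$ and a careful, possibly iterated, choice of extension enter — I would carry out this step following \cite{Beidar1986}. Finally, from $W\hookrightarrow M_{n}(C')$ one may, if an embedding over a field extension of $F$ is desired, localize $C'$ at a suitable prime and pass to the residue field.
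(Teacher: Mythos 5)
The proposal is incomplete and has a genuine gap precisely where the theorem's content lies. Your reductions are fine: pass to the unital hull, embed $W$ in $E=\mathrm{End}_{C}(W)$, and present $E$ as $\mathcal{A}/L$ with $\mathcal{A}=\{a\in M_{n}(C):aK\subseteq K\}$. But the step you yourself flag as the ``heart of the argument'' --- finding a faithfully flat Noetherian $C'\supseteq C$ and corrections $\ell_{i}\in L\otimes_{C}C'$ killing the $p_{j}$ --- is exactly where everything happens, and you do not do it; you defer to \cite{Beidar1986}. Moreover, what you are asking for is a $C'$-algebra section of $\mathcal{A}\otimes_{C}C'\twoheadrightarrow E\otimes_{C}C'$, i.e.\ a splitting of the square-zero-type extension $0\to L\otimes C'\to\mathcal{A}\otimes C'\to E\otimes C'\to 0$. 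Such splittings are genuinely obstructed, and the tension you note between imposing relations (which wants a quotient of $C[\underline{t}]$) and preserving flatness over $C$ (which forbids one) is a real obstacle, not a detail to be absorbed by ``a careful, possibly iterated, choice of extension''; I see no reason a faithfully flat Noetherian $C'$ with these properties should exist. The closing move of localizing $C'$ at a prime and passing to the residue field also lacks the argument that the composite remains injective on $W$.

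For contrast, the paper cites Beidar and then proves the $H$-module generalization (Theorem \ref{thm:representable-over-commutattive-1}); the case of trivial $H$ is a proof of the Beidar statement, and it runs along quite different lines: unital hull with $C$ central; Noetherian induction on ideals to reduce to the irreducible case (no two nonzero ideals meeting in zero); the observation that in an irreducible piece every non-nilpotent of $C$ is a non-zero-divisor in $W$; localization at the non-nilpotents so that $C$ becomes local Noetherian with nilpotent maximal ideal; extraction of a coefficient field $K\cong C/J(C)$; and the conclusion that $W$ is finite-dimensional over $K$, whence trivially representable via $W\hookrightarrow\mathrm{End}_{K}(W)$. That reduction to finite dimensionality over a field is precisely what your scheme tries to sidestep by building the matrix embedding directly, but without the lifting lemma the scheme is stuck at the hard step, and I do not think the lifting is achievable in the form you have set it up.
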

Following the proof in \cite{AlexeiKanel-Belov} (Theorem 1.6.22),
it is easy to generalize this theorem to $H$-module algebras:
\begin{thm}
\label{thm:representable-over-commutattive-1} Let $W$ be an $H$-module
$C$-algebra, where $C$ is a unital commutative Noetherian $F$-algebra
(so in particular $h\cdot(cw)=c(h\cdot w)$ for $c\in C,h\in H$ and
$w\in W$). If $W$ is a finite module over $C$, then $W$ is $H$-representable
(i.e. there is a field extension $K$ of $F$ and an $H$-module $K$-algebra
$A$, which is finite dimensional over $K$, such that $W$ is $H$-module
$F$-subalgebra of $A$).\end{thm}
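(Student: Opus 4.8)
The plan is to avoid re-proving Beidar's theorem $H$-equivariantly and instead to apply it, as a black box, to the smash product $W\#H$, and then transport the resulting representability back to $W$.

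First I would reduce to the case that $W$ is unital. Replace $W$ by $W^{1}=W\oplus C$, with $C$ acting coordinate-wise, multiplication $(w,c)(w',c')=(ww'+cw'+c'w,\,cc')$, unit $(0,1_{C})$, and $H$ acting by $h\cdot(w,c)=(h\cdot w,\,\epsilon(h)c)$. Then $W^{1}$ is again an $H$-module $C$-algebra which is finite over $C$, it still satisfies $h\cdot(cw)=c(h\cdot w)$, it is unital with $h\cdot 1_{W^{1}}=\epsilon(h)1_{W^{1}}$, and $w\mapsto(w,0)$ is an $H$-equivariant (non-unital) algebra embedding $W\hookrightarrow W^{1}$. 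So from now on I assume $W$ is unital (and, replacing $C$ by its image, faithful over $C$).

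The crucial observation is that combining $h\cdot(cw)=c(h\cdot w)$ with $h\cdot 1_{W}=\epsilon(h)1_{W}$ yields $h\cdot(c\,1_{W})=\epsilon(h)\,c\,1_{W}$ for every $c\in C$. Consequently, in the smash product $A_{0}:=W\#H$ --- underlying $F$-space $W\otimes_{F}H$, product $(w\#h)(w'\#h')=w(h_{(1)}\cdot w')\#h_{(2)}h'$, unit $1_{W}\#1_{H}$ --- the image of $C$ under $c\mapsto c\,1_{W}\#1_{H}$ lies in the center, since in $(w\#h)(c\,1_{W}\#1_{H})$ the term $h_{(1)}\cdot(c\,1_{W})$ collapses to $\epsilon(h_{(1)})\,c\,1_{W}$. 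Because $W$ is a finite $C$-module and $\dim_{F}H<\infty$, the algebra $A_{0}$ is a finite module over the unital commutative Noetherian $F$-algebra $C$; so by Beidar's theorem there are a field extension $K$ of $F$ and an injective $F$-algebra homomorphism $\iota\colon A_{0}\hookrightarrow M_{N}(K)$ for some $N$.

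Finally I would put $A:=M_{N}(K)$ and define, for $h\in H$ and $a\in A$,
\[
h\rightharpoonup a:=\iota(1_{W}\#h_{(1)})\,a\,\iota(1_{W}\#S(h_{(2)})),
\]
summation over Sweedler components being understood. A routine check using the counit and antipode identities shows that $\rightharpoonup$ makes $A$ into an $H$-module algebra that is finite dimensional over $K$, and that the composite of $W\hookrightarrow W^{1}\hookrightarrow A_{0}$ with $\iota$, i.e.\ $w\mapsto\iota(w\#1_{H})$, is an injective algebra homomorphism; it is $H$-equivariant because inside $A_{0}$ one computes $(1_{W}\#h_{(1)})(w\#1_{H})(1_{W}\#S(h_{(2)}))=(h\cdot w)\#1_{H}$. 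This exhibits $W$ as an $H$-module $F$-subalgebra of the finite dimensional $H$-module $K$-algebra $A$, which is the claim. The only delicate point is the centrality of $C$ in $W\#H$ --- precisely the step that uses the hypothesis $h\cdot(cw)=c(h\cdot w)$ --- together with the bookkeeping that $\rightharpoonup$ really defines an $H$-module-algebra structure and that $\iota$ really becomes equivariant; I do not expect a genuine obstacle, the substance of the lemma being exactly that $C$-linearity of the $H$-action pushes $C$ into the center of the smash product, where Beidar's theorem applies verbatim.
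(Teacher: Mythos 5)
Your proof is correct, and the route you take is genuinely different from the paper's. The paper mimics the Kanel--Belov proof of Beidar's theorem step by step, but carrying an $H$-equivariant structure along at every stage: first it adjoins $1$, then performs a Noetherian induction on \emph{$H$-ideals} to reduce to the $H$-irreducible case, then shows non-nilpotent elements of $C$ are non-zero-divisors and localizes $W$ at the multiplicative set $S$ of such elements (with $H$ acting on $S^{-1}W$ by $h\cdot(s^{-1}w)=s^{-1}(h\cdot w)$), reduces $C$ to a local Noetherian ring with nilpotent maximal ideal containing a copy of $K\cong C/J(C)$, and concludes that $S^{-1}W$ is finite over $K$. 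Your argument instead \emph{linearizes} the $H$-action by passing to the smash product $W\#H$, observes that the $C$-linearity of the $H$-action is exactly what forces $C$ to remain central in $W\#H$, applies the \emph{ordinary} Beidar theorem as a black box to the finite $C$-module $W\#H$, and then reconstructs an $H$-module-algebra structure on the target by the inner action $a\mapsto\iota(1\#h_{(1)})\,a\,\iota(1\#S(h_{(2)}))$, which restricts to the original action on the image of $W$. What your approach buys is that no part of the Beidar machinery (Noetherian induction, localization, the structure of $C$) has to be re-examined for $H$-compatibility; the price is the Hopf-algebraic bookkeeping with the antipode and the verification that the induced action on the target algebra really is an $H$-module algebra structure, both of which you correctly identify and which do go through. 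The computations you flag as routine---centrality of $C$ in $W\#H$, the identity $(1\#h_{(1)})(w\#1)(1\#S(h_{(2)}))=(h\cdot w)\#1$, and the module-algebra axioms for the conjugation action (which only use $\varepsilon$, $S$, and coassociativity, not the semisimplicity of $H$)---all check out, as does the reduction to the unital case via $W^1=W\oplus C$.
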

\selectlanguage{american}%
\begin{proof}
If $W$ does not posses an identity element we may replace $W$ by
$W\oplus C1$ ($H$ acts on $C1$ by $h\cdot c1=c\epsilon(h)1$).
Moreover, the map $\pi:C\to Z(W)$ given by $c\to c1$ is a homomorphism.
The image of $\pi$ is commutative unital Noetherian $F$-algebra.
Thus we may also assume that $C$ is embedded in the center of $W$. 

Next, if $I$ and $J$ are zero intersecting $H$-ideals of $W$,
we have $W\hookrightarrow W/I\times W/J$ is an $H$-module $C$-algebras
embedding. By Noetherian induction for $H$-ideals, we obtain that
$W$ is $H$-embedded in a finite product of Noetherian $H$-module
$C$-algebras each having no zero intersecting $H$-ideals ($H$-irreducible)
apart the zero ideals. Therefore, we may also assume $W$ is $H$-irreducible.

Suppose $z\in C$ is non nilpotent. Since $W$ is Noetherian there
is some $k$ for whcih $ann_{W}(z^{k})=ann_{W}(z^{k+1})=\cdots$.
Hence, $ann_{W}(z^{k})\cap z^{k}W=0$ (indeed, if $x=z^{k}w\in ann_{W}(z^{k})$,
then $z^{2k}w=0\Rightarrow w\in ann_{W}(z^{2k})=ann_{W}(z^{k})\Rightarrow x=0$).
Since $ann_{W}(z^{k})$ and $zW$ are $H$-ideals (recall that $h\cdot(zw)=z(h\cdot w)$)
and $W$ is an $H$-irreducible, we must conclude $ann_{W}(z^{k})=0$.
In other words, $z$ is not zero divisor in $W$.

Denote by $S$ all the non-nilpotent elements of $C$. By the previous
paragraph, $W$ $H$-embeds into $W_{1}=S^{-1}W$ (the $H$-action
is given by $h\cdot(s^{-1}w)=s^{-1}h(w)$). $C_{1}=S^{-1}C$ is Noetherian
and local (see Lemma 1.6.27 in \cite{AlexeiKanel-Belov}) with $J(C)$
equals to a nilpotent maximal ideal. Hence, by Lemma 16.25 in \cite{AlexeiKanel-Belov}
$C$ contains a field $K$ with the property $K\backsimeq C/J(C)$.

Denote by $k$ the nilpotency index of $J(C)$. So we have 
\[
J(C)\supseteq J(C)^{2}\supseteq\cdots\supseteq J(C)^{k-1}.
\]
Since $J(C)^{i}/J(C)^{i+1}$ is finite over $C$ (since $C$ is Noetherian),
it is also finite over $C/J(C)=K$. Hence $C$ is finite over $K$.
The theorem follows because $W_{1}$ is finite over $C$. 
\end{proof}
\selectlanguage{english}%

\section{Relatively free $H$-module algebra of a finite dimensional $H$-module
algebra\label{sub:Relatively-free--module}}

Suppose $A$ is a finite dimensional $H$-module $F$-algebra and
$\mathcal{A}$ is its corresponding affine relatively free $H$-module
algebra which is $H$-generated by the variables $x_{1},...,x_{t}$.
Suppose further that $a_{1},...,a_{l}$ is an $F$-basis for $A$
and consider the map $\phi:\mathcal{A}\to A\otimes_{F}K$, where $K=F\left(\left\{ \lambda_{i,j}|\, i=1,...,t;\,\, j=1...l\right\} \right)$,
induced by 
\[
x_{i}\to\sum_{k=1}^{l}\lambda_{i,j}a_{j}.
\]
It is easy to check that if we denote by $\mathcal{A}^{\prime}$ the
image of this $H$-map we will obtain that $\mathcal{A}$ and $\mathcal{A}^{\prime}$
are $H$-isomorphic. Therefore, we will abuse notation and denote
also the image by $\mathcal{A}$.

Suppose now that $A=A_{1}\times\cdots\times A_{s}$ is a product of
$H$-basic algebras. Denote by $R_{1},...,R_{s}$ the $H$-invariant
semisimple part of $A_{1},...,A_{s}$ respectively. We may embed $R_{i}$
into $End_{F}(R_{i})$ ($F$-algebras embedding) and define 
\[
tr:R_{1}\times\cdots\times R_{s}\to F^{\times s}
\]
by 
\[
tr(a_{1},...,a_{s})=(tr_{End_{F}(R_{!})}(a_{1}),...,tr_{End_{F}(R_{s})}(a_{s})).
\]
Furthermore, $tr$ can be extended to a function $A\to F^{\times s}$
by declaring that the trace of a radical element is $(0,...,0)$.
Since $F^{\times s}$ embeds into $R_{1}\times\cdots\times R_{s}$,
it acts on $A$. Finally, notice that each semisimple $a\in A$ satisfies
a Cayley-Hemilton identity of degree $d=\max\{d_{1}=\dim_{F}R_{1},...,d_{s}=\dim_{F}R_{s}\}$. 
\begin{lem}
\label{lem:traces_and_alternation}Suppose $f(x_{1},...,x_{d},Y)$
is an $H$-Kemer polynomial of rank at least $\mu_{A}+1$ and $\{x_{1},...,x_{d}\}$
is one of the small sets, then: 
\[
tr(a_{0})f(a_{1},...,a_{d},\bar{Y})=\sum_{k=1}^{d}f(a_{1},...,a_{k-1},a_{0}a_{k},a_{k+1},...,a_{d},\bar{Y}),
\]
where $a_{0},...,a_{d}\in A$ and $\bar{Y}$ is some evaluation of
the variables of $Y$ by elements of $A$.\end{lem}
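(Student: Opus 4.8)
The plan is to reduce the identity to a purely linear-algebraic statement about traces on the semisimple part $\overline{A}$, and then use the alternating property of the Kemer polynomial together with the fact that only semisimple evaluations can occur in a small set. First I would observe that since $f$ is multilinear and both sides are multilinear in $a_0,\dots,a_d$, it suffices to prove the equation when each of $a_0,a_1,\dots,a_d$ is a basis element taken either from one of the $H$-simple components $A_1,\dots,A_s$ of $\overline{A}$ or from $J(A)$. The right-hand side is, up to sign, $\mathrm{Alt}$ applied to the set $\{x_1,\dots,x_d\}$ after the substitution $x_k\mapsto a_0 a_k$ is ``folded in''; more precisely it is obtained from $f$ by replacing one small-set slot with the product $a_0 a_k$ and summing over which slot. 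Because $f$ is $H$-Kemer of rank $\geq \mu_A+1$, Kemer's Lemma~1 / the analysis preceding \lemref{Kemer's_Lemma_2} tells us that on any nonzero evaluation the $d$ variables $x_1,\dots,x_d$ of a small set must all receive semisimple values, and in fact values spanning (a conjugate of) all of $\overline{A}$ within a single simple component pattern — so the whole identity is really an identity ``inside'' the semisimple algebra, where the classical Cayley–Hamilton / trace identity lives.

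Next I would handle the two genuinely different cases for $a_0$. If $a_0\in J(A)$, then $\mathrm{tr}(a_0)=(0,\dots,0)$ by definition, so the left side is $0$; on the right side, each term $f(a_1,\dots,a_0 a_k,\dots,a_d,\bar Y)$ has a small-set slot evaluated at the radical element $a_0 a_k$ (radical times anything is radical), and since a small set cannot tolerate a radical evaluation without the polynomial vanishing, every term is $0$. Hence both sides vanish. If instead $a_0$ is semisimple, say $a_0\in A_j$, then for the left side to be nonzero we need the evaluation to land in component $A_j$ consistently, and I can reduce to the statement purely inside $A_j\cong M_{d_j}(D_j)$ (or after extending scalars, inside $\mathrm{End}_F(R_j)$), where $\mathrm{tr}$ is the ordinary matrix trace. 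There the desired formula
\[
\mathrm{tr}(a_0)\,g(a_1,\dots,a_d)=\sum_{k=1}^{d} g(a_1,\dots,a_0 a_k,\dots,a_d)
\]
for any multilinear alternating-in-all-$d$-slots expression $g$ on a $d$-dimensional space is the standard polarized Cayley–Hamilton identity: an alternating multilinear function of $d$ arguments in a $d$-dimensional module is, up to scalar, the determinant, and the identity $\sum_k \det(a_1,\dots,a_0a_k,\dots,a_d)=\mathrm{tr}(a_0)\det(a_1,\dots,a_d)$ is exactly the derivative/trace formula for determinants. The only subtlety is that $d=\max_i d_i$ may exceed $d_j$ for the particular component $A_j$; but then alternation on $d$ slots forces the value to be $0$ on $A_j$ (one cannot have $d$ linearly independent semisimple arguments in a $d_j<d$ dimensional space without a radical among them), and simultaneously $\mathrm{tr}(a_0)$ times that zero is zero, and each summand on the right also has too many alternating semisimple arguments in $A_j$, so it vanishes as well — so the identity holds trivially $0=0$ for such components, and the content is entirely in the components with $d_i=d$.

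The main obstacle I anticipate is bookkeeping the interaction between the $H$-action and the semisimple/radical decomposition: I must be sure that ``replacing $x_k$ by $a_0x_k$'' is a legitimate operation that keeps us inside the relatively free algebra $\mathcal{A}$ (it is, since $\mathcal{A}=\overline{A}\ast F^H\langle\dots\rangle$ modulo $id^H(A)$ contains $\overline{A}$ and hence multiplication by $a_0\in\overline{A}$ is available) and that the trace function, defined via the embedding $R_i\hookrightarrow \mathrm{End}_F(R_i)$, is the one for which the polarized Cayley–Hamilton identity holds after the scalar extension $K$. Concretely the argument will run: (i) reduce to basis substitutions by multilinearity; (ii) dispose of the case $a_0\in J$ and of components with $d_i<d$ by the alternation/radical vanishing principle established in Section~\ref{sec:The_index_of_finite_dimensional_algebra}; (iii) in each component $A_j$ with $d_j=d$, use that the restriction of $f$ to semisimple substitutions from $A_j$ in the fixed small set is an alternating multilinear form in $d=\dim$ arguments, hence proportional to the determinant, and invoke the classical identity $\mathrm{tr}(a_0)\det = \sum_k \det(\dots,a_0a_k,\dots)$; (iv) sum over $j$ using that $\mathrm{tr}$ on $\overline{A}=\prod R_i$ is the tuple of component traces and that distinct-component cross terms vanish in $f$ (semisimple values from different $H$-simple components cannot sit adjacent without a radical separator). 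Assembling these gives the stated equality over $A$, hence over $\mathcal{A}\otimes_F K$.
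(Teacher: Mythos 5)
The paper does not give its own argument here; it delegates to Proposition~10.5 of \cite{Aljadeff}, so there is no internal proof to compare against. Your proof is the standard argument one would give (and what the cited reference proves), and it is correct in substance: reduce by multilinearity to basis substitutions in the components of $A=A_1\times\cdots\times A_s$, use that a small set of an $H$-Kemer polynomial of rank $\geq\mu_A+1$ cannot tolerate a radical entry on a nonvanishing evaluation (forced by the radical count: each of the $n_{A_j}-1$ big sets must consume one of the at most $n_{A_j}-1$ available radicals), and then apply the polarized Cayley--Hamilton / ``derivative of the determinant'' identity $\sum_k g(v_1,\dots,Lv_k,\dots,v_d)=\mathrm{tr}(L)\,g(v_1,\dots,v_d)$ to the alternating $d$-linear form that $f$ induces on the $d$-dimensional space $R_j$.

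Two small imprecisions worth correcting, neither of which damages the argument. First, $A_j$ is $H$-basic, not $H$-simple: it is not $M_{d_j}(D_j)$, and its semisimple part $R_j$ is in general a \emph{product} of $H$-simple algebras. What you actually want is exactly your parenthetical: view $R_j\hookrightarrow\mathrm{End}_F(R_j)$ via the left regular representation. Second, $\mathrm{tr}$ as defined in the paper is the trace of left multiplication on $R_j$ (the regular-representation trace), not the ``ordinary matrix trace'' of $a_0$; these differ by a factor in general (for $M_2(F)$ the regular trace of $E_{11}$ is $2$, not $1$). This is harmless because the polarized Cayley--Hamilton identity is a statement about an arbitrary linear operator $L$ on a $d$-dimensional space and its operator trace, and here $L=L_{a_0}$ is left multiplication with $\mathrm{tr}(L_{a_0})$ being precisely the paper's $\mathrm{tr}(a_0)$; so the identity applies verbatim once one keeps track of which trace is meant.
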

\begin{proof}
See Proposition 10.5 in \cite{Aljadeff}.\end{proof}
\begin{cor}
\label{cor:Kemer_and_traces}If $I$ is an ideal of $\mathcal{A}$
generated (as an $H$-$T$-ideal) by $d$-alternating $H$-polynomials,
then $tr(x_{0})\cdot f\in I$, where $x_{0}\in\mathcal{A}$ and $f\in I$.
\end{cor}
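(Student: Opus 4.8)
The plan is to reduce the statement to the generators of $I$ and then to apply the trace identity of \lemref{traces_and_alternation} to each of them.

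First I would observe that $I$ is spanned, as an $F$-vector space, by the values on $\mathcal{A}$ of multilinear $H$-polynomials of the shape
\[
G=z_1^{h_1'}\cdots z_p^{h_p'}\cdot\bigl(h\cdot g\bigr)\cdot w_1^{h_1''}\cdots w_q^{h_q''},
\]
where $g$ runs over the $d$-alternating generators of $I$, $h,h_i',h_j''\in H$, and $z_1,\dots,z_p,w_1,\dots,w_q$ are fresh variables: substituting elements of $\mathcal{A}$ for the variables of $g$ accounts for the closure of $I$ under $H$-endomorphisms of $\mathcal{A}$, the extra variables account for multiplication by elements of $\mathcal{A}$ on either side, and $h\cdot(-)$ accounts for the $H$-action. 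Each such $G$ is again alternating on the $d$-element set of variables on which $g$ alternated, since multiplication on either side and the $H$-action preserve $d$-alternation (under $h\cdot(-)$ the Hopf superscripts travel with their variables, so a transposition of two variables of the alternating set still negates $G$). Hence, by $F$-linearity of $f\mapsto tr(x_0)\cdot f$, it is enough to prove the claim for $f=G(\hat{a}_1,\dots,\hat{a}_d,\bar{Y})$, where $G$ is multilinear, alternating on $x_1,\dots,x_d$, and $\hat a_1,\dots,\hat a_d$, as well as the evaluation $\bar Y$ of the remaining variables, are elements of $\mathcal{A}$.

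Next I would invoke the trace identity. Recall that $\mathcal{A}\subseteq A\otimes_F K$ with $A=A_1\times\cdots\times A_s$ and $d=\max_i\dim_F R_i$, so that every semisimple element of $A$ satisfies a Cayley--Hamilton identity of degree at most $d$ while every radical element has trace zero --- this is exactly the input of the proof of \lemref{traces_and_alternation} (Proposition~10.5 of \cite{Aljadeff}), which goes through for our $d$-alternating $G$. Working block by block inside $A\otimes_F K=\prod_i\bigl(A_i\otimes_F K\bigr)$ one obtains
\[
tr(x_0)\cdot f\ \equiv\ \sum_{k=1}^{d}G\bigl(\hat a_1,\dots,\hat a_{k-1},\,x_0\hat a_k,\,\hat a_{k+1},\dots,\hat a_d,\bar Y\bigr)\pmod I .
\]
Each summand on the right is a value of the same $d$-alternating polynomial $G$ on $\mathcal{A}$, the $k$-th alternating slot now carrying $x_0\hat a_k\in\mathcal{A}$, and therefore lies in $I$. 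Consequently $tr(x_0)\cdot f\in I$, and the general case follows by summing.

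I expect the genuine work to be the displayed trace identity, since $\mathcal{A}$ is not literally a $d$-dimensional algebra. To establish it one decomposes each substitution $\hat a_i$ along the blocks $A_i\otimes_F K$ and into its semisimple and radical parts; on the contributions in which the alternating $d$-tuple lands semisimply --- necessarily in a single block, and that block of dimension exactly $d$ --- the degree-$d$ Cayley--Hamilton identity yields the relation exactly, whereas the remaining contributions, those with a radical substitution in the $d$-tuple or spreading over two blocks, either vanish or are again values of $G$ on $\mathcal{A}$, hence absorbed into $I$. Running this bookkeeping carefully, as in the proof of Proposition~10.5 of \cite{Aljadeff}, is the only non-formal step; the reduction of the first paragraph is routine.
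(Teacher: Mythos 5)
The paper gives no proof of this corollary, so there is nothing to compare against directly; but your argument has a genuine gap exactly at the step you flag as ``the only non-formal step,'' and the corollary as you read it --- for arbitrary $d$-alternating generators --- is actually false. Lemma~\ref{lem:traces_and_alternation} and Proposition~10.5 of \cite{Aljadeff} are proved only for $H$-Kemer polynomials of rank $\geq\mu_{A}+1$, and the Kemer hypothesis is not decorative: it forces every nonzero evaluation to place \emph{only semisimple} values in the small sets, which is precisely what kills the semisimple--radical cross-terms and makes the trace identity hold \emph{exactly}. For a bare $d$-alternating $G$ this fails, and so does the corollary. Take $A=F1\oplus Fn$ with $n^{2}=0$ (so $d=1$), $H=F$, and let $\mathcal{A}\subseteq A\otimes_{F}K$ be the relatively free algebra on $x_{1},x_{2}$ embedded generically by $x_{i}\mapsto\lambda_{i,1}+\lambda_{i,2}n$. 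The $1$-alternating polynomial $g(u,v)=uv$ generates an $H$-$T$-ideal containing $f=x_{1}x_{2}$, and $tr(x_{1})\cdot f=\lambda_{1,1}x_{1}x_{2}=x_{1}^{2}x_{2}-\lambda_{1,1}\lambda_{1,2}\lambda_{2,1}n$, which does not even lie in $\mathcal{A}$ (here $\mathcal{A}\cong F[x_{1},x_{2}]$ meets $J\otimes K$ trivially), let alone in $I$. Your fallback --- that the radical cross-terms ``are again values of $G$ on $\mathcal{A}$, hence absorbed into $I$'' --- has the same defect: the decomposition $\hat a_{i}=r_{i}+n_{i}$ is taken in $A\otimes_{F}K$, the components $r_{i},n_{i}$ need not lie in $\mathcal{A}$, so those cross-terms are not evaluations of $G$ on $\mathcal{A}$; and arguing ``$\equiv\pmod{I}$'' is circular, since it presupposes $tr(x_{0})f\in\mathcal{A}$, which is exactly what is in doubt.

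The correct reading, and the one matching the unique place the corollary is invoked (Proposition~\ref{prop:pre-phoenix}, where $\mathcal{I}$ is the set of evaluations of $H$-Kemer polynomials of $A$ lying in $\Gamma$), is that the generators are $H$-Kemer polynomials of rank $\geq\mu_{A}+1$. Your first-paragraph reduction is correct and, applied there, produces consequences $G$ that are multilinear, inherit all the small and big alternating sets, and (when not identities of $A$) are again Kemer polynomials of the required rank. Then Lemma~\ref{lem:traces_and_alternation} applies verbatim, the trace identity is \emph{exact}, the right-hand side is manifestly a sum of evaluations of $G$ on $\mathcal{A}$ and hence lies in $I$, and the corollary follows. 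So your overall plan --- reduce to generators, then apply the lemma --- is right; the fix is to carry the Kemer structure through the reduction and invoke the lemma as stated, rather than attempting to re-derive it for arbitrary $d$-alternating polynomials, where it breaks down.
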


\section{\label{sec:-Phoenix-property}$\Gamma$-Phoenix property}

Suppose $\Gamma$ is an $H$-$T$-ideal containing a Capelli identity.
We know this implies that $\Gamma$ contains the $H$-$T$-ideal of
a finite dimensional $H$-module $F$-algebra $A$. If we denote by
$p_{\Gamma}$ and $p_{A}$ the $H$-Kemer index of $\Gamma$ and $A$
respectively, then $p_{\Gamma}\leq p_{A}$. Our goal in this section
is to show that it is possible to replace $A$ by another finite dimensional
$H$-module algebra $B$ which is ``closer'' to $\Gamma$ in the
sense that its $H$-Kemer index and $H$-Kemer polynomials are exactly
as those of $\Gamma$. This will allow us to deduce the Phoenix property
for $H$-Kemer polynomials of $\Gamma$ from (the already established)
Phoenix property for $H$-Kemer polynomials of $B$.

Let $A$ be a finite dimensional $H$-module algebra which is a direct
product of basic algebras $A_{1}\times\cdots\times A_{s}$. Let $p_{A}$
and $p_{i}$ denote the $H$-Kemer index of $A$ and $A_{i}$, $i=1,\ldots,s$
respectively. We let $\mu_{i}=\mu_{A_{i}}$ and write $\mu_{0}$ for
the maximum of $\{\mu_{1},...,\mu_{s}\}$.
\begin{prop}
\label{prop:pre-phoenix} Let $\Gamma$ and $A$ as above. Then there
exist a representable $H$-module algebra $B$ with the following
properties: 
\begin{enumerate}
\item $id^{H}(B)\subseteq\Gamma$. 
\item The Kemer index $p_{B}$ of $B$ coincides with $p_{\Gamma}$. 
\item $\Gamma$ and $B$ have the same $H$-Kemer polynomials corresponding
to every $\mu$ which is $\geq\mu_{0}$. 
\end{enumerate}
Any $A$ satisfying $(2)$ and $(3)$ is called $H$-Kemer equivalent
to $W$. \end{prop}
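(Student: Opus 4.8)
The plan is to construct $B$ as an extremally chosen finite product of $H$-basic algebras lying inside the $H$-$T$-ideal $\Gamma$, and then to lower its $H$-Kemer index (and adjust its $H$-Kemer polynomials) to those of $\Gamma$ by ``shrinking'' its heaviest basic factors. First I would introduce the family $\mathcal F$ of all finite direct products of $H$-basic algebras $B$ with $id^{H}(B)\subseteq\Gamma$; it is nonempty since $A=A_{1}\times\cdots\times A_{s}$ lies in it. A short computation from the definition of the $H$-Kemer index — a multilinear non-identity of a product is a non-identity of some factor, and conversely — yields $\mbox{Ind}(B_{1}\times\cdots\times B_{r})=\max_{i}\mbox{Ind}(B_{i})$ in the lexicographic order, so (as the $H$-Kemer index reverses inclusions) every member of $\mathcal F$ has index $\ge p_{\Gamma}$. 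Since $\Omega$ is well ordered I may choose $B=B_{1}\times\cdots\times B_{r}\in\mathcal F$ with $\mbox{Ind}(B)$ minimal and, among those, with the fewest basic factors; property $(1)$ is then built in. Minimality forces that no factor is redundant — discarding a redundant $B_{i}$ would leave a member of $\mathcal F$ of no larger index with one fewer factor — so each $B_{i}$ carries a multilinear non-identity $g_{i}\notin\Gamma$ that is an identity of all the other factors.

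The heart of the argument is $(2)$, i.e.\ $\mbox{Ind}(B)=p_{\Gamma}$. For every large $\mu$ pick an $H$-Kemer polynomial $f_{\mu}$ of $\Gamma$ of rank $\mu$; since $f_{\mu}\notin\Gamma\supseteq id^{H}(B)$, it is a non-identity of some factor, and by the pigeonhole principle a single factor, say $B_{1}$, has $f_{\mu}\notin id^{H}(B_{1})$ for cofinally many $\mu$. As the $f_{\mu}$ alternate on unboundedly many sets of size $\alpha(\Gamma)$ and on $s(\Gamma)$ sets of size $\alpha(\Gamma)+1$, this gives $\mbox{Ind}(B_{1})\ge p_{\Gamma}$, and by the corollary following Kemer's Lemma~$2$, $\mbox{Ind}(B_{1})=\mbox{Par}(B_{1})$. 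Now assume $\mbox{Ind}(B)>p_{\Gamma}$; reordering, some factor $B_{1}$ has $\mbox{Ind}(B_{1})=\mbox{Par}(B_{1})>p_{\Gamma}$. Its $H$-Kemer polynomials of large enough rank then carry more alternating sets of cardinalities $d(B_{1})$ and $d(B_{1})+1$ than $\mbox{Ind}(\Gamma)$ permits, so they all lie in $\Gamma$; being of strictly larger $H$-Kemer index, they are moreover identities of every $H$-basic algebra of strictly smaller index. Letting $N\subseteq\Gamma$ be the $H$-$T$-ideal generated by these polynomials, I would replace $B_{1}$ by an $H$-module algebra $B_{1}'$ of strictly smaller parameter that still realizes the relevant non-identities: concretely, $B_{1}':=\mathcal B_{1}/\widehat N$ with $\mathcal B_{1}$ an affine relatively free $H$-module algebra of $B_{1}$, so that $id^{H}(B_{1}')=id^{H}(B_{1})+N$ by \corref{useful_lemma}, while $\mbox{Ind}(B_{1}')<\mbox{Ind}(B_{1})$ (enlarging an $H$-$T$-ideal by its $H$-Kemer polynomials strictly lowers the $H$-Kemer index), and $B_{1}'$ can be kept finite dimensional — or at least representable — after controlling the nilpotency index via \propref{Control_on_nilpotency_index} and \thmref{representable-over-commutattive-1}, hence a product of $H$-basic algebras by Remark~\ref{rem:basic_is_everywhere}.

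Carrying this out simultaneously on the finitely many index-maximal factors, the crucial point is to check $id^{H}\bigl(B_{1}'\times B_{2}\times\cdots\times B_{r}\bigr)\subseteq\Gamma$: writing such an element as $p+n$ with $p\in id^{H}(B_{1})$ and $n\in N\subseteq\Gamma$, and using that the $H$-Kemer polynomials of the heavy factors are identities of all the lighter ones, one gets $p\in id^{H}(B)\subseteq\Gamma$, so the element lies in $\Gamma$. The modified product then lies in $\mathcal F$ with index strictly below $\mbox{Ind}(B)$, contradicting minimality; hence $\mbox{Ind}(B)=p_{\Gamma}$. I expect this replacement step to be the main obstacle: performing the shrinking simultaneously on several basic factors that share the maximal index, keeping the resulting $H$-$T$-ideal inside $\Gamma$, and ensuring the pieces remain representable (which is exactly what \propref{Control_on_nilpotency_index} and \thmref{representable-over-commutattive-1} are for), is where the real work sits; the bookkeeping for which $H$-Kemer polynomials of $B_{1}$ actually fall into $\Gamma$ must be done carefully.

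Finally, for $(3)$ I set $\mu_{0}$ to be the maximum of $\mu_{\Gamma}$, the constant in the statement, and the $\mu_{B_{i}}$'s. An $H$-Kemer polynomial $f$ of $id^{H}(B)$ of rank $\mu\ge\mu_{0}$ is a non-identity of some factor $B_{i}$, which is then squeezed by $p_{\Gamma}\le\mbox{Ind}(B_{i})\le\mbox{Ind}(B)=p_{\Gamma}$, so $f$ is an $H$-Kemer polynomial of that basic factor. Rerunning the pigeonhole argument above — now that the index is known to be $p_{\Gamma}$ — shows every index-maximal factor detects the $H$-Kemer polynomials of $\Gamma$, so $B_{i}$ does; by the ($B_{i}$-)Phoenix property for $H$-Kemer polynomials of a basic algebra (\lemref{Phoenix} and Corollary~\ref{Kemer_polynomials_of_A_are_Phoenix}) one concludes $f\notin\Gamma$, for otherwise a Kemer-polynomial consequence of $f$ would lie in $\Gamma$, contradicting that $B_{i}$ detects the $H$-Kemer polynomials of $\Gamma$. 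Thus every high-rank $H$-Kemer polynomial of $id^{H}(B)$ lies outside $\Gamma$, which is $(3)$; and $B$, being a finite product of finite dimensional $H$-module algebras, is representable, completing the construction.
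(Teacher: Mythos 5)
Your route is genuinely different from the paper's, and it has a real gap at the end.

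The paper does not minimize over a family of products of $H$-basic algebras. It fixes the affine relatively free algebra $\mathcal{A}$ of $A$ (as a subalgebra of $A\otimes_F K$ via generic elements), builds the Noetherian trace ring $C$ from a Shirshov base, and takes $\mathcal{I}$ to be the ideal of $\mathcal{A}$ of all evaluations of those $H$-Kemer polynomials of $A$ that lie in $\Gamma$. Corollary~\ref{cor:Kemer_and_traces} then gives $\mathcal{A}/\mathcal{I}\subseteq\mathcal{A}_C/C\mathcal{I}$, Theorems~\ref{thm:finite-module-1} and~\ref{thm:representable-over-commutattive-1} give representability of the latter, hence of $\mathcal{A}/\mathcal{I}$; and since $\mathcal{I}$ contains (evaluations of) all high-rank polynomials of shape $\mbox{Ind}(A)$ whenever $\mbox{Ind}(A)>p_\Gamma$, the index drops strictly, so an induction on the index lands at $p_\Gamma$. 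To secure~(3) the paper then repeats the quotient once more. This avoids the factor-by-factor bookkeeping entirely, which is precisely what you flag as ``the real work.''

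Two places in your argument do not close. First, for~(2): after replacing the heavy factors $B_i$ by $B_i'=\mathcal{B}_i/\widehat{N}_i$, an element of $id^H(B_1'\times\cdots\times B_r)$ is, for each $i$, of the form $p_i+n_i$ with $p_i\in id^H(B_i)$ and $n_i\in N_i$; from $p_1+n_1=\cdots=p_r+n_r$ and $n_i\in\Gamma$ you only get $p_1\equiv p_j\bmod\Gamma$, not $p_1\in id^H(B_j)$, so the desired conclusion $p_1\in id^H(B)\subseteq\Gamma$ does not follow as written. You acknowledge this gap, but it is exactly the point the paper sidesteps by quotienting the single algebra $\mathcal{A}$.

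Second, and more seriously, your argument for~(3) is not valid. You claim that if a high-rank $H$-Kemer polynomial $f$ of $id^H(B)$ lies in $\Gamma$, then by the Phoenix property a Kemer-polynomial consequence of $f$ lying in $\Gamma$ ``contradicts that $B_i$ detects the Kemer polynomials of $\Gamma$.'' It does not: the statement ``$B_i$ detects the Kemer polynomials of $\Gamma$'' says that Kemer polynomials of $\Gamma$ (which are non-identities of $\Gamma$) are non-identities of $B_i$; a Kemer polynomial $f$ of $B_i$ that happens to lie in $\Gamma$ is an identity of $\Gamma$, hence not a Kemer polynomial of $\Gamma$, so no contradiction arises. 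The extremal choice of $B$ (minimal index, then minimal number of factors) controls the index but says nothing about whether some Kemer polynomials of $B$ fall into $\Gamma$. You would need one further quotient by the $H$-$T$-ideal generated by such polynomials — exactly what the paper does when it says to ``repeat the process one final time'' — after which Corollary~\ref{cor:useful_lemma} shows $id^H$ of the new algebra is $id^H(B)$ enlarged by a sub-$H$-$T$-ideal of $\Gamma$, the index is preserved because Kemer polynomials of $\Gamma$ lie outside $\Gamma$, and~(3) holds by construction.
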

\begin{cor}
By extending scalars to a larger field we may assume the $H$-module
algebra $B$ is finite dimensional over $F$.\end{cor}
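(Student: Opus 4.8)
The plan is to unwind the word ``representable.'' By \propref{pre-phoenix} the algebra $B$ is $H$-representable, so --- in the sense fixed around \thmref{representable-over-commutattive-1} --- there is a field extension $L$ of $F$ and a finite dimensional $H$-module $L$-algebra $A'$ together with an $H$-equivariant embedding of $F$-algebras $B\hookrightarrow A'$. The key point is \emph{not} to replace $B$ by the ambient algebra $A'$: in general $id^{H}(A')\subsetneq id^{H}(B)$, and then \ref{rem:comparison-1} would only give $\mbox{Ind}(A')\geq p_{\Gamma}$, so properties $(2)$ and $(3)$ of \propref{pre-phoenix} could fail. Instead I would pass to the $L$-linear span of $B$ inside $A'$.

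Concretely, set $B'=L\cdot B\subseteq A'$, the $L$-subspace of $A'$ spanned by (the image of) $B$. Since $L$ lies in the center of $A'$ and $B$ is a subalgebra, $(\ell_{1}b_{1})(\ell_{2}b_{2})=\ell_{1}\ell_{2}(b_{1}b_{2})\in L\cdot B$, so $B'$ is an $L$-subalgebra; since $h\cdot(\ell a)=\ell(h\cdot a)$ and $B$ is $H$-stable, $B'$ is an $H$-module $L$-subalgebra of $A'$; and being a subspace of the finite dimensional $L$-vector space $A'$, it is finite dimensional over $L$. (Equivalently, $B'$ is the image of the natural map $B\otimes_{F}L\to A'$, i.e. ``$B$ with scalars extended to $L$''.)

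The only thing that really needs an argument is that this extension does not change the $H$-identities, i.e. $id^{H}(B')=id^{H}(B)$ as $H$-$T$-ideals of $F^{H}\langle X\rangle$. The inclusion $id^{H}(B')\subseteq id^{H}(B)$ is trivial because $B\subseteq B'$. For the reverse inclusion it suffices to treat multilinear polynomials, since every $H$-$T$-ideal is $H$-$T$-generated by its multilinear elements. If $f(x_{1},\dots,x_{n})$ is multilinear with $f\in id^{H}(B)$ and $c_{1},\dots,c_{n}\in B'$, write $c_{j}=\sum_{k}\ell_{jk}b_{jk}$ with $\ell_{jk}\in L$ and $b_{jk}\in B$; by multilinearity $f(c_{1},\dots,c_{n})=\sum \ell_{1k_{1}}\cdots\ell_{nk_{n}}\,f(b_{1k_{1}},\dots,b_{nk_{n}})$, and every summand vanishes because $f$ is an identity of $B$. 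Hence $f\in id^{H}(B')$, which gives the claim.

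Finally, properties $(1)$--$(3)$ of \propref{pre-phoenix} are formulated purely in terms of $id^{H}(B)$ and $\Gamma$ (containment in $\Gamma$, the $H$-Kemer index, and the $H$-Kemer polynomials, the latter two being functions of $id^{H}(B)$ alone), so $B'$ inherits all of them from $B$. Relabelling $B'$ as $B$ and $L$ as the base field --- consistently with the field extensions already allowed in \thmref{affine-rep-1} and \thmref{Hrep!!} --- we may assume $B$ is finite dimensional. I do not anticipate a genuine obstacle here: the whole content is the observation that one should span $B$ over $L$ \emph{inside} $A'$ (rather than tensor up, which would not be finite dimensional, or pass to $A'$, which would enlarge the Kemer index), together with the standard multilinearity argument that scalar extension preserves identities in characteristic zero.
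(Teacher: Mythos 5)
The paper states this corollary with no proof --- it sits between Proposition \propref{pre-phoenix} and the proposition's own proof and is treated as a routine consequence of the definition of $H$-representability --- so there is no argument of the paper's to compare against. Your proof is correct and is the natural one: taking $L\cdot B$ inside the ambient finite-dimensional $L$-algebra $A'$ (rather than passing to $A'$, which could lose identities, or to $B\otimes_F L$, which need not be finite dimensional over $L$) yields a finite-dimensional $H$-module $L$-algebra, and the multilinearity argument correctly gives $id^{H}(L\cdot B)=id^{H}(B)$, after which properties (1)--(3) of Proposition \propref{pre-phoenix} transfer because they depend on $B$ only through its $H$-$T$-ideal. The one point worth making explicit is that the $L$-linearity of the $H$-action on $A'$, which is part of the definition of an $H$-module $L$-algebra in \thmref{representable-over-commutattive-1}, is what makes $L\cdot B$ an $H$-stable subspace and also what allows the scalars $\ell_{jk}\in L$ to be pulled through the $H$-action in the multilinear expansion; your proof uses this twice without naming it.
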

\begin{proof}[of \propref{pre-phoenix}]
 Let $\mathcal{B}$ be a Shirshov base of $\mathcal{A}$. Consider
the constructions in \subref{Relatively-free--module} so that $\mathcal{A}$
is an $H$-module $F$-subalgebra of $A\otimes_{F}K$. Denote by $C$
the unital $F$-subalgebra of $K^{\times s}$ generated by the characteristic
values of the elements of $\mathcal{B}$. Notice that this is a Noetherian
$F$-algebra. Finally, define the $H$-module $C$-algebra $\mathcal{A}_{C}=C\cdot\mathcal{A}$.

Let $\mathcal{I}$ be the set of all evaluations in $\mathcal{A}$
of all $H$-Kemer polynomials of $A$ which are inside $\Gamma$.
It is clear $\mathcal{I}$ is an $H$-ideal of $\mathcal{A}$. By
\thmref{representable-over-commutattive-1} and \thmref{finite-module-1}
we know that $\mathcal{A}_{C}/C\mathcal{I}$ is representable. So,
since \corref{Kemer_and_traces} implies $\mathcal{A}/\mathcal{I}\subseteq\mathcal{A}_{C}/C\mathcal{I}$
we conclude that $\mathcal{A}/\mathcal{I}$ is representable.

Furthermore, $id^{H}(\mathcal{A}/\mathcal{I})\subseteq\Gamma$ and
$\mbox{Ind}(\mathcal{A}_{C}/\mathcal{CI})<\mbox{Ind}(A)$. So by extending
the field $F$ we are allowed to assume $\mathcal{A}/\mathcal{I}$
is a finite dimensional $H$-module algebra. By induction on the $H$-Kemer
index we obtain a finite dimensional (over some extension field of
$F$) $H$-module algebra which satisfies $(1)$ and $(2)$. In order
to get also $(3)$, we repeat the process above one final time.\end{proof}
\begin{cor}[Phoenix property]
\label{cor:phoneix_property} Let $A$ be an affine $H$-module algebra,
then there is a number $\mu_{W}^{\prime}$ such that every $H$-Kemer
polynomial $f$ of $A$ of degree at least $\mu_{W}^{\prime}$ satisfies
the $W$-Phoenix property.\end{cor}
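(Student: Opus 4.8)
The plan is to derive this statement as an immediate corollary of Proposition~\ref{prop:pre-phoenix} together with Corollary~\ref{Kemer_polynomials_of_A_are_Phoenix}. Let $A$ be a finite dimensional $H$-module algebra with $id^{H}(A)\subseteq \Gamma = id^{H}(W)$, which exists by Section~\ref{sec:Getting-started} since $W$ is affine and ordinary PI (and hence satisfies some Capelli identity). Write $A\sim_{H\text{-}PI}A_{1}\times\cdots\times A_{s}$ as a product of $H$-basic algebras (Remark~\ref{rem:basic_is_everywhere}), and apply Proposition~\ref{prop:pre-phoenix} to obtain a finite dimensional (over an extension field) $H$-module algebra $B$ with $id^{H}(B)\subseteq\Gamma$, $\mathrm{Ind}(B)=\mathrm{Ind}(\Gamma)$, and such that $\Gamma$ and $B$ have the same $H$-Kemer polynomials for every $\mu\geq\mu_{0}$.

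The key step is to transfer the $B$-Phoenix property to a $\Gamma$-Phoenix (equivalently $W$-Phoenix) property. Set $\mu_{W}^{\prime}=\max\{\mu_{B}^{\prime},\mu_{0},\mu_{\Gamma,id^{H}(B)}\}$, where $\mu_{B}^{\prime}$ is the constant from Corollary~\ref{Kemer_polynomials_of_A_are_Phoenix} applied to $B$. Let $f$ be an $H$-Kemer polynomial of $\Gamma$ of rank at least $\mu_{W}^{\prime}$. Since $\mathrm{Ind}(B)=\mathrm{Ind}(\Gamma)$ and the rank exceeds the relevant threshold, $f$ is also an $H$-Kemer polynomial of $B$ (of the same rank), and in particular $f\notin id^{H}(B)$. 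Now suppose $f^{\prime}\in\langle f\rangle_{H}$ is multilinear and $f^{\prime}\notin\Gamma$; then a fortiori $f^{\prime}\notin id^{H}(B)$ since $id^{H}(B)\subseteq\Gamma$. By Corollary~\ref{Kemer_polynomials_of_A_are_Phoenix} (the $B$-Phoenix property for $H$-Kemer polynomials of $B$ of rank $\geq\mu_{B}^{\prime}$), there exists a multilinear $f^{\prime\prime}\in\langle f^{\prime}\rangle_{H}$ with $f^{\prime\prime}\notin id^{H}(B)$ which is an $H$-Kemer polynomial of $B$. Again using $\mathrm{Ind}(B)=\mathrm{Ind}(\Gamma)$ together with the coincidence of $H$-Kemer polynomials of $\Gamma$ and $B$, we conclude that $f^{\prime\prime}$ is an $H$-Kemer polynomial of $\Gamma$ that is not in $\Gamma$. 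This is exactly the $\Gamma$-Phoenix property for $f$.

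The main obstacle here is bookkeeping rather than mathematics: one must be careful that the various thresholds ($\mu_{B}^{\prime}$ for the Phoenix property of $B$, $\mu_{0}$ for the coincidence of Kemer polynomials, $\mu_{\Gamma}$ for stability of the index) are all absorbed into a single constant $\mu_{W}^{\prime}$, and that "$H$-Kemer polynomial of $\Gamma$" genuinely transfers to "$H$-Kemer polynomial of $B$" and back when the rank is large enough — this is where property $(3)$ of Proposition~\ref{prop:pre-phoenix} (``same $H$-Kemer polynomials for $\mu\geq\mu_{0}$'') is essential, since without it a Kemer polynomial of $\Gamma$ need not be a non-identity of $B$. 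Once the definitions are unwound, no further computation is required: the statement is a formal consequence of the two cited results.
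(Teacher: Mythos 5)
Your proof is correct and takes essentially the same route as the paper: replace $\Gamma = id^{H}(W)$ by the $H$-Kemer-equivalent finite dimensional algebra $B$ furnished by Proposition~\ref{prop:pre-phoenix}, then invoke Corollary~\ref{Kemer_polynomials_of_A_are_Phoenix}. You have merely made explicit the bookkeeping (how a Kemer polynomial of $\Gamma$ becomes one of $B$, how the Phoenix output $f''$ is pulled back to a Kemer non-identity of $\Gamma$ using property~(3) of Proposition~\ref{prop:pre-phoenix}) that the paper's two-sentence proof leaves to the reader.
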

\begin{proof}
By the previous theorem we may switch $W$ by a finite dimensional
$H$-module algebra without changing the $H$-Kemer index and polynomials.
So the corollary follows from \ref{Kemer_polynomials_of_A_are_Phoenix}.\end{proof}
\begin{defn}
Let $W$ be an affine $H$-module algebra and let $B$ be a finite
dimensional algebra as in \propref{pre-phoenix}. Denote by $\nu_{W}$
the number $\max\{\mu_{0},\mu_{W},\mu_{B},\mu_{W}^{\prime},\mu_{B}^{\prime}\}$.
Informally, for $\mu\geq\nu_{W}$ all the theorem concerning $H$-Kemer
polynomials of $W$ are true. 
\end{defn}

\section{Representable spaces\label{sec:Representable-Spaces}}

In this section we show the existence of a representable algebra $B_{\Gamma}$
satisfying the properties:
\begin{itemize}
\item $id^{H}(B_{\Gamma})\supseteq\Gamma$.
\item All $H$-Kemer polynomials of $\Gamma$ are non-identities of $B_{\Gamma}$.
\end{itemize}
\selectlanguage{american}%
We have seen in \secref{-Phoenix-property} that $W$ is $H$-Kemer
equivalent to a product of $H$-basic algebras $A=A_{1}\times\cdots\times A_{t}$.
Furthermore, \thmref{relativelly_affine} says that there is a number
$l$ such that the relatively free $H$-module algebra of $A$ on
the set $\Sigma=\{y_{1},...,y_{l}\}$ variables has the same $H$-identities
as $A$. Denote this algebra by $\mathcal{A}$. 

As before we identify $\mathcal{A}$ with an $H$-module subalgebra
of $A(\Lambda)$, where 
\[
\Lambda=\{\lambda_{k,i}|k=1...\mbox{dim}R,i=1...l\}.
\]

As in section \secref{-Phoenix-property} we view $R(\Lambda)$ as
a subalgebra of $End_{K}(R_{1\,}(\Lambda))\times\cdots\times End_{K}(R_{m}(\Lambda))$,
where $K=F(\Lambda),$ and consider the trace function $tr(a)=\left(tr(a),...,tr(a)\right)\in K^{\times m}$,
where $a$ is taken from $R(\Lambda)$. We may extend $tr$ to $A(\Lambda)$
by declaring the trace of a nilpotent element is zero.

Next, consider a Shirshov base $\mathcal{B}$ of $\mathcal{A}$ which
corresponds to the generators $x^{h}$ (here $x\in\Sigma$ and $h$
varies over a basis of $H$. This allows us to define the commutative
unital $F$-algebra $C$ generated by the characteristic values of
elements of $B$ (notice that each element in $A(\Lambda)$ satisfies
a Caylry-Hemiltonm polynomial of degree $d=\alpha(W)$). It is clear
that $C$ is Noetherian which acts on $R(\Lambda)$. 
\begin{defn}
Let $R$ be an $H$-module algebra. Denote by $id_{R}^{H}(A)$ the
$H$-ideal of $R$ consisting of all evaluations of polynomials in
$id^{H}(A)$ on $R$.

Denote by $(d,s)$ the $H$-Kemer index of $W$. Consider the relatively
free $H$-module algebra of $A$ on the set of variables $\Sigma\cup X$,
where $X=\cup_{i=1}^{\mu+1+r}X_{i}$, where $|X_{1}|=\cdots=|X_{\mu+1}|=d$
and $|X_{\mu+2}|=\cdots|X_{\mu+r+1}|=d+1$ and $\mu$ is big enough
so that $|X|\geq\dim J(A)$ and $\mu\geq\nu_{W}$. 

Recall the construction from (the proof of) \propref{property_K},
which is in our case:
\[
\mathcal{A}_{2}=\frac{A_{0}=C\cdot\mathcal{A}\ast_{C}C^{H}\left\{ X\right\} }{id_{A_{0}}^{H}(A)+\left\langle X\right\rangle _{H}^{|X|}}.
\]
We also define $\mathcal{A}_{1}$ to be the $H$-module algebra $H$-generated
by $\mathcal{A}$ and $X$. So: 
\[
\mathcal{A}_{1}=\frac{A_{0}^{\prime}=F^{H}\left\{ \Sigma\cup X\right\} }{id_{A_{0}^{\prime}}^{H}(A)+\left\langle X\right\rangle _{H}^{|X|}}.
\]

\end{defn}
It is easy to see that $id^{H}(\mathcal{A}_{1})=id^{H}(\mathcal{A})=id^{H}(A)$.
Note that this construction insures that any $H$-quotient of $\mathcal{A}_{2}$
is representable.
\selectlanguage{english}%
\begin{defn}
Let $f$ be an $H$-Kemer polynomial of $W$ with at least $\mu+1$
small sets. An evaluation of $f$ on $\mathcal{A}_{1}$ is admissible
if the following hold:
\begin{enumerate}
\item Precisely $\mu+1$ small sets in $f$, say $\dot{X}_{1},...,\dot{X}_{\mu+1}$,
are evaluated bijectively on the sets $X_{1},...,X_{\mu+1}$.
\item All big sets of $f$ are evaluated bijectively on the sets $X_{\mu+2},...,X_{\mu+1+r}$.
\item The rest of the variables in $f$ are evaluated on $H\cdot\mathcal{\Sigma}$. 
\end{enumerate}
\end{defn}
Denote by $\mathcal{S}$ the \textbf{set} of all admissible evaluations
of all $H$-Kemer polynomials of $W$.

Our goal is to prove that $\mathcal{S}$ projects injectively into
$\mathcal{A}_{3}=\mathcal{A}_{2}/id_{\mathcal{A}_{2}}^{H}(W)$. After
this is established it will be clear that:
\selectlanguage{american}%
\begin{enumerate}
\item $\mathcal{A}_{3}$ is representable.
\item $id^{H}(W)\subseteq id^{H}(\mathcal{A}_{3})$.
\item $W$ and $\mathcal{A}_{3}$ share the same $H$-Kemer polynomials.\end{enumerate}
\begin{lem}
$id_{\mathcal{A}_{2}}^{H}(W)\cap\mathcal{S}=\{0\}$.\end{lem}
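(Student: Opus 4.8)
The plan is to take an arbitrary $g\in\mathcal{S}\cap id_{\mathcal{A}_{2}}^{H}(W)$ and show that $g=0$. Write $g$ as an admissible evaluation of an $H$-Kemer polynomial $f$ of $W$, so that $\mu+1$ of the small sets of $f$, say $\dot X_{1},\dots,\dot X_{\mu+1}$, are sent bijectively onto $X_{1},\dots,X_{\mu+1}$, the $r=s$ big sets of $f$ are sent bijectively onto $X_{\mu+2},\dots,X_{\mu+1+r}$, and the remaining variables of $f$ go into $H\cdot\Sigma$. Since $g\in id_{\mathcal{A}_{2}}^{H}(W)$, I would write $g=\sum_{j}\psi_{j}(p_{j})$ with $p_{j}\in id^{H}(W)$ multilinear and each $\psi_{j}$ an $H$-homomorphism into $\mathcal{A}_{2}$. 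First I would pass to multidegrees: $\mathcal{A}_{2}$ is graded by the multidegree in its generating variables, $g$ is linear in each of the variables of $X$, and $id_{\mathcal{A}_{2}}^{H}(W)$ is closed under taking multihomogeneous components; so, after replacing each $p_{j}$ by suitable multilinearizations and each $\psi_{j}$ by its homogeneous parts, I may assume every summand $\psi_{j}(p_{j})$ is linear in each variable of $X$ and has the same $\Sigma$-multidegree as $g$.

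Next I would apply the alternator $\mbox{Alt}_{X_{1}}\circ\cdots\circ\mbox{Alt}_{X_{\mu+1+r}}$, one factor per designated set. Because $f$ alternates on each $\dot X_{i}$ and on each of its big sets, the evaluation $g$ alternates on every $X_{i}$, so this operator carries $g$ to $Ng$ with $N=(d!)^{\mu+1}((d+1)!)^{r}\neq0$. For the summands, permuting the variables inside a fixed set $X_{i}$ induces an $H$-automorphism of $\mathcal{A}_{2}$ (the relations $id_{A_{0}}^{H}(A)$ and $\langle X\rangle_{H}^{|X|}$ are stable under relabelling the variables of $X$), so each $\mbox{Alt}_{X_{1}}\cdots\mbox{Alt}_{X_{\mu+1+r}}(\psi_{j}(p_{j}))$ equals the image in $\mathcal{A}_{2}$ of an $H$-polynomial $q_{j}\in\langle p_{j}\rangle_{H}\subseteq id^{H}(W)$. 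By construction $q_{j}$ is linear in each variable of $X$ and alternates on the $\mu+1$ sets $X_{1},\dots,X_{\mu+1}$ of cardinality $d$ and on the $r=s$ sets $X_{\mu+2},\dots,X_{\mu+1+r}$ of cardinality $d+1$; hence, after multilinearizing $q_{j}$ in the remaining ($\Sigma$-)variables, which does not disturb the alternation on the $X_{i}$, each resulting multilinear piece is a polynomial of exactly the shape of an $H$-Kemer polynomial of rank $\mu+1$.

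The crucial step is then to conclude that the image of every such $q_{j}$ in $\mathcal{A}_{2}$ is zero. Here I would invoke the $H$-Kemer equivalence of $W$ with $A=A_{1}\times\cdots\times A_{t}$: one has $id^{H}(A)\subseteq id^{H}(W)$, and since $\mu+1\geq\nu_{W}$ the $H$-$T$-ideals $id^{H}(A)$ and $id^{H}(W)$ share the same $H$-Kemer polynomials of rank $\geq\mu+1$. Consequently a multilinear polynomial of Kemer shape and rank $\geq\mu+1$ lying in $id^{H}(W)$ must already lie in $id^{H}(A)$ — otherwise it would be a genuine $H$-Kemer polynomial of $id^{H}(A)$ of large rank and hence not an identity of $W$, a contradiction. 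Applying this to each multilinear piece of $q_{j}$ gives $q_{j}\in id^{H}(A)$, and since $\mathcal{A}_{2}$ is a quotient by an ideal containing all evaluations of $id^{H}(A)$, the image of $q_{j}$ in $\mathcal{A}_{2}$ vanishes. Therefore $Ng=\sum_{j}\mbox{Alt}_{X_{1}}\cdots\mbox{Alt}_{X_{\mu+1+r}}(\psi_{j}(p_{j}))=0$, and as $N\neq0$ we obtain $g=0$.

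I expect the third step to be the heart of the matter. The difficulty is that $id^{H}(\mathcal{A}_{2})$ contains $id^{H}(A)$ but is, in general, strictly smaller than $id^{H}(W)$, so a $W$-identity need not vanish on $\mathcal{A}_{2}$; what rescues the argument is that after the alternation the relevant $W$-identities acquire full Kemer shape with rank past $\nu_{W}$, and for such polynomials the $H$-Kemer equivalence of $W$ and $A$ is precisely the statement that being a $W$-identity forces being an $A$-identity. The remaining ingredients — the multigrading bookkeeping, the decomposition of the substitutions into homogeneous parts, the fact that relabelling $X$ is an $H$-automorphism of $\mathcal{A}_{2}$, and the identification of $q_{j}$ as a Kemer-shaped consequence of $p_{j}$ — I expect to be routine once this is in place.
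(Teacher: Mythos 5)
Your overall strategy is sound but it is a genuinely different route from the paper's, and there is one imprecise step you should tighten. The paper alternates on \emph{only one} small set $X_1$ and then invokes the trace--alternation lemma (the one supplying $tr(a_0)f(a_1,\dots,a_d,\bar Y)=\sum_k f(\dots,a_0a_k,\dots)$), whose whole purpose is to absorb the coefficients $g_i$ lying in the commutative Noetherian algebra $C$ back into a single polynomial $\psi_i\in id^H(W)$; the contradiction it then produces is that the Kemer polynomial $f$ itself would be a $W$-identity. You instead alternate on every $X_i$ and appeal to the Kemer equivalence of $W$ with $A$ (Proposition~\ref{prop:pre-phoenix}) to conclude that each Kemer-shaped $W$-identity is already in $id^H(A)$ and hence dies in $\mathcal{A}_2$, so that $g=0$. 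That is a real shortcut around the trace lemma at this spot (though note that Proposition~\ref{prop:pre-phoenix} itself was established using exactly that trace machinery, so you have relocated rather than eliminated the dependence). Your Step~3 logic (Kemer-shaped + in $id^H(W)$ + rank $\geq\mu+1\geq\nu_W$ $\Rightarrow$ in $id^H(A)$) is correct.

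The place that needs care is your claim that $\mathrm{Alt}_{X_1}\cdots\mathrm{Alt}_{X_{\mu+1+r}}(\psi_j(p_j))$ ``equals the image in $\mathcal{A}_2$ of an $H$-polynomial $q_j\in\langle p_j\rangle_H$.'' This is not literally true, because $\psi_j$ sends the variables of $p_j$ to elements of $\mathcal{A}_2$, which is a $C$-algebra; writing those images as $C$-linear combinations of monomials in $X\cup\Sigma$ and expanding, one gets a \emph{$C$-linear combination} $\sum_k c_k\,\overline{q_j^{(k)}}$ with $c_k\in C$ and $q_j^{(k)}\in\langle p_j\rangle_H$, not a single polynomial image. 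This is exactly the obstruction that the paper disposes of via the trace lemma. In your route the obstruction turns out to be harmless --- once you show that (the multilinearizations of) each $q_j^{(k)}$ lie in $id^H(A)$ and hence have zero image in $\mathcal{A}_2$, the $C$-coefficients multiply zero --- but you should make that decomposition explicit rather than assert a single $q_j$. With that correction, the argument goes through and gives a slightly cleaner conclusion ($g=0$ directly) than the paper's contradiction.
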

\begin{proof}
Suppose $f\in\mathcal{S}$ is also in $id_{\mathcal{A}_{2}}^{H}(W)$.
So there are $g_{i}\in C$ and evaluations $p_{i}$ of multilinear
polynomials in $id^{H}(W)$ by elements from the set $X\cup\Sigma$
such that
\[
f=\sum g_{i}p_{i}.
\]
By specializing different $x\in X$ to $0$ we may assume that each
monomial of each $p_{i}$ has at least one appearance of every $x\in X$.
Since $\left\langle X\right\rangle _{H}^{|X|}=0$, each $p_{i}$ is
multilinear in $X$.

It is easy to check that $\mbox{Alt}{}_{X_{1}}(f)$ is well defined
on $\mathcal{A}_{1}$. So 
\[
d!\cdot f=\sum g_{i}\mbox{Alt}_{X_{1}}(p_{i}).
\]
Therefore, in $A_{0}/id_{A_{0}}^{H}(A)$ we get the equality: 
\[
d!\cdot f=\sum g_{i}\mbox{Alt}_{X_{1}}(p_{i})+b,
\]
where $b\in\left\langle X\right\rangle _{H}^{|X|}$. We may substitute
instead of each $x\in X$ an element of the form $\bar{x}=\sum_{k=1}^{\dim_{F}A}a_{k}\tau_{k,x}$,
where $\tau_{k,x}$ is a commutative indeterminate and $a_{1},...,a_{\dim A}$
is a basis of $A$. By \lemref{traces_and_alternation}, $g_{i}\overline{\mbox{Alt}_{X_{1}}(p_{i})}$
is equal to $\overline{\psi_{i}}$, where $\psi_{i}\in F^{H}\left\{ X\cup\Sigma\right\} $
(multilinear in $X$) is in $id^{H}(W)$. Thus, 
\[
d!\cdot f\equiv_{\mbox{mod}id^{H}(A)}\sum\psi_{i}\in id^{H}(W).
\]
Since $id^{H}(A)\subseteq id^{H}(W)$, we got a contradiction to $f$
being an $H$-Kemer polynomial of $W$.\end{proof}
\selectlanguage{english}%
\begin{cor}
\label{cor:Kemer polynomials_on_the_representable_algebra} Let $f$
be any $H$-Kemer polynomial of the $H$-module algebra $W$ (at least
$\mu+1$ small sets). Then $f\notin id^{H}(\mathcal{A}_{3})$.
\end{cor}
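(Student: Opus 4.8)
The plan is to deduce the corollary directly from the preceding lemma. It is enough to produce, for a given $H$-Kemer polynomial $f$ of $W$ with at least $\mu+1$ small sets, a single \emph{admissible} evaluation $\bar{f}$ of $f$ that is nonzero in $\mathcal{A}_{2}$: such an $\bar{f}$ lies in $\mathcal{S}$, so the lemma ($\mathcal{S}\cap id_{\mathcal{A}_{2}}^{H}(W)=\{0\}$) forces $\bar{f}\notin id_{\mathcal{A}_{2}}^{H}(W)$, whence its image in $\mathcal{A}_{3}=\mathcal{A}_{2}/id_{\mathcal{A}_{2}}^{H}(W)$ is nonzero and $f\notin id^{H}(\mathcal{A}_{3})$. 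Thus the entire task reduces to manufacturing one nonvanishing admissible evaluation.

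The first step is to check that $f$ remains a nonidentity in the relevant algebras. Since $f$ is $H$-Kemer for $W$ we have $f\notin\Gamma=id^{H}(W)\supseteq id^{H}(A)$, and moreover $id^{H}(A)=id^{H}(\mathcal{A})=id^{H}(\mathcal{A}_{1})=id^{H}(\mathcal{A}_{2})$; the last equality is an instance of \propref{Control_on_nilpotency_index} applied with $u=|X|$ (large, in particular $\ge n_{A}$), combined with the fact that extending scalars from $F$ to the commutative $F$-algebra $C$ does not alter multilinear identities. Hence $f\notin id^{H}(\mathcal{A}_{2})$, and in particular $f$ admits a nonzero evaluation on $A$.

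The second step is to reshape such an evaluation into admissible form. Since $\mbox{Ind}(A)=\mbox{Ind}(W)=(d,s)$ (recall that $A$ is $H$-Kemer equivalent to $W$, see \propref{pre-phoenix}) and $f$ is $H$-Kemer with $\ge\mu+1>\nu_{W}$ small sets of cardinality $d$ and $s$ big sets of cardinality $d+1$, Kemer's Lemma $2$ (\lemref{Kemer's_Lemma_2}), applied inside the basic component of $A$ realizing the index, dictates the shape of any nonzero evaluation of $f$ on $A$: after renaming the sets one may take $\mu+1$ of the small sets to be filled by bases of the semisimple part of that component, each of the $s$ big sets to be filled by such a basis together with exactly one radical value, and all remaining variables to take arbitrary values in $A$. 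I would then transport this to $\mathcal{A}_{1}$ by substituting the chosen $\mu+1$ small sets bijectively onto $X_{1},\dots,X_{\mu+1}$, the big sets bijectively onto $X_{\mu+2},\dots,X_{\mu+1+r}$ (with $r=s$), and every remaining variable into $H\cdot\Sigma$, specializing the generic scalars $\Lambda$ that realize $\mathcal{A}$ inside $A(\Lambda)$ so as to recover the fixed evaluation. Each monomial of the resulting $\bar{f}$ uses every substituted $X$-variable exactly once, so it has $X$-degree $|X|$, which is below the nilpotency cutoff built into the definitions of $\mathcal{A}_{1}$ and $\mathcal{A}_{2}$, so $\bar{f}$ survives those quotients. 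Its nonvanishing in $\mathcal{A}_{1}$ (hence in $\mathcal{A}_{2}$) is witnessed by the $H$-specialization $\mathcal{A}_{1}\to A\otimes_{F}F[t]/(t^{|X|+1})$ sending each $X$-generator to its prescribed $A$-value tensored with $t$ and each generator of $\Sigma$ to its value tensored with $1$: this is compatible with all defining relations and carries $\bar{f}$ to the fixed (nonzero) evaluation times $t^{|X|}\neq0$. As $\bar{f}$ is admissible by construction, $\bar{f}\in\mathcal{S}$, and the first paragraph finishes the argument.

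I expect the main obstacle to be precisely this nonvanishing of $\bar{f}$ in $\mathcal{A}_{2}$, which rests on three points that must be argued with care: that $id^{H}(\mathcal{A}_{2})=id^{H}(A)$, so that a nonzero evaluation exists at all; that a nonzero evaluation of the $H$-Kemer polynomial $f$ on $A$ can be taken with all designated small-set values semisimple and each big set carrying a single radical value, so that the handful of radical values can be absorbed by the $X$-generators --- this is exactly where the hypothesis $|X|\ge\dim J(A)$ is used; and a bookkeeping of $X$-degrees showing that the nilpotency cutoff on $\langle X\rangle_{H}$ is set just so as to preserve admissible monomials while annihilating anything longer. Granting these, the corollary is a purely formal consequence of the preceding lemma.
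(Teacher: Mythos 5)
Your overall plan — exhibit a nonzero admissible evaluation $\bar{f}\in\mathcal{S}$, invoke the lemma to get $\bar{f}\notin id_{\mathcal{A}_2}^H(W)$, and read off nonvanishing in $\mathcal{A}_3$ — is exactly the implicit argument the paper is relying on, so the approach is right. However, your nonvanishing step via the specialization $\mathcal{A}_1\to A\otimes_F F[t]/(t^{|X|+1})$ is in tension with the construction as literally written: the paper mods out by $\langle X\rangle_H^{|X|}$, while an admissible evaluation of a multilinear $f$ has $X$-degree exactly $|X|$ and hence lies in $\langle X\rangle_H^{|X|}$. Under the stated exponent your map is not well-defined (products of $|X|$ $X$-generators land in $A\otimes(t^{|X|})\neq 0$) and, worse, every element of $\mathcal{S}$ would already be $0$, making the whole section vacuous. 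Your use of $F[t]/(t^{|X|+1})$ and the phrase ``$X$-degree $|X|$ is below the cutoff'' silently assume the exponent should be $|X|+1$; this is almost certainly the intended reading (the same replacement is needed in the lemma's proof, where it is asserted that each $p_i$ is multilinear in $X$), but you should say so explicitly rather than apply the specialization against a definition it contradicts.

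Two smaller points. First, you prove $\bar{f}\neq 0$ in $\mathcal{A}_1$ and then ``hence in $\mathcal{A}_2$''; this needs either injectivity of $\mathcal{A}_1\to\mathcal{A}_2$ (not stated in the paper) or the observation that your specialization extends over $C$ (by specializing $\Lambda$ so that the characteristic values land in $F^{\times s}$, acting on $A$ through the idempotents of $\overline{A}$), so that it factors through $\mathcal{A}_2$ directly — the second route is the safer one to spell out. Second, the entire nonvanishing argument is actually dispensable: the lemma's proof derives a contradiction from $\bar{f}\in\mathcal{S}\cap id_{\mathcal{A}_2}^H(W)$ without ever using $\bar{f}\neq 0$ (if $\bar{f}=0$ one may take the empty decomposition and still conclude $d!\,f\in id^H(A)\subseteq id^H(W)$, contradicting $f$ being $H$-Kemer). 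So the intersection is in fact empty, and the corollary follows from the mere existence of an admissible evaluation of $f$ — which is automatic since $\mu+1$ small sets and all big sets are present. Your specialization argument remains a useful sanity check that the construction is not vacuous, but it is not logically required.
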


\section{Finalization of \thmref{affine-rep-1}}

We have all the ingredients needed to prove the main theorem.
\begin{proof}
The proof is by induction on the Kemer index $p$ associated to an
$H$-$T$-ideal $\Gamma$ (satisfying a Capelli identity). If $p=0$
then $\Gamma=F^{H}\left\langle X\right\rangle $ and so $W=0$. Suppose
the theorem is true for any affine $H$-module algebra with $H$-Kemer
index smaller than $p$. Denote by $S_{p}$ the $H$-$T$-ideal generated
by all $H$-Kemer polynomials corresponding to $\Gamma$, and let
$\Gamma'=\Gamma+S_{p}$. It is clear that the $H$-Kemer index of
$\Gamma'$ is smaller than $p$. Hence, by the inductive hypothesis
there is a representable $H$-module algebra $A'$ having $\Gamma'$
as its $H$-$T$-ideal of identities.

Let $B_{\Gamma}$ be the representable $H$-module algebra constructed
in the previous section. We'll show $\Gamma=id^{H}(A'\times B_{\Gamma})$.

It is clear that $\Gamma\subset id^{H}(A'\times B_{\Gamma})$ since
$\Gamma$ is contained in $\Gamma'$ and by construction $\Gamma\subseteq id^{H}(B_{\Gamma})$.
Suppose there is $f\notin\Gamma$ with $f\in id^{H}(A'\times B_{\Gamma})=id^{H}(A')\cap id^{H}(B_{\Gamma})$.
Since $f\in id^{H}(A')=\Gamma'$, we may assume $f\in S_{p}$. Using
the Phoenix property \corref{phoneix_property}, we obtain a Kemer
polynomial $f'$ (with at least $\mu+1$ small sets) such that $f'\in(f)_{H}$.
But by \corref{Kemer polynomials_on_the_representable_algebra}, $f\notin id^{H}(B_{\Gamma})$
and this contradicts our previous assumption on $f$. This completes
the proof.

\end{proof}

\subsection{Non affine case}

Let $H$ be any $F$-Hopf algebra. Denote by $H_{2}$ the Hopf algebra
$H\otimes_{F}\left(FC_{2}\right)^{*}$, where $C_{2}$ is the additive
group with two elements $0$ and $1$. An $F$-algebra $W$ is an
$H_{2}$-module algebra if it is $C_{2}$-graded $H$-module algebra
such that the graded component $W_{0}$ and $W_{1}$ of $W$ are stable
under the action of $H$. We denote by $G$ the Grassmann algebra
over $F$, which is a $C_{2}$-graded algebra. If $W$ is an $H_{2}$-module
algebra then we can define the $C_{2}$-graded algebra $E(W)=W_{0}\otimes E_{0}\oplus W_{1}\otimes E_{1}$.
It also has an $H$ structure given by 
\[
h\cdot\left(a_{0}\otimes w_{0}+a_{1}\otimes w_{1}\right)=h(a_{0})\otimes w_{0}+h(a_{1})\otimes w_{1}.
\]
Therefore we obtain an $H_{2}$-algebra. 

It is possible to get an $H_{2}$-module algebra from an $H$-module
algebra: Let $W$ be an $H$-module algebra. The algebra $W_{E}=W\otimes E$
is an $H_{2}$-module algebra, where $\left(W_{E}\right)_{0}=W\otimes E_{0}$
and $\left(W_{E}\right)_{1}=W\otimes E_{1}$. The $H$-action is given
by 
\[
h\cdot(w\otimes e)=(h\cdot w)\otimes e.
\]

The $H_{2}$-module algebra $F^{H_{2}}\left\{ X\right\} $ can be
considered as the $H$-module algebra $F^{H}\left\{ Y,Z\right\} $,
where $Y$ and $Z$ are countable sets of variables. the variables
in $Y$ are considered even and the ones in $Z$ are odd. We identify
$x_{i}\in X$ with $y_{i}+z_{i}$, thus for every $h\in H$ we have
$x_{i}^{h}=y_{i}^{h}+z_{i}^{h}$. Denote by $L_{d,l}$ the affine
$H_{2}$-module algebra $F^{H}{\left\{ y_{1},...,y_{d},z_{1},...,z_{l}\right\} }$

The following is proven in \cite{Berele1999}.
\begin{thm}
\label{thm:affine-rep}If $W$ is an $H$-module algebra which satisfy
an ordinary PI $f$, then $id^{H}(W)=id^{H}(E(L))$ for $L=L_{d,l}/id^{H_{2}}(W_{E})$,
where $d$ and $l$ are determined by the degree of $f$. 
\end{thm}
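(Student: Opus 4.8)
The plan is to prove the two $H$-PI-equivalences $W\sim_{H-PI}E(W_E)$ and $E(W_E)\sim_{H-PI}E(L)$ and chain them. For the first, the point is that since $W$ is ungraded, $W_E=W\otimes E$ carries the grading $(W_E)_i=W\otimes E_i$, so on the nose
\[
E(W_E)=(W\otimes E_0)\otimes E_0\ \oplus\ (W\otimes E_1)\otimes E_1\ =\ W\otimes_F R,\qquad R:=(E_0\otimes E_0)\oplus(E_1\otimes E_1),
\]
where $R$ is the degree-zero component of the ordinary tensor square $E\otimes_F E$ and the $H$-action on $E(W_E)$ becomes $h\cdot(w\otimes r)=h(w)\otimes r$. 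A direct computation shows that $R$ is a \emph{commutative} unital $F$-algebra: the generators $e_i\otimes 1$ and $1\otimes e_j$ are central in $E\otimes E$, while two generators $e_i\otimes e_j$ and $e_k\otimes e_l$ with all four $e$'s odd commute because the two sign changes, one in each tensor slot, cancel. Tensoring an $F$-algebra with a commutative unital $F$-algebra on which $H$ acts only through the first factor changes neither its multilinear identities nor its multilinear $H$-identities (for multilinear $f$ the commuting scalars $r_i$ factor out of every monomial uniformly), and every $H$-$T$-ideal is generated by its multilinear elements; hence $id^H(E(W_E))=id^H(W\otimes_F R)=id^H(W)$.

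Next I would record that, for any $H_2$-module algebra $A$, the $H$-$T$-ideal $id^H(E(A))$ depends only on $id^{H_2}(A)$. Indeed, evaluating a multilinear $f\in F^H\langle X\rangle$ on homogeneous elements $a_i\otimes g_i\in E(A)\subseteq A\otimes E$ and pulling the Grassmann factors through the product with the Koszul signs gives $f(a_1\otimes g_1,\dots,a_n\otimes g_n)=\widetilde f^{\,\varepsilon}(a_1,\dots,a_n)\otimes(g_1\cdots g_n)$, where $\widetilde f^{\,\varepsilon}$ is a sign-twisted polynomial depending only on the parity vector $\varepsilon=(|g_1|,\dots,|g_n|)$; since $g_1\cdots g_n$ can be chosen nonzero, $f\in id^H(E(A))$ if and only if each $\widetilde f^{\,\varepsilon}$, read as an element of $F^{H_2}\langle X\rangle$, lies in $id^{H_2}(A)$. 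In particular $id^{H_2}(A)=id^{H_2}(B)$ implies $id^H(E(A))=id^H(E(B))$.

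By the previous paragraph it now suffices to show $id^{H_2}(L)=id^{H_2}(W_E)$, i.e.\ that the relatively free $H_2$-module algebra $L$ of $W_E$ on $d$ even and $l$ odd generators already carries all $H_2$-identities of $W_E$. This is where the hypothesis on $\deg f$ enters. Since $W$ satisfies an ordinary PI of degree $\deg f$, Regev's tensor-product theorem supplies an ordinary PI for $W_E=W\otimes E$ of degree bounded in terms of $\deg f$, so that, by the hook characterization of PI-algebras, the $H_2$-cocharacter sequence of $W_E$ is supported on partitions contained in a hook $H(d,l)$ with $d$ and $l$ determined by $\deg f$. A symmetrization argument over the rows of the associated Young tableaux — closely parallel to the proof of \thmref{relativelly_affine}, the new feature being that rows of length at most $l$ are filled by odd ($z$-)variables and rows of unbounded length by even ($y$-)variables — then shows that $id^{H_2}(W_E)$ is generated, as an $H_2$-$T$-ideal, by its multilinear elements in the $d+l$ variables of $L_{d,l}$; this is precisely the assertion $id^{H_2}(L)=id^{H_2}(W_E)$.

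Chaining the three steps gives $id^H(W)=id^H(E(W_E))=id^H(E(L))$. I expect the genuine difficulty to be concentrated in the last step: transporting the classical ``hook $\Rightarrow$ boundedly many variables'' reduction of Regev and Kemer to the $H_2$-equivariant setting, and tracking the precise dependence of the pair $(d,l)$ on $\deg f$; by contrast the first two steps are formal manipulations with Grassmann envelopes. Since this reduction is carried out in \cite{Berele1999}, I would ultimately appeal to that reference for the details of the final step.
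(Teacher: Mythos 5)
The paper offers no argument for this theorem at all (it is stated as ``proven in \cite{Berele1999}''), and you ultimately defer to the same reference for the genuine difficulty, namely the hook-theoretic bound forcing $d$ even and $l$ odd generators. Your reconstruction of the standard reduction leading up to that citation is correct and is useful precisely because the paper omits it: the identification $E(W_{E})\cong W\otimes_{F}R$ with $R=(E_{0}\otimes E_{0})\oplus(E_{1}\otimes E_{1})$ commutative, unital, and acted on trivially by $H$, which gives $id^{H}(E(W_{E}))=id^{H}(W)$ by the usual multilinear-scalars argument; and the sign-twist correspondence $f\mapsto\widetilde{f}^{\,\varepsilon}$, which shows that $id^{H}(E(A))$ is determined by $id^{H_{2}}(A)$ and thus reduces the claim to $id^{H_{2}}(L)=id^{H_{2}}(W_{E})$ --- exactly the Berele step. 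One small inaccuracy worth fixing in your write-up of the first step: the elements $e_{i}\otimes1$ and $1\otimes e_{j}$ are \emph{not} central in the ordinary tensor product $E\otimes E$ (two odd generators landing in the same tensor slot still anticommute), and in any case these elements do not even lie in $R$. What actually carries the commutativity of $R$ is precisely the double sign-cancellation you also state --- for $e_{i}\otimes e_{j}$ and $e_{k}\otimes e_{l}$ with all four factors odd the two sign changes, one per slot, cancel --- together with the trivial even cases; so the conclusion is right even though that particular sentence is not.
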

Since by \thmref{affine-rep-1} $L$ is $H_{2}$-PI equivalent to
a finite dimensional (over an extension field of $F$) $H_{2}$-module
algebra $A$, it is clear that:
\begin{thm}[\thmref{Hrep!!}]
\label{thm:rep}If $W$ is an $H$-module algebra which satisfy an
ordinary PI, then there is a finite dimensional $H_{2}$-module algebra
$A$ over some extension field of $F$, such that $id^{H_{2}}(W_{E})=id^{H_{2}}(A)$.
 
\end{thm}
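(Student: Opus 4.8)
The plan is to bootstrap from the affine case: rather than applying \thmref{affine-rep-1} to $W$ directly, I would apply it to the affine $H_{2}$-module algebra produced by Berele's theorem and then carry the conclusion back. Two preliminary checks are in order. First, $W_{E}=W\otimes_{F}E$ satisfies an ordinary PI: $W$ does by hypothesis, $E$ does (the Grassmann identity), and by Regev's theorem a tensor product over $F$ of ordinary PI algebras is an ordinary PI algebra; fix such an identity $f$ of $W_{E}$. Second, $H_{2}=H\otimes_{F}(FC_{2})^{*}$ is again a finite dimensional semisimple Hopf $F$-algebra, since $H$ is by hypothesis, $(FC_{2})^{*}\cong F\times F$ is finite dimensional, commutative and semisimple (as $F$ has characteristic zero), and a tensor product over $F$ of finite dimensional semisimple Hopf algebras is finite dimensional, semisimple and Hopf. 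Hence \thmref{affine-rep-1} is available verbatim with $H_{2}$ in place of $H$.

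Now invoke \thmref{affine-rep} for the PI $f$: the algebra $L=L_{d,l}/id^{H_{2}}(W_{E})$ is an \emph{affine} $H_{2}$-module $F$-algebra, $id^{H}(W)=id^{H}(E(L))$, and $d,l$ depend only on $\deg f$; being a quotient of the free object $L_{d,l}$ by the $H_{2}$-identities of the ordinary PI algebra $W_{E}$, the algebra $L$ is affine and still satisfies $f$. Apply \thmref{affine-rep-1} to $L$ to obtain a field extension $K/F$ and a finite dimensional $H_{2}$-module $K$-algebra $A$ with $id^{H_{2}}(A)=id^{H_{2}}(L)$. It then remains to see that passing from $W_{E}$ to $L$ costs no $H_{2}$-identities, that is $id^{H_{2}}(W_{E})=id^{H_{2}}(L)$; granting this, $id^{H_{2}}(W_{E})=id^{H_{2}}(L)=id^{H_{2}}(A)$, which is the assertion. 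In parallel, since the Grassmann envelope $V\mapsto E(V)$ respects $H_{2}$-substitutions, $id^{H_{2}}(L)=id^{H_{2}}(A)$ also gives $id^{H}(E(L))=id^{H}(E(A))$, whence with Berele's identity $W\sim_{H-PI}E(A)$ --- the Grassmann-envelope form of \thmref{Hrep!!}, which drops straight out of \thmref{affine-rep} and \thmref{affine-rep-1} with no extra input.

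The genuinely delicate point --- and the one I expect to be the main obstacle --- is exactly the equality $id^{H_{2}}(W_{E})=id^{H_{2}}(L)$: that the finitely many homogeneous generators of $L_{d,l}$ already witness every $H_{2}$-identity failure of $W_{E}$. This is the step that must replace the classical ``relatively free on $t-1$ variables'' reduction of \thmref{relativelly_affine} in a setting where $W_{E}$ need not satisfy any Capelli identity, and it is precisely why one must appeal to Berele's theorem rather than to a bare count of variables. Everything else --- Regev's theorem, the Hopf-algebra bookkeeping for $H_{2}$, the stability of the ordinary PI and of affineness under the quotient, the functoriality of the Grassmann envelope, and the appeal to \thmref{affine-rep-1} (already established for every finite dimensional semisimple Hopf algebra, hence for $H_{2}$) --- is routine.
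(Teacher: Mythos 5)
Your route is exactly the one the paper takes: $W_E$ is ordinarily PI (Regev), $H_2$ is again a finite dimensional semisimple Hopf algebra, Berele's Theorem \thmref{affine-rep} supplies the affine $H_2$-module algebra $L=L_{d,l}/id^{H_{2}}(W_{E})$ with $id^{H}(W)=id^{H}(E(L))$, and \thmref{affine-rep-1} (applied with $H_2$ in place of $H$) replaces $L$ by a finite dimensional $A$ with $id^{H_{2}}(L)=id^{H_{2}}(A)$. All the supporting checks you list — Regev, semisimplicity of $(FC_2)^*\cong F\times F$ and of the tensor product, affineness of $L$ and the fact that it still satisfies the ordinary PI $f$, and the functoriality of $E(\cdot)$ that turns $id^{H_2}(L)=id^{H_2}(A)$ into $id^{H}(E(L))=id^{H}(E(A))$ — are all correct and match what the paper is implicitly using when it says ``it is clear that.''

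The one point where you are more careful than the paper is where you flag the equality $id^{H_{2}}(W_{E})=id^{H_{2}}(L)$ as the ``genuinely delicate point.'' That caution is well placed: as the paper states \thmref{affine-rep}, it only asserts $id^{H}(W)=id^{H}(E(L))$, and the passage to $id^{H_{2}}(W_{E})=id^{H_{2}}(A)$ does require the finitely-many-graded-variables reduction for the superalgebra $W_E$, which does not follow from a Capelli bound the way the affine case does (\thmref{relativelly_affine} is not available for $W_E$). That reduction is in fact part of the Kemer/Berele package behind \thmref{affine-rep} — one cannot even prove Berele's theorem without it — so invoking \thmref{affine-rep} as the paper does already smuggles it in; but the paper's one-line proof does not surface this, whereas you do. You also correctly observe that the Grassmann-envelope formulation $W\sim_{H-PI}E(A)$ (which is what \thmref{Hrep!!} actually asserts and what is used downstream for Specht and the exponent) follows outright from $id^{H}(W)=id^{H}(E(L))=id^{H}(E(A))$ with no need for the $W_E$-to-$L$ equality, so your argument is complete for that form of the theorem.
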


\section{\label{sub:Specht-theorem-for}Specht theorem for $H$-module algebras}

In this section we prove \thmref{Specht!!!}:
\begin{thm}
\label{thm:Specht} Suppose $\Gamma$ is an $H$-$T$-ideal containing
an ordinary identity, then there are $f_{1},...,f_{s}\in\Gamma$ which
$H$-$T$-generate $\Gamma$. Equivalently, if $\Gamma_{1}\subseteq\Gamma_{2}\subseteq\cdots$
is an ascending chain of $H$-$T$-ideals containing an ordinary PI,
then the chain stabilizes. 
\end{thm}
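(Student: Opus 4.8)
The plan is to prove the equivalent ascending chain condition, in two stages. First I reduce an arbitrary ascending chain of $H$-$T$-ideals containing a fixed ordinary identity to an ascending chain of $H_{2}$-$T$-ideals all of which satisfy one and the same Capelli identity; this uses the non-affine representability \thmref{rep} together with Berele's \thmref{affine-rep}. Second, for an arbitrary finite dimensional semisimple Hopf $F$-algebra $H'$ (we will apply this to $H'=H_{2}$), I prove the chain condition for $H'$-$T$-ideals that satisfy a Capelli identity, by Noetherian induction on the $H'$-Kemer index, along the lines of the Finalization of \thmref{affine-rep-1} and of \cite{Kanel-Belov2005,Aljadeff2010a}.

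For the reduction, let $\Gamma_{1}\subseteq\Gamma_{2}\subseteq\cdots$ be $H$-$T$-ideals all containing one fixed ordinary identity $f_{0}$, and set $W_{i}=F^{H}\{X\}/\Gamma_{i}$ and $\Delta_{i}=id^{H_{2}}(W_{i}\otimes E)$. Because $\Gamma_{i}\subseteq\Gamma_{i+1}$, the $H$-module algebra $W_{i+1}$ is a quotient of $W_{i}$, so $W_{i+1}\otimes E$ is an $H_{2}$-module quotient of $W_{i}\otimes E$ and $\Delta_{i}\subseteq\Delta_{i+1}$. Each $W_{i}$ satisfies $f_{0}$, so \thmref{affine-rep} — applied with the single pair $(d,l)$ determined by $\deg f_{0}$ — recovers $\Gamma_{i}=id^{H}(W_{i})=id^{H}\big(E(L_{d,l}/\Delta_{i})\big)$ from $\Delta_{i}$; hence $i\mapsto\Delta_{i}$ is injective and it suffices to show that $(\Delta_{i})$ stabilizes. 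By \thmref{rep}, $\Delta_{1}=id^{H_{2}}(A_{1})$ for a finite dimensional $H_{2}$-module algebra $A_{1}$ over some extension field of $F$; a finite dimensional algebra of dimension $n$ satisfies the $(n+1)$th Capelli identity, so $\Delta_{1}$, and hence every $\Delta_{i}\supseteq\Delta_{1}$, satisfies it. As $H_{2}=H\otimes_{F}(FC_{2})^{*}$ is again finite dimensional and semisimple over $F$, this reduces the theorem to the second stage.

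For the second stage, fix a finite dimensional semisimple Hopf algebra $H$ and let $\Gamma$ be an $H$-$T$-ideal satisfying the $t$th Capelli identity; we show $\Gamma$ is finitely based. By \thmref{relativelly_affine}, $\Gamma$ is the ideal of $H$-identities of an affine $H$-module algebra, which also satisfies an ordinary PI (the ordinary $t$th Capelli polynomial, regarded as an $H$-polynomial with all exponents equal to $1_{H}$, is alternating in its main variables and so belongs to $\Gamma$). Thus $\mbox{Ind}(\Gamma)\in\Omega$ is defined (and, by \ref{rem:comparison-1}, is monotone in $\Gamma$), and we induct on $\mbox{Ind}(\Gamma)=p$, the set $\Omega$ being well ordered under the lexicographic order; the base case $p=(0,0)$, where $\Gamma=F^{H}\{X\}$, is clear. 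Let $S$ be the $H$-$T$-ideal generated by all $H$-Kemer polynomials of $\Gamma$. As in the Finalization of \thmref{affine-rep-1}, $\Gamma+S$ has $H$-Kemer index strictly smaller than $p$ and still satisfies the $t$th Capelli identity, so by the induction hypothesis $\Gamma+S=\langle g_{1},\dots,g_{k}\rangle_{H}$. Writing each $g_{j}=g_{j}'+g_{j}''$ with $g_{j}'\in\Gamma$ and $g_{j}''$ in the $H$-$T$-ideal generated by finitely many $H$-Kemer polynomials $\phi_{1},\dots,\phi_{r}$ of $\Gamma$, we obtain $\Gamma+S=\langle g_{1}',\dots,g_{k}',\phi_{1},\dots,\phi_{r}\rangle_{H}$ with all $g_{j}'\in\Gamma$. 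It then remains to show that $\Gamma$ is generated, as an $H$-$T$-ideal, by $g_{1}',\dots,g_{k}'$ together with finitely many further elements of $\Gamma$ extracted from the $\phi_{i}$; this is carried out exactly as in \cite{Kanel-Belov2005,Aljadeff2010a}, combining the Phoenix property of $H$-Kemer polynomials (\corref{phoneix_property}) with the fact that, modulo $\Gamma$, only finitely many $H$-Kemer polynomials are needed to generate $S$ as an $H$-$T$-ideal.

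The technical heart of the argument is precisely this last finiteness statement: $\Gamma$ has infinitely many $H$-Kemer polynomials — of arbitrarily large rank and with arbitrarily long non-alternating parts — yet modulo $\Gamma$ finitely many of them generate the $H$-$T$-ideal $S$. This rests on the $S_{n}$-representation theory of $P^{H}_{n}/(P^{H}_{n}\cap\Gamma)$ (only partitions of height $<t$ occur there) together with \corref{useful_lemma} and the Shirshov--Beidar machinery of Sections \secref{technical} and \secref{Representable-Spaces}, and it is the step I expect to require the most care. By contrast, the two reductions above and the bookkeeping with the Phoenix property are routine once \thmref{affine-rep-1} and \thmref{rep} are in hand.
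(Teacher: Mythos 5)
Your Stage~$1$---passing from general $H$-$T$-ideals to $H_{2}$-$T$-ideals of affine algebras via \thmref{affine-rep} and \thmref{rep}, and then to $T$-ideals satisfying a Capelli identity---is precisely the paper's reduction and is correct.

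Stage~$2$, however, departs from the paper and as written does not close. You try to prove finite-basedness of $\Gamma$ directly by induction on $\mbox{Ind}(\Gamma)$: set $\Gamma'=\Gamma+S$ (strictly smaller index), obtain $\Gamma'=\langle g_{1},\dots,g_{k}\rangle_{H}$ by induction, split $g_{j}=g_{j}'+g_{j}''$ with $g_{j}'\in\Gamma$ and $g_{j}''$ a consequence of finitely many Kemer polynomials $\phi_{1},\dots,\phi_{r}$. The statement you flag as the ``technical heart''---that finitely many Kemer polynomials generate $S$ modulo $\Gamma$---is in fact the \emph{easy} part: it already follows from the decomposition $\Gamma'=\langle g_{1}',\dots,g_{k}',\phi_{1},\dots,\phi_{r}\rangle_{H}$ you just wrote down. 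The genuine difficulty is what comes after. Setting $\Gamma_{0}=\langle g_{1}',\dots,g_{k}'\rangle_{H}\subseteq\Gamma$, the modular law gives $\Gamma=\Gamma\cap\Gamma'=\Gamma_{0}+\bigl(\Gamma\cap\langle\phi_{1},\dots,\phi_{r}\rangle_{H}\bigr)$, so you must show that the $H$-$T$-ideal $\Gamma\cap\langle\phi_{1},\dots,\phi_{r}\rangle_{H}$ is finitely generated. That $T$-ideal is contained in $\Gamma$ and hence has index $\geq p$, so the induction on index does not apply to it, and nothing in your proposal (nor the phrase ``finitely many further elements of $\Gamma$ extracted from the $\phi_{i}$'', which has no clear meaning since the $\phi_{i}$ lie outside $\Gamma$) addresses this. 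The Phoenix property and $B_{\Gamma}$ tell you which consequences of the $\phi_{i}$ are \emph{not} in $\Gamma$; they give no handle on generating the ones that \emph{are}.

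The paper sidesteps this completely by proving the chain condition directly rather than finite basis by induction. For a chain $\Gamma_{i}=id^{H}(A_{i})$ with the $A_{i}$ products of $H$-basic algebras of a common (eventually stabilized) index $p=(d,s)$, it splits each $A_{i}=\hat{A}_{i,1}\times\cdots\times\hat{A}_{i,r_{i}}\times\tilde{A}_{i}$ into the index-$p$ basic factors and a lower-index remainder $\tilde{A}_{i}$, invokes the finiteness (via \cite{Etingof2005}) of $H$-semisimple algebras of dimension $d$ to pass to a subsequence in which only a fixed finite list $R_{1},\dots,R_{t}$ of semisimple parts occurs, and then replaces the index-$p$ factors by dimension-bounded algebras $C_{j,i}=\bigl(R_{j}\ast F^{H}\{x_{1},\dots,x_{s}\}\bigr)/\bigl(id^{H}(A_{i})+\langle X\rangle_{H}^{s}\bigr)$ (a variant of the $\widehat{B}_{u}$ from \propref{Control_on_nilpotency_index}) satisfying $id^{H}(C_{i}\times\tilde{A}_{i})=id^{H}(A_{i})$. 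Since each $C_{i+1}$ is an epimorphic image of $C_{i}$ and dimensions are bounded, the $C_{i}$ eventually all coincide with one algebra $B$, after which the chain $id^{H}(B\times\tilde{A}_{i})$ is squeezed by the modular law using the Kemer-polynomial ideal $I$ of $B$: the sums with $I$ stabilize because they have index $<p$, and the intersections with $I$ are all equal because $I\subseteq id^{H}(\tilde{A}_{i})$ for every $i$. None of this requires any finiteness statement about Kemer polynomials modulo $\Gamma$ or about $\Gamma\cap\langle\phi_{1},\dots,\phi_{r}\rangle_{H}$. To follow your route you would need to supply the missing finite-generation argument for $\Gamma\cap\langle\phi_{1},\dots,\phi_{r}\rangle_{H}$; as it stands that step is absent.
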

Suppose $id^{H}(W_{i})=\Gamma_{i}$. By \thmref{affine-rep} $id^{H}(W_{i})=id^{H}(E(L_{d,l}/id^{H_{2}}(W_{i}\otimes E)))$,
where $d$ and $l$ are the same for all $W_{i}$. Moreover, it is
clear that $id^{H_{2}}(W_{1}\otimes E)\subseteq id^{H_{2}}(W_{2}\otimes E)\subseteq\cdots$
, so 
\[
id^{H_{2}}(E(L_{d,l}/id^{H_{2}}(W_{1}\otimes E)))\subseteq id^{H_{2}}(E(L_{d,l}/id^{H_{2}}(W_{2}\otimes E)))\subseteq\cdots.
\]
Therefore, it is enough to show:
\begin{thm}
\label{thm:specht-affine}If $\Gamma_{1}\subseteq\Gamma_{2}\subseteq\cdots$
is an ascending chain of $H$-$T$-ideals of an affine $H$-module
algebras containing an ordinary PI, then the chain stabilizes. 
\end{thm}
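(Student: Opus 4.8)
The plan is to run the classical Kemer argument, now reduced to the affine setting. Suppose, towards a contradiction, that $\Gamma_1\subseteq\Gamma_2\subseteq\cdots$ is a strictly increasing chain of $H$-$T$-ideals of affine $H$-module algebras, each containing an ordinary PI. Passing to the union $\Gamma=\bigcup_i\Gamma_i$, this is again an $H$-$T$-ideal containing an ordinary PI, hence (by Section~\ref{sec:Getting-started}) it contains the $H$-identities of a finite dimensional $H$-module algebra; in particular $\Gamma$ satisfies some Capelli identity, and since each $\Gamma_i\supseteq id^H(A)$ for a fixed such $A$, \emph{all} the $\Gamma_i$ satisfy that Capelli identity too. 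Therefore the $H$-Kemer index is defined for every $\Gamma_i$, and by Remark~\ref{rem:comparison-1} the indices form a non-increasing (in the reversed lex order, i.e.\ weakly decreasing) sequence in $\Omega=\mathbb{Z}^{\ge0}\times\mathbb{Z}^{\ge0}$. Since $\Omega$ with the lexicographic order is well ordered, this sequence stabilizes: there is $N_0$ with $\mathrm{Ind}(\Gamma_i)=\mathrm{Ind}(\Gamma)=:p$ for all $i\ge N_0$.

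Next I would set up an induction on $p$. For the base case $p=0$ we have $\Gamma=F^H\langle X\rangle$, forcing $\Gamma_i=\Gamma$ for $i\ge N_0$ and the chain is stationary. For the inductive step, let $S$ be the $H$-$T$-ideal generated by all $H$-Kemer polynomials of $\Gamma$ of rank at least $\nu_W$, and set $\Gamma'=\Gamma+S$. By the argument at the end of Section~\ref{sec:technical} (the same one used to finalize Theorem~\ref{thm:affine-rep-1}), adjoining all Kemer polynomials strictly drops the Kemer index, so $\mathrm{Ind}(\Gamma')<p$; and $\Gamma'$ still contains an ordinary PI. Applying the inductive hypothesis to the chain $\Gamma_i+S$ (eventually constant in index $<p$, being squeezed between $\Gamma'$ and itself) shows that $\{\Gamma_i+S\}$ stabilizes, say from index $N_1\ge N_0$ on. It remains to recover stabilization of $\Gamma_i$ itself from stabilization of $\Gamma_i+S$ together with the fact that, for $i\ge N_0$, $\Gamma_i$ and $\Gamma$ have the same $H$-Kemer index $p$.

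The key point is then the following: for $i\ge N_0$, any $H$-Kemer polynomial of $\Gamma_i$ of sufficiently large rank already lies in $\Gamma_{N_0}$ — indeed, such a polynomial, being a nonidentity of $\Gamma_i$ with index exactly $p$, would otherwise witness an index drop or an extra alternating big set in some later $\Gamma_j$, contradicting $\mathrm{Ind}(\Gamma_j)=p$. More precisely, using the representable algebra $B_\Gamma$ of Section~\ref{sec:Representable-Spaces} attached to $\Gamma$ (so $id^H(B_\Gamma)\supseteq\Gamma$ and all $H$-Kemer polynomials of $\Gamma$ of rank $\ge\mu+1$ are nonidentities of $B_\Gamma$), one checks that $\Gamma_i=\Gamma_i\cap(\,id^H(B_\Gamma)\cap(\Gamma_i+S)\,)$: if $f\in(\Gamma_i+S)\cap id^H(B_\Gamma)$ but $f\notin\Gamma_i$, then $f\in S$ modulo $\Gamma_i$, so by the Phoenix property (Corollary~\ref{cor:phoneix_property}) some consequence $f'\in\langle f\rangle_H$ is an $H$-Kemer polynomial of $\Gamma$ of rank $\ge\mu+1$, whence $f'\notin id^H(B_\Gamma)$ by Corollary~\ref{cor:Kemer polynomials_on_the_representable_algebra} — contradicting $f\in id^H(B_\Gamma)$. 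Thus $\Gamma_i$ is determined by the pair $(\,\Gamma_i+S,\;id^H(B_\Gamma)\,)$, and since $\Gamma_i+S$ is eventually constant while $B_\Gamma$ is fixed, $\Gamma_i$ is eventually constant.

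The main obstacle is the last step: making rigorous the assertion that, once the Kemer index has stabilized and a single representable $B_\Gamma$ with the Phoenix-compatible properties has been fixed for the limit ideal $\Gamma$, the individual ideals $\Gamma_i$ are recovered from $\Gamma_i+S$. This is where the Phoenix property and the representable-space construction do the real work — exactly as in the finalization of Theorem~\ref{thm:affine-rep-1}, except that there one had a single $\Gamma$ and here one must apply the construction uniformly to the whole tail of the chain. Everything else (well-ordering of $\Omega$, existence of the Capelli identity along the chain, the reduction to the affine case via Theorem~\ref{thm:affine-rep}) is routine. I expect the write-up to be short, essentially quoting Corollaries~\ref{cor:phoneix_property} and~\ref{cor:Kemer polynomials_on_the_representable_algebra} and the finalization argument of Section~\ref{sec:Finalization of} verbatim.
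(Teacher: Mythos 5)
Your approach differs genuinely from the paper's, and unfortunately it has a real gap at the crucial step. The paper does \emph{not} run a ``union + Phoenix'' argument; instead it uses Theorem~\ref{thm:affine-rep-1} to write $\Gamma_i = id^H(A_i)$ with $A_i$ a product of $H$-basic algebras, splits each $A_i = \hat{A}_{i,1}\times\cdots\times\hat{A}_{i,r_i}\times\tilde{A}_i$ into the components of maximal Kemer index $p$ and the rest, invokes the finiteness of isomorphism types of $H$-semisimple algebras (Etingof--Ostrik) to fix the semisimple parts $R_1,\ldots,R_t$, and builds finite-dimensional algebras $C_i$ (quotients of $R_j\ast F^H\{X\}$) that form a chain of $H$-epimorphisms and hence stabilize to a fixed $B$. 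Only then does it run the inductive/modular argument: with $I$ the $H$-$T$-ideal of Kemer polynomials of the \emph{fixed} algebra $B$, one has $I\subseteq id^H(\tilde{A}_i)$ for all $i$, so $id^H(B\times\tilde{A}_i)\cap I = id^H(B)\cap I$ is constant, while $id^H(B\times\tilde{A}_i)+I$ is a chain of smaller Kemer index that stabilizes by induction; the modular law then forces $id^H(B\times\tilde{A}_i)$ to stabilize.

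The gap in your proposal is precisely the missing analogue of ``$\Gamma_i\cap I$ is constant.'' Your identity $\Gamma_i=\Gamma_i\cap\bigl(id^H(B_\Gamma)\cap(\Gamma_i+S)\bigr)$ is vacuous (the right-hand side is trivially $\Gamma_i$), and what you actually want is $\Gamma_i=id^H(B_\Gamma)\cap(\Gamma_i+S)$. But the Phoenix/Corollary~\ref{cor:Kemer polynomials_on_the_representable_algebra} argument only applies to an $f\notin\Gamma$; for $f\in\Gamma\setminus\Gamma_i$ there is no contradiction (such $f$ lies in $id^H(B_\Gamma)$ automatically). Tracing through, what the Phoenix argument genuinely yields is $(\Gamma_i+S)\cap id^H(B_\Gamma)=\Gamma_i+(S\cap\Gamma)$, which still depends on $i$ and does \emph{not} recover $\Gamma_i$ from $\Gamma_i+S$. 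Similarly, your claim that any Kemer polynomial of $\Gamma_i$ of large rank ``already lies in $\Gamma_{N_0}$'' is false on its face: Kemer polynomials of $\Gamma_i$ are by definition nonidentities of $\Gamma_i$, hence outside $\Gamma_{N_0}\subseteq\Gamma_i$. Finally, even the step ``$\mathrm{Ind}(\Gamma_i+S)<p$'' is not immediate when $S$ is generated by Kemer polynomials of the \emph{union} $\Gamma$ rather than of $\Gamma_i$: a witness to large index of $\Gamma_i+S$ must avoid both $\Gamma_i$ and $S$, but it could lie in $\Gamma\setminus\Gamma_i$ and thus fail to be a Kemer polynomial of $\Gamma$. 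To repair all of this, you essentially need a \emph{single} fixed auxiliary algebra whose Kemer $T$-ideal $I$ is contained in $id^H$ of the ``small part'' of every $\Gamma_i$; that is exactly what the paper's construction of $B$ (via the $C_i$) accomplishes, and there is no shortcut via $B_\Gamma$ alone.
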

By \thmref{affine-rep-1} we can assume $id^{H}(A_{i})=\Gamma_{i}$,
where $A_{i}$ is a product of $H$-basic algebras. Moreover, since
the $H$-Kemer index is an order reversing function, the $H$-Kemer
index is eventually stabilizes. Thus, we suppose from the beginning
that all the $A_{i}$ have the same $H$-Kemer index $p=(d,s)$. Write
\[
A_{i}=\hat{A}_{i,1}\times\cdots\times\hat{A}_{i,r_{i}}\times\tilde{A}_{i},
\]
where $\bar{A}_{i,1},...,\bar{A}_{i,r_{i}}$ are $H$-basic of index
$p$ and $\tilde{A}_{i}$ is a product of $H$-basic algebras of lower
index. Since, due to \cite{Etingof2005}, the number of non $H$-isomorphic
$H$-semisimple algebras of dimension $d$ is finite, by passing to
a subsequence, we may also assume that there is a fixed set of $H$-semisimple
algebras $R_{1},...,R_{t}$ such that: 
\[
\{R_{1},...,R_{t}\}=\{\hat{A}_{i,1,ss},...,\hat{A}_{i,r_{i},ss}\}
\]
for all $i$ (here $\hat{A}_{i,r_{i},ss}$ is the semisimple part
of $\hat{A}_{i,r_{i}}$). 

Let 
\[
C_{j,i}=\frac{\bar{C_{j,i}}=R_{j}\ast F^{H}\left\{ X=\{x_{1},...,x_{s}\}\right\} }{id_{\bar{C}_{j,i}}^{H}(A_{i})+\left\langle X\right\rangle _{H}^{s}}
\]
for every $j=1...t$. Finally, write $C_{i}=C_{1,i}\times\cdots\times C_{t,i}$.
\begin{lem}
$id^{H}(C_{i}\times\tilde{A}_{i})=id^{H}(A_{i})$.\end{lem}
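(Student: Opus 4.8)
The plan is to prove the two inclusions separately. The inclusion $id^{H}(A_{i})\subseteq id^{H}(C_{i}\times\tilde{A}_{i})$ is purely formal: by definition $C_{j,i}$ is a quotient of $\bar{C}_{j,i}=R_{j}\ast F^{H}\{X\}$ by an $H$-ideal containing $id^{H}_{\bar{C}_{j,i}}(A_{i})$, the set of all evaluations of $id^{H}(A_{i})$ on $\bar{C}_{j,i}$, so every $H$-polynomial of $id^{H}(A_{i})$ vanishes on $C_{j,i}$; thus $id^{H}(A_{i})\subseteq id^{H}(C_{j,i})$ for every $j$, and since $\tilde{A}_{i}$ is a direct factor of $A_{i}$ also $id^{H}(A_{i})\subseteq id^{H}(\tilde{A}_{i})$. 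Intersecting gives $id^{H}(A_{i})\subseteq id^{H}(C_{i}\times\tilde{A}_{i})$. All the content lies in the reverse inclusion.

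For $id^{H}(C_{i}\times\tilde{A}_{i})\subseteq id^{H}(A_{i})$ it suffices, since an $H$-$T$-ideal is generated by its multilinear elements, to show that every multilinear $f\notin id^{H}(A_{i})$ is a nonidentity of one of the factors $C_{1,i},\dots,C_{t,i},\tilde{A}_{i}$. Since $A_{i}=\hat{A}_{i,1}\times\cdots\times\hat{A}_{i,r_{i}}\times\tilde{A}_{i}$, such an $f$ is already a nonidentity of one of these factors; if it is a nonidentity of $\tilde{A}_{i}$ we are done, so assume $f\notin id^{H}(\hat{A}_{i,k})$ for some $k$. Put $R_{j}=\overline{\hat{A}_{i,k}}$ ($=\hat{A}_{i,k,ss}$), which by the choice of the fixed list is one of $R_{1},\dots,R_{t}$, so that $C_{j,i}$ is a factor of $C_{i}$; it remains to prove $f\notin id^{H}(C_{j,i})$.

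The key step is to construct an $H$-algebra homomorphism $\bar{\varphi}\colon C_{j,i}\to\hat{A}_{i,k}$ and to push a witnessing evaluation of $f$ across it. Since $\hat{A}_{i,k}$ is $H$-basic of $H$-Kemer index $p=(d,s)$, its parameter is also $(d,s)$ (the Corollary to \lemref{Kemer's_Lemma_2}), i.e. $J(\hat{A}_{i,k})$ is nilpotent of index $s+1$. Fix a nonzero evaluation of $f$ on $\hat{A}_{i,k}$ in which every variable is assigned a basis element adapted to the $H$-invariant splitting $\hat{A}_{i,k}=\overline{\hat{A}_{i,k}}\oplus J(\hat{A}_{i,k})$; because $J(\hat{A}_{i,k})^{s+1}=0$, at most $s$ of the variables of $f$ receive radical values in this evaluation, so at most $s=|X|$ distinct radical basis elements $w_{1},\dots,w_{m}$ occur. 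Define $\varphi\colon\bar{C}_{j,i}=R_{j}\ast F^{H}\{X\}\to\hat{A}_{i,k}$ to be the fixed $H$-isomorphism $R_{j}\cong\overline{\hat{A}_{i,k}}$ on $R_{j}$ together with $\varphi(x_{a})=w_{a}$ for $a\le m$ and $\varphi(x_{a})=0$ for $a>m$. Since $id^{H}(A_{i})\subseteq id^{H}(\hat{A}_{i,k})$, $\varphi$ annihilates $id^{H}_{\bar{C}_{j,i}}(A_{i})$; and since $J(\hat{A}_{i,k})$ is $H$-invariant, $\varphi(\langle X\rangle_{H})\subseteq J(\hat{A}_{i,k})$, so $\varphi$ annihilates $\langle X\rangle_{H}^{s+1}$. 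Hence $\varphi$ descends to $\bar{\varphi}\colon C_{j,i}\to\hat{A}_{i,k}$. Now lift the chosen evaluation of $f$ to $C_{j,i}$ by replacing each semisimple value by its $\varphi$-preimage in $R_{j}\subseteq C_{j,i}$ and each radical value $w_{a}$ by $x_{a}$; calling this evaluation $\bar{x}$, we get $\bar{\varphi}\bigl(f(\bar{x})\bigr)$ equal to the original nonzero value of $f$ on $\hat{A}_{i,k}$, so $f(\bar{x})\neq0$ in $C_{j,i}$ and $f\notin id^{H}(C_{j,i})$. This establishes $id^{H}(C_{i}\times\tilde{A}_{i})\subseteq id^{H}(A_{i})$, hence the lemma.

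The only delicate point is that the two integer parameters of the construction of $C_{j,i}$ — the number $|X|$ of free generators and the power of $\langle X\rangle_{H}$ factored out — must be fixed uniformly in $i$ while remaining adequate for every basic factor $\hat{A}_{i,k}$ that can occur. Both are pinned down by the common $H$-Kemer index $p=(d,s)$: every such $\hat{A}_{i,k}$ has radical of nilpotency index $s+1$ (Corollary to \lemref{Kemer's_Lemma_2}), so factoring out $\langle X\rangle_{H}^{s+1}$ is sufficient; and a nonzero multilinear evaluation on it involves at most $s$ radical substitutions, so $|X|=s$ generators suffice to carry the radical basis elements. (The finiteness of the list $R_{1},\dots,R_{t}$ of possible $H$-semisimple parts, \cite{Etingof2005}, is not used in this lemma itself, but it is what makes each $C_{j,i}$ a quotient of one of finitely many fixed finite-dimensional $H$-module algebras, the feature exploited in the ensuing Specht argument.)
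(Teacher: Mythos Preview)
Your argument is correct and follows essentially the same route as the paper: reduce to a multilinear nonidentity of one basic factor $\hat{A}_{i,k}$, pick a nonzero evaluation with at most $s$ radical values, and lift it to $C_{j,i}$ by sending the semisimple values into $R_{j}$ and the radical values to the free variables $x_{1},\dots,x_{l}$. The paper does this in one line (``$f(x_{1},\dots,x_{l},\bar{x}_{l+1},\dots,\bar{x}_{n})$ is not zero in $C_{1,i}$''), relying implicitly on the same specialization argument used in the proof of \propref{Control_on_nilpotency_index}; you make it explicit by building the descended homomorphism $\bar{\varphi}\colon C_{j,i}\to\hat{A}_{i,k}$ and pushing the lifted evaluation through it. Your final paragraph also catches the off-by-one in the paper's setup: since $\hat{A}_{i,k}$ has nilpotency index $s+1$, a nonzero evaluation may involve $l=s$ radical values, so one should really factor out $\langle X\rangle_{H}^{s+1}$ (the paper writes $\langle X\rangle_{H}^{s}$ and ``$l<s$'').
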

\begin{proof}
Clearly, $id^{H}(A_{i})\subseteq id^{H}(C_{i}\times\tilde{A}_{i})$.
Since $id^{H}(A_{i})=id^{H}(\hat{A}_{i,1}\times\cdots\times\hat{A}_{i,r_{i}}\times\tilde{A}_{i})$,
it is enough to show that $id^{H}(C_{i})\subseteq id^{H}(\hat{A}_{i,1}\times\cdots\times\hat{A}_{i,r_{i}})$.
So we prove that if $f=f(x_{1},...,x_{n})$ is a multilinear non-identity
of some $\hat{A}_{i,j}$, (say with semisimple part equal to $R_{1}$)
then $f$ is also a non-identity of $C_{1,i}$. Indeed, choose a non-zero
evaluation of $f$ by elements of $\hat{A}_{i,j}$. Suppose $\bar{x}_{1},..,\bar{x}_{l}$
are radical (so $l<s$) and the rest are semisimple (i.e. in $R_{1}$).
Thus, $f(x_{1},...,x_{l},\bar{x}_{l+1},...,\bar{x}_{n})$ is not zero
in $C_{1,i}$.\end{proof}
\begin{lem}
For $i$ large enough $C_{i}=C_{i+1}=\cdots(=B)$.\end{lem}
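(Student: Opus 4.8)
The plan is to exploit the fact that in the definition of $C_{j,i}$ the numerator $\bar{C}_{j}=R_{j}\ast F^{H}\{x_{1},\dots,x_{s}\}$ is the \emph{same} algebra for every $i$; only the $H$-ideal $I_{j,i}=id_{\bar{C}_{j}}^{H}(A_{i})+\langle X\rangle_{H}^{s}$ that is factored out changes with $i$. First I would note that since the chain is ascending, $id^{H}(A_{i})\subseteq id^{H}(A_{i+1})$, so every evaluation on $\bar{C}_{j}$ of a polynomial from $id^{H}(A_{i})$ is a fortiori such an evaluation of a polynomial from $id^{H}(A_{i+1})$; hence $I_{j,1}\subseteq I_{j,2}\subseteq\cdots$, and for each fixed $j$ we obtain a chain of canonical surjective $H$-module algebra homomorphisms $C_{j,1}\twoheadrightarrow C_{j,2}\twoheadrightarrow\cdots$ induced by these inclusions.

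Next I would show that $\dim_{F}C_{j,i}$ is bounded above by a constant that does not depend on $i$ (nor on $j$). The point is that $C_{j,i}$ is a quotient of $\bar{C}_{j}/\langle X\rangle_{H}^{s}$, which — exactly as in the proof of \propref{Control_on_nilpotency_index}(2) — is spanned over $F$ by the elements $b_{1}z_{1}^{g_{1}}b_{2}\cdots b_{k}z_{k}^{g_{k}}b_{k+1}$ with $k<s$, where the $b_{\ell}$ run through a fixed $F$-basis of $R_{j}$, the $z_{\ell}$ through $\{x_{1},\dots,x_{s}\}$, and the $g_{\ell}$ through a fixed $F$-basis of $H$. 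The number of such monomials is bounded solely in terms of $s$, $\dim_{F}H$ and $\dim_{F}R_{j}$, and $R_{j}$ ranges over the fixed finite set $\{R_{1},\dots,R_{t}\}$; so one bound works for all $i$ and all $j$.

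Then I would conclude: $\dim_{F}C_{j,1}\geq\dim_{F}C_{j,2}\geq\cdots$ is a non-increasing sequence of non-negative integers, hence eventually constant, say from some index $N_{j}$ onward. For $i\geq N_{j}$ the surjection $C_{j,i}\twoheadrightarrow C_{j,i+1}$ is then a surjection between $F$-spaces of equal finite dimension, hence an isomorphism; since it is induced by the inclusion $I_{j,i}\subseteq I_{j,i+1}$, this forces $I_{j,i}=I_{j,i+1}$, i.e. $C_{j,i}=C_{j,i+1}$. Setting $N=\max\{N_{1},\dots,N_{t}\}$ one gets $C_{i}=C_{1,i}\times\cdots\times C_{t,i}=C_{i+1}=\cdots$ for all $i\geq N$, and one takes $B=C_{N}$.

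The argument is essentially bookkeeping once the two structural inputs are available, so the step to be careful about is precisely that the dimensions $\dim_{F}C_{j,i}$ are finite and uniformly bounded in $i$: finiteness comes from the truncation $\langle X\rangle_{H}^{s}$ that is built into the construction of $C_{j,i}$, while uniformity in $i$ comes from having passed, before the lemma, to a subsequence along which the semisimple parts $R_{1},\dots,R_{t}$ are fixed — which itself rests on the finiteness of the set of $H$-semisimple algebras of bounded dimension (\cite{Etingof2005}). Everything else — monotonicity of the $I_{j,i}$, the resulting descending chain of quotients, and the stabilization of a non-increasing sequence of non-negative integers — is immediate.
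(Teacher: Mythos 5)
Your proposal is correct and follows essentially the same route as the paper: a descending chain of finite-dimensional $H$-epimorphic images must have stabilizing dimensions, hence the surjections eventually become isomorphisms. You spell out two points that the paper only asserts in passing — why the surjections $C_{j,i}\twoheadrightarrow C_{j,i+1}$ exist (the numerator $\bar{C}_j$ is fixed while the $H$-ideals $I_{j,i}$ form an ascending chain), and why a dimension-preserving canonical surjection forces literal equality of the ideals rather than merely an abstract isomorphism — but these are details of the same argument, not a different approach.
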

\begin{proof}
It is clear that $C_{i+1}$ is an $H$-epimorphic image of $C_{i}$,
so $\dim_{F}C_{i}\geq\dim C_{i+1}$. However, all the $C_{i}$ are
finite dimensional, so for $i$ large enough the sequence of dimensions
stabilizes. Hence, $C_{i+1}$ is $H$-isomorphic to $C_{i}$.
\end{proof}
We are ready to conclude the proof of \thmref{specht-affine} (and
thus also of \thmref{Specht}).

We are in the following situation:
\[
id^{H}(B\times\tilde{A}_{1})\subseteq id^{H}(B\times\tilde{A}_{2})\subseteq\cdots,
\]
where the index of $B$ is $p$ and the index of $\tilde{A}_{i}$
is smaller than $p$. Assume by induction on the $H$-Kemer index
that \thmref{specht-affine} holds for $H$-$T$-ideal of index smaller
than $p$. Denote by $I$ the $H$-$T$-ideal generated by all $H$-Kemer
polynomials of $B$. Clearly the sequence 
\[
id^{H}(B\times\tilde{A}_{1})+I\subseteq id^{H}(B\times\tilde{A}_{2})+I\subseteq\cdots
\]
stabilizes (by induction). Moreover, for all $i$ and $j$:
\[
id^{H}(B\times\tilde{A}_{i})\cap I=id^{H}(B\times\tilde{A}_{j})\cap I
\]
because $I\subseteq id^{H}(\tilde{A}_{i})\cap id^{H}(\tilde{A}_{j})$.
Therefore, the original sequence also stabilizes.

\section{\label{sub:The-exponent-of}The exponent of $H$-module algebras}

In this section we prove \thmref{exp}. 

Let $W$ be an $H$-module algebra satisfying an ordinary PI. By \thmref{Hrep!!}
we may assume that $W=G(A)$, where $A$ is an $H_{2}$-module finite
dimensional algebra. Gordienko in \cite{Gordienko2013} showed that
\thmref{exp} holds when $W$ is finite dimensional. So it will be
enough to prove that $\exp^{H}(W)=\exp^{H_{2}}(A)$. The idea is to
combine the following key theorem (of Gordienko) with the technique
of Giambruno and Zaicev in \cite{Giambruno2005} (Chapter 6.3).
\begin{thm}[Lemma 10 in \cite{Gordienko2013}]
 For almost all $n$ there is an $n$-multilinear $H_{2}$-polynomial
$f(X_{1},...,X_{\mu},X_{0})\notin id^{H_{2}}(A)$ such that $|X_{0}|<\alpha$
(for $\alpha$ not depended on $n$), $|X_{1}|=\cdots=|X_{\mu}|=\exp^{H_{2}}(A)$
and $f$ is alternating on each one of the $X_{i}$.
\end{thm}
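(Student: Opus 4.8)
The plan is to reduce to an $H_{2}$-basic algebra, to identify $\exp^{H_{2}}(A)$ with the dimension of its semisimple part, and then to extract $f$ from Kemer's Lemma~$1$, adjusting the degree afterwards. Throughout, the machinery developed in this paper for $H$ applies verbatim to the finite dimensional semisimple Hopf algebra $H_{2}=H\otimes_F(FC_2)^{*}$. First I would invoke Remark~\ref{rem:basic_is_everywhere} to write $A\sim_{H_{2}\text{-}PI}B_{1}\times\cdots\times B_{q}$ with each $B_{i}$ $H_{2}$-basic. Since $id^{H_{2}}(A)=\bigcap_{i}id^{H_{2}}(B_{i})$, we have $\max_{i}c_{n}^{H_{2}}(B_{i})\le c_{n}^{H_{2}}(A)\le q\cdot\max_{i}c_{n}^{H_{2}}(B_{i})$, whence $\exp^{H_{2}}(A)=\max_{i}\exp^{H_{2}}(B_{i})$, and a multilinear polynomial is a nonidentity of $A$ as soon as it is a nonidentity of some $B_{i}$. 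So it suffices to treat the $H_{2}$-basic component $B:=B_{i_{0}}$ realizing the maximum; I may therefore assume $A$ itself is $H_{2}$-basic, and write $\overline{A}=A_{1}\times\cdots\times A_{q}$, $J=J(A)$, $n_{A}$ for the nilpotency index of $J$, and $d:=\dim_{F}\overline{A}$.

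The heart of the argument is the identity $\exp^{H_{2}}(A)=d$ for an $H_{2}$-basic $A$. For the upper bound $\exp^{H_{2}}(A)\le d$ I would use the standard finite dimensional estimate: in any nonzero multilinear evaluation on $A$ at most $n_{A}-1$ variables obtain radical values while the remaining variables range over the $d$-dimensional space $\overline{A}$, which yields $c_{n}^{H_{2}}(A)\le d^{\,n}\cdot p(n)$ for a fixed polynomial $p$. For the lower bound I would use that $A$, being $H_{2}$-basic, is full, so Kemer's Lemma~$1$ (Lemma~\ref{lem:full-folds}) provides, for every $\mu$, a multilinear nonidentity $g_{\mu}$ of $A$ which is $\mu$-fold alternating on pairwise disjoint sets of cardinality $d$. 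Via the $S_{n}$-representation theory recalled in Section~\ref{sec:technical}, the presence of $g_{\mu}$ forces $P_{n}^{H_{2}}(A)$ to contain an irreducible $S_{n}$-submodule indexed by a Young diagram with $\mu$ columns of height $d$, whose dimension is $d^{\,n}$ up to a polynomial factor (hook-length formula and Stirling), whence $c_{n}^{H_{2}}(A)\ge d^{\,n}/p(n)$ and therefore $\exp^{H_{2}}(A)=d$. (Alternatively, for $H_{2}$-basic $A$ the equality $\exp^{H_{2}}(A)=d$ is immediate from Gordienko's exponent formula for finite dimensional $H$-module algebras, since in a basic algebra the whole semisimple part is connected through the radical.)

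It remains to force the degree to equal the prescribed $n$ for all large $n$. Inspecting the construction behind Lemma~\ref{lem:full-folds} (equivalently, the one used in the proof of Lemma~\ref{lem:Kemer's_Lemma_2}), $g_{\mu}$ is built from a fixed nonidentity by substituting $\mu$-fold alternating polynomials for finitely many of its variables and by inserting finitely many frame variables; hence the number $k_{g}$ of variables of $g_{\mu}$ outside the $\mu$ distinguished alternating sets is bounded by a constant $k_{0}=k_{0}(A)$ independent of $\mu$, and $\deg g_{\mu}=\mu d+k_{g}$. Moreover the frame variables are evaluated at primitive $H$-invariant idempotents $1_{A_{j}}$ (exactly as in the proof of Lemma~\ref{lem:Kemer's_Lemma_2}), and replacing one such frame variable $z$ by a product $z_{1}\cdots z_{\ell}$ with each $z_{i}$ evaluated at the same idempotent leaves the alternating structure, the chosen evaluation and the nonidentity status unchanged while adding $\ell-1$ variables. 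Thus for every $k\ge k_{0}$ there is a multilinear nonidentity of $A$, $\mu$-fold alternating on pairwise disjoint sets of cardinality $d=\exp^{H_{2}}(A)$, of degree exactly $\mu d+k$. Given $n\ge k_{0}+d$, put $\mu:=\lfloor(n-k_{0})/d\rfloor$ and $k:=n-\mu d$; then $k_{0}\le k<k_{0}+d$, the $\mu$ distinguished sets serve as $X_{1},\dots,X_{\mu}$ (each of cardinality $\exp^{H_{2}}(A)$), and the remaining variables form $X_{0}$ with $|X_{0}|=k<\alpha:=k_{0}+d$, a constant independent of $n$. I expect the main obstacle to be precisely the equality $\exp^{H_{2}}(A)=\dim_{F}\overline{A}$ for an $H_{2}$-basic algebra, and specifically its lower bound — turning the alternating structure of the Kemer polynomials into the sharp asymptotic lower bound for $c_{n}^{H_{2}}(A)$; the reduction to basic algebras and the degree bookkeeping are then routine, the latter relying only on the explicit internal shape of the polynomials constructed in the proof of Kemer's Lemma~$2$.
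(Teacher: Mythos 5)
The paper does not prove this statement; it cites it as Lemma 10 in \cite{Gordienko2013}, so there is no internal proof to compare against. Your reconstruction is a reasonable sketch of the standard route and, as far as I can tell, the strategy (reduce to an $H_{2}$-basic factor realising the exponent, identify $\exp^{H_{2}}$ with $\dim_{F}\overline{A}$ there, apply Kemer's Lemma $1$, and pad the degree) is the same one Gordienko follows, so this is more of a filling-in than a genuinely different argument.

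Two points need more care before I would call the sketch a proof. First, the upper bound $c_{n}^{H_{2}}(A)\le d^{\,n}p(n)$ is not obtained by merely noting that radical variables are bounded in number; the correct justification is that the evaluation map $P_{n}^{H_{2}}\to\mathrm{Mult}(A^{n},A)$ factors through the subspace of multilinear maps vanishing on tuples with at least $n_{A}$ radical entries, whose dimension is $\bigl(\sum_{k<n_{A}}\binom{n}{k}(\dim J)^{k}d^{\,n-k}\bigr)\cdot\dim A\le d^{\,n}p(n)$; as written the sentence reads like a count of evaluations rather than a bound on the rank. Second, the padding step assumes the Kemer polynomial has a ``frame variable'' to split; when $A$ is $H_{2}$-simple (or more generally $J(A)=0$) the construction behind Lemma~\ref{lem:full-folds} needs no frame variables at all, and you should instead append bridge variables $y_{1},\dots,y_{\ell}$ evaluated at the unit $1_{A}$ (which exists for $H_{2}$-simple algebras over a field with $H_{2}$ semisimple), noting that right-multiplication by $1_{A}$ preserves both the nonvanishing and the alternating structure on the $X_{i}$. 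Finally, the ``alternatively'' remark, invoking Gordienko's exponent formula, is circular in this context, since that formula in \cite{Gordienko2013} is derived jointly with the very lemma you are trying to prove; it should be dropped or replaced by the self-contained lower/upper bound argument you already outline.
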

The number $d$ in the above theorem can be computed in the following
fashion: Consider the $H$-Wedderburn-Malcev decomposition of $A$:
\[
A=J(A)\oplus R_{1}\times\cdots\times R_{q},
\]
where the $R_{i}$ are $H_{2}$-simple. Then 
\[
\exp^{H_{2}}(A)=\max\left\{ {\sum_{k=1}^{t}\dim R_{i_{k}}|R_{i_{1}}J(A)\cdots J(A)R_{i_{t}}\neq0\mbox{ and }i_{1},...,i_{t}\mbox{ are distinct}}\right\} .
\]
Moreover, we know that $A$ can be replaced by a product of $H$-basic
algebras $A_{1}\times\cdots\times A_{p}$. It is obvious that $\exp^{H_{2}}(A)=\max_{i}\exp^{H_{2}}(A_{i})$.
Furthermore, the exponent of $A_{i}$ (since $A_{i}$ is full) is
exactly $\alpha(A_{i})$ - the first component of the $H_{2}$-Kemer
index of $A_{i}$. Thus, $\exp^{H_{2}}(A)=\alpha(A)$. Using the construction
in \lemref{Kemer's_Lemma_2} and the previous theorem we obtain: 
\begin{thm}
For almost all $n$ there is an $n$-multilinear $H_{2}$-Kemer polynomial
$f=f(X_{1},...,X_{\mu+s},X_{0})\notin id^{H_{2}}(A)$ such that $|X_{0}\cup X_{\mu+1}\cup\cdots\cup X_{\mu+s}|<\beta$
(for $\beta$ not depended on $n$), $X_{1},...,X_{\mu}$ are the
small sets and $X_{\mu+1},...,X_{\mu+s}$ are the big sets.
\end{thm}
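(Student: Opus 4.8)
The plan is to combine the previous theorem (the Gordienko-type statement producing, for almost all $n$, a multilinear non-identity $f(X_1,\dots,X_\mu,X_0)\notin id^{H_2}(A)$ with $|X_0|<\alpha$ and all $|X_i|=\exp^{H_2}(A)=\alpha(A)$) with the explicit construction used in the proof of \lemref{Kemer's_Lemma_2}. Recall that $A$ may be replaced by a finite product of $H_2$-basic algebras, and $\exp^{H_2}(A)=\max_i\exp^{H_2}(A_i)=\max_i\alpha(A_i)=\alpha(A)=d$; fix a basic component $A_i$ realizing this maximum. The input polynomial $f$ already carries arbitrarily many (namely $\mu$, which grows with $n$) alternating small sets of cardinality $d$, plus a bounded remainder $X_0$ with $|X_0|<\alpha$ (here I should be careful: the bound in the source theorem is on $|X_0|$ in terms of a constant; I will call it $\alpha$ and treat it as fixed). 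So the only thing missing to make $f$ an $H_2$-Kemer polynomial of $A$ is the $s=n_{A}-1$ alternating \emph{big} sets of cardinality $d+1$.

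First I would feed $f$ into the machinery of the proof of \lemref{Kemer's_Lemma_2}: that proof takes a non-identity with many small alternating folds of cardinality $d$ (produced there via Kemer's Lemma $1$, \lemref{full-folds}) and, by inserting frame variables next to the radical variables appearing in a chosen non-vanishing evaluation and then alternating each radical variable into a fresh small set, manufactures precisely $n_A-1$ big sets of cardinality $d+1$ while keeping the polynomial a non-identity. The crucial point for the quantitative statement is bookkeeping: the number of radical variables in any non-zero evaluation on $A$ is at most $n_A-1$, a constant; each such radical variable absorbs the $d$ variables of one small set together with $O(1)$ frame/auxiliary variables to form a big set of size $d+1$; and the four cases $(1,1),(2,1),(1,2),(2,2)$ in that proof all add only a bounded number of extra variables. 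Hence the ``big part plus leftover'' $X_0\cup X_{\mu+1}\cup\cdots\cup X_{\mu+s}$ has cardinality bounded by $|X_0| + s(d+1) + (\text{bounded number of frame variables}) < \beta$ for a constant $\beta$ depending only on $A$ (through $\alpha$, $d$ and $n_A$), not on $n$. The remaining $\mu$ small sets (for $\mu$ still growing with $n$) are untouched, so $f$ becomes an $H_2$-Kemer polynomial of $A$ of rank $\mu$ that is still a non-identity.

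The main obstacle I anticipate is purely bookkeeping rather than conceptual: one must verify that applying the \lemref{Kemer's_Lemma_2} construction does \emph{not} consume a number of small sets that grows with $n$, and does not inflate $X_0$ by more than a constant. Concretely, in \lemref{Kemer's_Lemma_2} the small folds are themselves produced by \lemref{full-folds} as part of the argument; here instead we already have them from the Gordienko theorem, so I must check that the frame-variable insertion and the alternation of each radical variable $w_i$ into a (new) small set $U_l^1\cup\cdots\cup U_l^q$ goes through verbatim when the small sets are supplied externally — the key properties needed are only that each small set has cardinality $d=\dim_F(\overline{A})$ and that in any non-zero evaluation it must visit every $H_2$-simple component (which holds because $|U_l|=\dim_F\overline{A}$ forces this, exactly as in the second paragraph of the proof of \lemref{Kemer's_Lemma_2}), together with property $K$ for the ``same-component'' radical variables. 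Provided $f$ (or a consequence of it, obtainable by the Phoenix property \corref{phoneix_property} since $\mu\geq\nu_W$) is a non-identity of a basic component with the requisite full and property-$K$ behaviour, the construction applies and yields the claimed $f=f(X_1,\dots,X_{\mu+s},X_0)$ with the stated uniform bound on $|X_0\cup X_{\mu+1}\cup\cdots\cup X_{\mu+s}|$.

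Finally I would record that this is exactly the shape of non-identity needed downstream: the $\mu$ small sets of size $\exp^{H_2}(A)$ and the $s$ big sets of size $\exp^{H_2}(A)+1$, all alternating, with everything else bounded, is what lets one run the Giambruno–Zaicev lower-bound technique of \cite{Giambruno2005} (Chapter 6.3) to conclude $\exp^{H}(W)\geq\exp^{H_2}(A)$, which together with the straightforward upper bound $\exp^{H}(W)\leq\exp^{H_2}(A)$ (each multilinear $H$-identity of $W=G(A)$ pulls back to an $H_2$-identity of $A$, comparing $c_n^H(W)$ with $c_n^{H_2}(A)$) gives $\exp^{H}(W)=\exp^{H_2}(A)$, an integer by Gordienko's finite-dimensional result.
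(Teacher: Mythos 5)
Your proposal is correct and takes essentially the same route as the paper, which simply says the statement follows ``using the construction in Kemer's Lemma~2 and the previous theorem''; you spell out that route by feeding the $\mu$-fold alternating non-identity from Gordienko's Lemma~10 into the frame-insertion and big-set construction of Kemer's Lemma~2 and tracking that the number of radical variables, frames, and hence big-set and leftover variables is bounded independently of $n$. The cautionary remarks you raise (whether the externally supplied small sets are compatible with the Kemer Lemma~2 machinery, and the need for full/property-$K$ behaviour on a basic component) are exactly the points one must verify, and your bookkeeping correctly identifies $|X_0|+s(d+1)+O(1)$ as the resulting uniform bound $\beta$.
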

Suppose $A=A_{1}\times\cdots\times A_{r}$ is a product of $H_{2}$-simple
algebras and suppose $f$ from the previous theorem is an $H_{2}$-Kemer
polynomial of $A$. Therefore, $f$ is also an $H_{2}$-Kemer polynomial
of one of the $A_{i}$, say $A_{1}$. Denote by $B$ the $H_{2}$-algebra
$A_{ss}$. We may assume that each $X_{i}\in\{X_{0},...,X_{\mu+s}\}$
can be replaced by $Y_{i}\cup Z_{i}$, where $Y_{i}$ is a set of
even variables and $Z_{i}$ of odd variables, such that the resulting
polynomial is a non-identity of $A_{1}$. Surely, $|Y_{i}|=\dim B_{0}=d$
and $|Z_{i}|=\dim B_{1}=l$ for $i=1...\mu$. Indeed, consider a non-zero
substitution of $f$ by elements from $B_{0}\cup B_{1}\cup J(A_{1})$.
Each big set of $f$ has $\dim B+1$ elements, so any non-zero substitution
must include a radical element. There are $s$ big sets and the nilpotency
index of $J(A_{1})$ is $s+1$, hence all the other substitutions
of variables of $f$ must be semisimple. Moreover, since $f$ is alternating
on each one of the small sets, we must substitute a full basis in
each of them. Thus, $d$ variables of each $X_{i}$ must be even and
the rest odd. Let us call the new polynomial also $f$.
\begin{defn}
Suppose $f=f(y_{1},...,y_{n},z_{1},...,z_{s})$ is a multilinear polynomial
in $F^{H}\left\{ Y,Z\right\} $. We may write 
\[
f=\sum a_{\sigma,W,\mathbf{h}}W_{0}z_{\sigma(1)}^{h_{1}}W_{1}z_{\sigma(2)}^{h_{2}}\cdots z_{\sigma(s)}^{h_{s}}W_{s},
\]
where the sum runs on $\sigma\in S_{s}$, $W=(W_{0},...,W_{s})$ -
$s$-tuple of monomials (we allow the zero monomial: $1$) in distinct
variables from $\{y_{1},...,y_{n}\}$ and $\mathbf{h}=(h_{1},...,h_{s})\in\{b_{1},...,b_{m}\}^{s}$
(recall that $b_{1},...,b_{m}$ is a basis of $H$). Then $\tilde{f}$
is defined to be the polynomial
\[
\sum(-1)^{\sigma}a_{\sigma,W,\mathbf{h}}W_{0}z_{\sigma(1)}^{h_{1}}W_{1}z_{\sigma(2)}^{h_{2}}\cdots z_{\sigma(s)}^{h_{s}}W_{s}.
\]

\end{defn}
The following is folklore:
\begin{lem}
Suppose $f$ is a multilinear polynomial in $F^{H}\left\{ Y,Z\right\} $.
Then the following holds:
\begin{enumerate}
\item $f\in id^{H_{2}}(W)\Longleftrightarrow\tilde{f}\in id^{H_{2}}(G(A)).$
\item $c_{n}^{H_{2}}(W)=c_{n}^{H_{2}}(G(W)).$
\end{enumerate}
\end{lem}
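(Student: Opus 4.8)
The plan is to prove both assertions uniformly for an arbitrary $H_2$-module algebra $W$ together with its Grassmann envelope $G(W)=(W_0\otimes E_0)\oplus(W_1\otimes E_1)$, the multiplication being the ordinary one inherited from $W\otimes_F E$ and the $H$-action being $h\cdot(a\otimes e)=(h\cdot a)\otimes e$. The whole argument rests on a single evaluation identity obtained by the classical Grassmann substitution trick. Write the multilinear $f=f(y_1,\dots,y_n,z_1,\dots,z_s)$ in the canonical form $f=\sum a_{\sigma,W,\mathbf{h}}\,W_0 z_{\sigma(1)}^{h_1}W_1 z_{\sigma(2)}^{h_2}\cdots z_{\sigma(s)}^{h_s}W_s$ used to define $\tilde f$; multilinearity guarantees that the permutation $\sigma$ attached to a monomial (the left-to-right order of its $z$-variables) is well defined, so $f\mapsto\tilde f$ is a linear involution of the corresponding multilinear space.

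First I would establish the key identity. Since $G(W)_0=W_0\otimes E_0$ and $G(W)_1=W_1\otimes E_1$ are spanned by pure tensors $w\otimes g$ with $g$ a Grassmann monomial, and $f$ is multilinear, $f\in id^{H_2}(G(W))$ holds iff $f$ vanishes on every substitution $y_i\mapsto w_i\otimes g_i$ $(w_i\in W_0,\ g_i\in E_0)$, $z_j\mapsto v_j\otimes\eta_j$ $(v_j\in W_1,\ \eta_j\in E_1)$. Evaluating a monomial $W_0 z_{\sigma(1)}^{h_1}\cdots z_{\sigma(s)}^{h_s}W_s$ on such a substitution: the $H$-superscripts act only on the $W$-factor; the even Grassmann factors $g_i$ are central in $E$, so they pull out into a single product $\gamma=\prod_i g_i\in E_0$ which is the same for every monomial of $f$; and the odd factors emerge in the order $\eta_{\sigma(1)}\eta_{\sigma(2)}\cdots\eta_{\sigma(s)}=(-1)^{\sigma}\eta_1\cdots\eta_s$ because elements of $E_1$ anticommute. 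Summing over the monomials of $f$, the resulting signs $(-1)^{\sigma}$ are precisely those that convert $f$ into $\tilde f$, so
\[
f\big|_{y_i\mapsto w_i\otimes g_i,\ z_j\mapsto v_j\otimes\eta_j}=\Big(\tilde f\big|_{y_i\mapsto w_i,\ z_j\mapsto v_j}\Big)\otimes\gamma\,\eta_1\cdots\eta_s .
\]
Part (1) then drops out. If $\tilde f\in id^{H_2}(W)$ the right-hand side vanishes for all homogeneous pure-tensor substitutions, hence $f\in id^{H_2}(G(W))$. Conversely, given $w_1,\dots,w_n\in W_0$ and $v_1,\dots,v_s\in W_1$, take $g_i=1$ and $\eta_j=e_j$ (the generators of $E$), so $\gamma=1$ and $e_1\cdots e_s$ is a nonzero basis vector of $E$; the identity forces $\big(\tilde f|_{y_i\mapsto w_i,\ z_j\mapsto v_j}\big)\otimes e_1\cdots e_s=0$ in $W\otimes_F E$, whence $\tilde f|_{y_i\mapsto w_i,\ z_j\mapsto v_j}=0$, and as the $w_i,v_j$ were arbitrary, $\tilde f\in id^{H_2}(W)$.

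For part (2) I would split the multilinear space. The idempotent basis of $(FC_2)^{*}$ gives $P_n^{H_2}=\bigoplus_{Z\subseteq\{1,\dots,n\}}P_Z$, where $P_Z$ is spanned by the multilinear monomials in which index $i$ occurs as the odd variable $z_i$ when $i\in Z$ and as the even variable $y_i$ otherwise. For any $H_2$-module algebra $R$ one has $id^{H_2}(R)\cap P_n^{H_2}=\bigoplus_Z\big(id^{H_2}(R)\cap P_Z\big)$: rescaling a graded substitution by independent nonzero scalars on the even and on the odd variables and using that the base field is infinite separates the $Z$-homogeneous components of a graded identity. Now $f\mapsto\tilde f$ preserves each $P_Z$ and is invertible on it, and by part (1) it maps $id^{H_2}(W)\cap P_Z$ onto $id^{H_2}(G(W))\cap P_Z$, so it induces an isomorphism $P_Z/(id^{H_2}(W)\cap P_Z)\cong P_Z/(id^{H_2}(G(W))\cap P_Z)$. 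Summing dimensions over all $Z$ gives $c_n^{H_2}(W)=c_n^{H_2}(G(W))$. The only point that needs genuine care is the sign bookkeeping in the displayed identity — that the even Grassmann parts really do factor out centrally and that reordering the anticommuting odd parts yields exactly $(-1)^{\sigma}$ — together with the reduction to the support-homogeneous subspaces $P_Z$ in part (2); I expect the sign computation to be the main thing to pin down carefully.
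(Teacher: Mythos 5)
Your proof is correct. The paper does not actually supply a proof---it declares the lemma ``folklore''---but your argument is exactly the standard Grassmann-substitution computation one would expect: the evaluation identity $f|_{y_i\mapsto w_i\otimes g_i,\,z_j\mapsto v_j\otimes\eta_j}=\bigl(\tilde f|_{y_i\mapsto w_i,\,z_j\mapsto v_j}\bigr)\otimes\gamma\,\eta_1\cdots\eta_s$, valid because even Grassmann elements are central and odd ones anticommute to produce the sign $(-1)^\sigma$, yields part (1) at once, and the graded-support decomposition $P_n^{H_2}=\bigoplus_Z P_Z$ together with the fact that $f\mapsto\tilde f$ is a sign-twisting involution preserving each $P_Z$ yields part (2). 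The sign bookkeeping you flagged is indeed the only delicate point, and you have it right; the only cosmetic remark is that the paper's statement contains a typo ($G(A)$ where $G(W)$ is clearly meant, and the paper elsewhere writes $E(\cdot)$ rather than $G(\cdot)$ for the Grassmann envelope), which your proof correctly reads through.
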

The next key Proposition relies on the representation theory of $S_{n}$.
The reader is advised to review \subref{Affine-relatively--module}
for a notation reminder.

\global\long\def\l{\lambda^{\prime}}
 \global\long\def\ll{\lambda^{\prime\prime}}

\begin{prop}
Let $f$ be the polynomial from the previous theorem and let $g=\tilde{f}$.
Then for $\lambda^{\prime}=(\mu^{d})$ and $\lambda^{\prime\prime}=(l^{\mu})$,
there are $T_{\lambda^{\prime}}$ and $T_{\lambda^{\prime\prime}}$
such that $e_{T_{\l}}e_{T_{\ll}}g\notin id^{H_{2}}(E(A))$.\end{prop}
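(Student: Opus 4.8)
The plan is to write down explicit tableaux $T_{\l},T_{\ll}$ and one explicit $H_{2}$-homomorphism $\Phi$ into $E(A)$ with $\Phi(g)\neq0$ such that, relative to $\Phi$, the two Young symmetrizers act on $\Phi(g)$ merely by multiplication by positive integers. Recall first the shape of $f$: by the previous theorem and the discussion following it, $f$ is an $H_{2}$-Kemer polynomial which is a nonidentity of one $H_{2}$-basic direct factor $A_{1}$ of $A$, whose small sets are $X_{i}=Y_{i}\sqcup Z_{i}$ with $Y_{i}$ even of size $d=\dim B_{0}$ and $Z_{i}$ odd of size $l=\dim B_{1}$ ($B=(A_{1})_{ss}$), $i=1,\dots,\mu$, and which is alternating on each $Y_{i}$ and on each $Z_{i}$. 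Since $f$ is alternating on the $Y_{i}$ (resp.\ $Z_{i}$), a linear-algebra (Vandermonde) normalization of a non-vanishing substitution of $f$ on $A_{1}$ lets us arrange that \emph{every} $Y_{i}$ is sent bijectively onto one fixed basis $\beta_{1},\dots,\beta_{d}$ of $B_{0}$, say $y_{k}^{(i)}\mapsto\beta_{k}$, and \emph{every} $Z_{i}$ onto one fixed basis $\gamma_{1},\dots,\gamma_{l}$ of $B_{1}$. Running the argument behind the folklore lemma above on this substitution produces an $H_{2}$-homomorphism $\Phi$ with $\Phi(g)\neq0$, $\Phi(y_{k}^{(i)})=\beta_{k}\otimes1$ and $\Phi(z_{k}^{(i)})=\gamma_{k}\otimes\xi_{i,k}$, where the $\xi_{i,k}$, together with the Grassmann generators used to evaluate the remaining odd variables, are pairwise distinct. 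Note finally that $g=\tilde{f}$ is still alternating on each $Y_{i}$ but is now \emph{symmetric} on each $Z_{i}$, the tilde having turned the $z$-alternation of $f$ into symmetry.

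Now let $T_{\l}$ be the tableau of shape $\l=(\mu^{d})$ whose $j$-th column carries the $d$ variables of $Y_{j}$, and $T_{\ll}$ the tableau of shape $\ll=(l^{\mu})$ whose $j$-th row carries the $l$ variables of $Z_{j}$; thus $\mathcal{C}_{T_{\l}}$ permutes variables within each $Y_{j}$, $\mathcal{R}_{T_{\l}}$ permutes, for each fixed $k$, the $k$-th variables of $Y_{1},\dots,Y_{\mu}$, and symmetrically for $T_{\ll}$. Since $e_{T_{\l}}$ and $e_{T_{\ll}}$ act on disjoint variable sets they commute, and since $g$ is alternating on every column of $T_{\l}$ and symmetric on every row of $T_{\ll}$, an elementary computation with the explicit formula for the Young symmetrizer (using that $\tau\cdot g=(-1)^{\tau}g$ for $\tau\in\mathcal{C}_{T_{\l}}$ and $\sigma\cdot g=g$ for $\sigma\in\mathcal{R}_{T_{\ll}}$) shows that $e_{T_{\l}}e_{T_{\ll}}\cdot g$ is a nonzero integer multiple of the polynomial $h$ obtained from $g$ by symmetrizing the $\mu$ variables in each of the $d$ rows of $T_{\l}$ and alternating the $\mu$ variables in each of the $l$ columns of $T_{\ll}$. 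Hence it is enough to prove $\Phi(h)\neq0$.

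This is where the choice of $\Phi$ is used: each of the two operations defining $h$ acts on $\Phi(g)$ by a scalar. On the one hand, $\Phi$ assigns the same value $\beta_{k}\otimes1$ to all $\mu$ variables in the $k$-th row of $T_{\l}$, so every summand of the row-symmetrization has the same image $\Phi(g)$ under $\Phi$; this operation just contributes the factor $(\mu!)^{d}$. On the other hand, multiplication in $A_{1}\otimes E$ is componentwise, so permuting by $\rho\in S_{\mu}$ the $\mu$ variables in a column $\{z_{k}^{(1)},\dots,z_{k}^{(\mu)}\}$ of $T_{\ll}$ only reorders the anticommuting generators $\xi_{1,k},\dots,\xi_{\mu,k}$ inside \emph{every} monomial of $g$ in the same way, multiplying the image under $\Phi$ by $(-1)^{\rho}$; this exactly cancels the sign $(-1)^{\rho}$ attached to that summand of the alternation, so alternating all $l$ columns contributes only the factor $(\mu!)^{l}$. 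Therefore $\Phi(h)=(\mu!)^{d+l}\,\Phi(g)\neq0$, and hence $e_{T_{\l}}e_{T_{\ll}}g\notin id^{H_{2}}(E(A))$.

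The only step with genuine content is the construction of $\Phi$: producing from the abstract nonidentity $f$ one $H_{2}$-homomorphism which is nonzero on $g$ and simultaneously puts a common basis on all the $Y_{i}$, a common basis on all the $Z_{i}$, and fresh Grassmann generators on the odd variables. I expect the delicate points to be exactly the Vandermonde normalization forcing the common bases and the verification that the Grassmann lift implicit in the folklore lemma can be taken in this diagonal form; once $\Phi$ is fixed, the rest is formal symmetrizer bookkeeping.
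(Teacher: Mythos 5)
The student's proposal replaces the paper's existential argument (show that $g\notin id^{H_2}(E(A))$ forces an isotypic component of shape $(\mu^d)\boxtimes(l^\mu)$, by ruling out all other shapes via alternation/symmetry pigeonhole) by a constructive one: build explicit rectangular tableaux and an explicit substitution $\Phi$ on which the double Young symmetrizer acts by a nonzero scalar. The choice of $T_{\l}$ with $Y_j$ in the $j$-th column, of $T_{\ll}$ with $Z_j$ in the $j$-th row, the Vandermonde normalization that puts all the $Y_i$ on one basis of $B_0$ and all the $Z_i$ on one basis of $B_1$, and the Grassmann sign cancellation for the column alternation of $T_{\ll}$ are all correct in spirit and give something the paper's proof does not: a concrete witness.

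There is, however, a genuine gap in the symmetrizer bookkeeping, and it is exactly the ``elementary computation'' that is not carried out. The claim that $e_{T_{\l}}e_{T_{\ll}}g$ is a positive multiple of $h=a_{T_{\l}}b_{T_{\ll}}g$ uses $\sigma\cdot g=g$ for $\sigma\in\mathcal{R}_{T_{\ll}}$ to make the row symmetrizer $a_{T_{\ll}}$ disappear. But in the paper's convention $e_T=\sum_{\sigma\in\mathcal{R},\,\tau\in\mathcal{C}}(-1)^{\tau}\sigma\tau$ one has $e_{T_{\ll}}g=a_{T_{\ll}}\bigl(b_{T_{\ll}}g\bigr)$, so the row symmetrizer is applied \emph{after} the column alternation, to the polynomial $b_{T_{\ll}}g$, which is no longer row-symmetric; the reduction to a scalar fails. (The ``right action'' label in the paper notwithstanding, its formula and its own use of it in the proof of the relatively free theorem both show that the column operator is applied first and the row operator last: it asserts $e_{T_{\mu}}\cdot f$ is \emph{symmetric} on rows.) Concretely, for $\sigma\in\mathcal{R}_{T_{\ll}}$ one has $\Phi(\sigma\tau g)$ replacing both the $A_1$-factor $\gamma_k$ and the Grassmann label $\xi_{i,k}$ at each slot, so it is not $(-1)^\tau\Phi(g)$; a direct check already at $\mu=l=2$ shows $\Phi(e_{T_{\ll}}g)$ is a combination of genuinely different evaluations of $g$, not a single scalar multiple of $\Phi(g)$. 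Your computation $\Phi(h)=(\mu!)^{d+l}\Phi(g)$ is correct, but $e_{T_{\l}}e_{T_{\ll}}g$ is not a scalar multiple of $h$ with the symmetrizer as defined. The fix is easy and worth stating explicitly: work instead with $e_{T_{\ll}}^{\prime}=b_{T_{\ll}}a_{T_{\ll}}$ (or with $T_{\ll}$ transposed), which generates the same irreducible as $e_{T_{\ll}}$ and for which $e_{T_{\ll}}^{\prime}g=|\mathcal{R}_{T_{\ll}}|\,b_{T_{\ll}}g$ does hold; then the evaluation argument goes through verbatim and, by the standard equivalence of the two symmetrizers, the proposition as stated follows. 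Until that order-of-operations issue is addressed, the ``nonzero integer multiple of $h$'' step is not justified.
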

\begin{proof}
Since $g\notin id^{H_{2}}(E(A))$, there is some $\l=(\lambda_{1}^{\prime},...)\vdash d\mu$
and $T_{\l}$ indexed by variables from $Y=Y_{1}\cup\cdots\cup Y_{\mu}$
such that $e_{T_{\l}}g\notin id^{H_{2}}(E(A))$. We claim that $\l$
is of the shape $(\mu^{d})$:

Write 
\[
e_{1}=\sum_{\sigma\in\mathcal{R}_{T_{\l}}}\sigma,\,\, e_{2}=\sum_{\sigma\in\mathcal{C}_{T_{\l}}}(-1)^{\sigma}\sigma,
\]
 so $e_{T_{\l}}=e_{1}e_{2}$. 

If $\lambda_{1}^{\prime}>d$ , then $e_{T_{\l}}g$ is symmetric on
(at least) $\mu+1$ variables from $Y$. Thus, for every $\sigma\in\mathcal{C}_{T_{\l}}$
at least two of them must fall in the same $\sigma(Y_{i})$. However,
$\sigma g$ is alternating on $\sigma(Y_{i})$, so $e_{1}\sigma g=0\Rightarrow e_{T_{\l}}g=e_{1}e_{2}g=0$
- contradiction.

Suppose $h(\l)>d$, where $h\{\l)$ is the height of $T_{\l}$. Therefore,
$e_{T_{\l}}g$ is alternating on a $d+1$ subset $Y^{\prime}$ of
$Y$. Thus $\widetilde{e_{T_{\l}}g}=e_{T_{\l}}\tilde{g}=e_{T_{\l}}f$
is also alternating on $Y^{\prime}$. On any non-zero substitution
of $f$ (on every $A_{i}$) the radical values appear only in $X_{\mu+1},...,X_{\mu+t}$.
Hence, the non-zero evaluations of $Y^{\prime}$ must consist of semisimple
(even) elements. However, the dimension of the even semisimple part
is $d$, thus $e_{2}e_{1}f\in id^{H_{2}}(A)$, hence $e_{T_{\l}}g\in id^{H_{2}}(E(A))$
- contradiction. All in all, $\l$ must be equal to $(\mu^{d})$.

There is some $\ll=(\lambda_{1}^{\prime\prime},...)\vdash l\mu$ and
$T_{\ll}$ indexed by variables from $Z=Z_{1}\cup\cdots\cup Z_{\mu}$
such that $e_{T_{\ll}}g\notin id^{H_{2}}(G(A))$. As before, we plan
to demonstrate that $\ll$ has the shape $(\mu^{d})$.

Suppose $h(\ll)>\mu$. Write $e_{1}=\sum_{\sigma\in\mathcal{R}_{T_{\ll}}}\sigma$
and $e_{2}=\sum_{\sigma\in\mathcal{C}_{T_{\ll}}}(-1)^{\sigma}\sigma$
so that $e_{T_{\ll}}=e_{1}e_{2}$. $e_{2}g$ is alternating on (at
least) $\mu+1$ variables from $Z$. Thus, at least two of them must
fall in some $Z_{i}$. Since $e_{2}g$ is also symmetric on each $Z_{i}$,
$e_{2}g=0$. So $e_{T_{\ll}}g=0$.

Suppose $\lambda_{1}^{\prime\prime}>l$. Then, $e_{T_{\ll}}g$ is
symmetric on $l+1$ elements of $Z$, say $Z^{\prime}$. Hence $\widetilde{e_{T_{\ll}}g}=\tilde{e}_{T_{\ll}}\tilde{g}$
is alternating on $Y^{\prime}$. where 
\[
\tilde{e}_{T_{\ll}}=\sum_{\tau\in\mathcal{R}_{T_{\ll}}}\sum_{\sigma\in\mathcal{C}_{T_{\ll}}}(-1)^{\sigma}\sigma\tau=\sum_{\tau\in\mathcal{R}_{T_{\ll}}}\sum_{\sigma\in\mathcal{C}_{T_{\ll}}}(-1)^{\sigma}\tau\sigma.
\]
As before, this implies that $\widetilde{e_{T_{\ll}}g}\in id^{H_{2}}(A)$,
hence $e_{T_{\ll}}g\in id^{H_{2}}(E(A))$ - contradiction. All in
all, $\ll$ must be equal to $(l^{\mu})$.

Because, $e_{T_{\l}}$and $e_{T_{\ll}}$act on distinct sets of variables,
we conclude
\[
e_{T_{\l}}e_{T_{\ll}}g\notin id^{H}(E(A)).
\]
 .\end{proof}
\begin{cor}
If we replace the graded variables of $g$ by non graded ones, we
obtain an $H$-polynomial (which we continue to denote by $g$) which
is not in $id^{H}(E(A))$. Moreover, $e_{T_{\l}}e_{T_{\ll}}g\notin id^{H}(E(A))$.\end{cor}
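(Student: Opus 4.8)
The plan is to transfer the conclusion $e_{T_{\l}}e_{T_{\ll}}g\notin id^{H_{2}}(E(A))$ of the previous Proposition from the graded to the ungraded setting, using that $E(A)$, viewed merely as an $H$-module algebra, is $C_{2}$-graded with both homogeneous components $H$-invariant, so that its ungraded $H$-identities are detected on $C_{2}$-homogeneous substitutions. Recall $g$ is multilinear and each of its variables carries a $C_{2}$-parity (the variables of $Y$ being even, those of $Z$ odd), while $e_{T_{\l}}$ permutes variables of $Y$ and $e_{T_{\ll}}$ permutes variables of $Z$. ``Replacing the graded variables of $g$ by ungraded ones'' means relabelling all the variables of $g$ as ordinary (ungraded) variables $x_{i}$ inside every monomial, i.e. replacing each $v^{h}$ by the corresponding $x^{h}$; this yields an honest $H$-polynomial $\hat{g}$, and since $e_{T_{\l}},e_{T_{\ll}}$ only permute variable indices they commute with the relabelling, so $e_{T_{\l}}e_{T_{\ll}}\hat{g}=\widehat{e_{T_{\l}}e_{T_{\ll}}g}$.

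First I would record the elementary principle that a multilinear ungraded $H$-polynomial $p$ lies in $id^{H}(E(A))$ if and only if $p$ vanishes on every substitution of its variables by $C_{2}$-homogeneous elements of $E(A)$; this follows at once from $E(A)=E(A)_{0}\oplus E(A)_{1}$ with both summands $H$-invariant, and is the same bookkeeping as in the ``folklore'' Lemma above. In addition, evaluating $\hat{g}$ on a homogeneous substitution in which each $x_{i}$ takes a value whose parity matches the original parity of the variable it came from returns exactly the value of $g$, regarded as an $H_{2}$-polynomial, on the corresponding parity-respecting substitution into $E(A)$.

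Granting this, the argument concludes as follows. If $\hat{g}\in id^{H}(E(A))$, then specializing to the homogeneous substitutions that respect all the original parities shows $g$ vanishes on every such substitution, i.e. $g\in id^{H_{2}}(E(A))$; since $id^{H_{2}}(E(A))$ is stable under the $S_{n}$-action permuting variables, this would force $e_{T_{\l}}e_{T_{\ll}}g\in id^{H_{2}}(E(A))$, contradicting the Proposition. Hence $\hat{g}\notin id^{H}(E(A))$, the first assertion. The second follows by the same argument applied to $e_{T_{\l}}e_{T_{\ll}}g$ in place of $g$: it is again multilinear with the same parities on its variables, so if $\widehat{e_{T_{\l}}e_{T_{\ll}}g}\in id^{H}(E(A))$ then $e_{T_{\l}}e_{T_{\ll}}g\in id^{H_{2}}(E(A))$, again contradicting the Proposition; thus $e_{T_{\l}}e_{T_{\ll}}g=e_{T_{\l}}e_{T_{\ll}}\hat{g}\notin id^{H}(E(A))$. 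I expect no real obstacle here: the substantive content is already in the Proposition, and the only step that is not pure relabelling is the homogeneity reduction of the first paragraph, which is entirely standard.
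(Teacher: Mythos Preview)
Your argument is correct and is precisely the routine verification the paper leaves implicit (the corollary is stated without proof). The only minor redundancy is the detour through $S_n$-stability of $id^{H_2}(E(A))$ when showing $\hat g\notin id^H(E(A))$: you already know $g\notin id^{H_2}(E(A))$ directly from the setup of the Proposition, so the contradiction is immediate without invoking $e_{T_{\lambda'}}e_{T_{\lambda''}}$ at that stage.
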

\begin{defn}
Let $t>0$ be an integer number. \nomenclature[999]{$h(d,l,t)$}{Tthe partition of $d(l+t)+tl$  given by: $(\underset{d\mbox{ times}}{\underbrace{l+t,...,l+t}},\underset{t\mbox{ times}}{\underbrace{l,...,l}})$\\ }$h(d,l,t)$
is the partition of $d(l+t)+tl$ given by: 
\[
(\underset{d\mbox{ times}}{\underbrace{l+t,...,l+t}},\underset{t\mbox{ times}}{\underbrace{l,...,l}}).
\]
The corresponding tableau looks like: 

\begin{xy}<1cm,0cm>:
(22,0)="A";
(22,4)="B" **@{-};
(26,4)="C" **@{-};
(26,2.5)="D" **@{-};
(23.5,2.5)="E" **@{-};
(23.5,0)="F" **@{-}; 
"A" **@{-},
\ar@{<->}|-{\textbf{d}} "C" + (0.2,0);  "D" + (0.2,0),
\ar@{<->}|-{\textbf{t}} "E" + (0.2,0);  "F" + (0.2,0),
\ar@{<->}|-{\textbf{t}} "E" - (0,0.2);  "D" - (0,0.2),
\ar@{<->}|-{\textbf{l}} "A" - (0,0.2);  "F" - (0,0.2),
\end{xy}
\end{defn}
\begin{cor}
For any $\mu>0$, there exist a partition $\lambda$ of $\mu(d+l)$
such that 
\[
h(d,l,\mu-d-l)\leq\lambda\leq h(d,l,\mu)
\]
and for $n=\mu(d+l)+\beta$ the space $P_{n}^{H}/P_{n}^{H}\cap id^{H}(G(A))$
contains an irreducible $S_{|\lambda|}$-module corresponding to $\lambda$.
Furthermore, 
\[
c_{n}^{H}(G(A))\geq Cn^{\gamma}\left(\exp^{H_{2}}(A)\right)^{n},
\]
for some numbers $C$ and $\gamma$. \end{cor}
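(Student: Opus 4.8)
### Proof Proposal

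The plan is to extract from the previous corollary a lower bound on the multiplicities and dimensions of the irreducible $S_n$-modules appearing in $P_n^H/(P_n^H \cap id^H(G(A)))$, and then sum these over the relevant range of partitions to get the stated asymptotic bound on $c_n^H(G(A))$. First I would fix $\mu > 0$ and set $n = \mu(d+l) + \beta$, where $\beta$ is the uniform bound on $|X_0 \cup X_{\mu+1} \cup \cdots \cup X_{\mu+s}|$ coming from the earlier theorem. The preceding corollary (via the Proposition and its corollaries) produces a polynomial $g = \tilde f$ with $e_{T_{\lambda'}} e_{T_{\lambda''}} g \notin id^H(E(A))$ where $\lambda' = (\mu^d)$ on the $Y$-variables and $\lambda'' = (l^\mu)$ on the $Z$-variables; these are nested inside a single partition $\lambda$ of $\mu(d+l)$ (after the identification of $Y$- and $Z$-variables with ordinary ones) with $h(d,l,\mu-d-l) \le \lambda \le h(d,l,\mu)$. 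The existence of a nonzero $e_{T_\lambda}$-image means the irreducible $S_n$-character $\chi_\lambda$ (padded with the $\beta$ extra boxes in the appropriate way, or rather, the cocharacter of $G(A)$ in degree $n$) occurs with multiplicity $\ge 1$.

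Next I would invoke the standard representation-theoretic lower bound: if an irreducible $S_n$-module $W_\lambda$ occurs in the cocharacter sequence of $G(A)$ with multiplicity at least $1$, then $c_n^H(G(A)) \ge \dim W_\lambda = d_\lambda$ (the number of standard Young tableaux of shape $\lambda$). To turn this into the claimed growth I would use the hook-length formula / the classical estimate that for a partition $\lambda \vdash N$ with at most $k$ rows and at most $m$ columns being "balanced" in the sense that it contains a $d \times \mu$ rectangle plus an $\mu \times l$ rectangle (which is exactly what $h(d,l,t) \le \lambda \le h(d,l,\mu)$ encodes, with $d+l$ rows), one has $d_\lambda \ge C' N^{\gamma'} (d+l)^N$ for suitable constants. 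Since $N = \mu(d+l)$ and $n = N + \beta$ with $\beta$ fixed, and since $d + l = \dim B = \dim A_{ss}$ where by the discussion preceding the Proposition $\exp^{H_2}(A) = \alpha(A) = \dim B = d+l$ (using that $A_1$, the relevant basic component, is full), we get $c_n^H(G(A)) \ge C n^\gamma (\exp^{H_2}(A))^n$. I would also note $c_n^H(W) = c_n^{H_2}(W_E) = c_n^{H_2}(G(A))$ from the folklore lemma quoted above, so the bound transfers to $W$.

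The main obstacle I anticipate is making the passage from "$e_{T_\lambda} g \ne 0$ in $E(A)$" to "$\chi_\lambda$ appears in the degree-$n$ cocharacter of $G(A)$ with multiplicity $\ge 1$" fully rigorous, because $g$ lives in degree $n = \mu(d+l) + \beta$ but the tableau $T_\lambda$ only indexes $\mu(d+l)$ of its variables — the remaining $\beta$ variables (the small "$X_0$" variables and the radical slots in the big sets) are carried along as parameters. The clean way to handle this is to first symmetrize/alternate only on the $Y \cup Z$ variables, observe that the resulting element of $P_n^H(G(A))$ is nonzero and generates (under the $S_n$-action) a submodule whose restriction to $S_{\mu(d+l)}$ contains $W_\lambda$, hence by Frobenius reciprocity the induced module $W_\lambda \uparrow^{S_n}$ — whose irreducible constituents all have shapes obtained from $\lambda$ by adding $\beta$ boxes — contributes, and every such constituent still has $\ge d+l$ rows and the rectangle structure, so the dimension estimate $d_{\lambda^+} \ge C n^\gamma (d+l)^n$ survives. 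Once this bookkeeping is done, the remaining estimates are routine applications of the hook-length formula and Stirling's approximation, exactly as in \cite{Giambruno2005} (Chapter 6.3), so I would cite that for the numerical details rather than reproduce them.
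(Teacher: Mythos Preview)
Your approach is essentially the same as the paper's, and the second half (the asymptotic bound via hook-length estimates, deferring the numerics to Giambruno--Zaicev) matches what the paper does: it simply cites Lemma~6.2.5 in \cite{Giambruno2005}.

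There is one genuine soft spot in your write-up, however. You say that $\lambda'=(\mu^d)$ and $\lambda''=(l^\mu)$ ``are nested inside a single partition $\lambda$ of $\mu(d+l)$ (after the identification of $Y$- and $Z$-variables with ordinary ones)'' and that this $\lambda$ automatically satisfies $h(d,l,\mu-d-l)\le\lambda\le h(d,l,\mu)$. That step is not a mere identification: the passage from the $S_{d\mu}\times S_{l\mu}$-module $W_{\lambda'}\boxtimes W_{\lambda''}$ (which is what the nonvanishing of $e_{T_{\lambda'}}e_{T_{\lambda''}}g$ detects) to an irreducible $S_{(d+l)\mu}$-constituent $W_\lambda$ is governed by the Littlewood--Richardson rule, and it is precisely the LR constraints for the product of the two rectangles $(\mu^d)$ and $(l^\mu)$ that force $\lambda$ to lie between the two hook shapes. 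The paper handles this in one line by citing the LR rule (Theorem~2.3.9 in \cite{Giambruno2005}); you should do the same rather than leave the shape bounds unexplained.

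Your ``main obstacle'' paragraph about the $\beta$ leftover variables and Frobenius reciprocity is in fact more scrupulous than the paper, which absorbs that issue silently into the citation of Lemma~6.2.5. Your handling of it is fine.
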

\begin{proof}
The first part follows from the previous corollary by means of the
Littlewood-Richardson rule (see Theorem 2.3.9 in \cite{Giambruno2005}).
The second part follows from the first part and Lemma 6.2.5 in \cite{Giambruno2005}.
\end{proof}
To finish the proof of \thmref{exp}, it is suffice to establish that
$c_{n}^{H}(E(A))\leq c_{n}^{H_{2}}(A)$. Indeed, for every $H_{2}$-module
algebra $A$, 
\[
\frac{P_{n}^{H}}{P_{n}^{H}\cap id^{H}(A)}\hookrightarrow\frac{P_{n}^{H_{2}}}{P_{n}^{H_{2}}\cap id^{H_{2}}(A)},
\]
where the map is induced by $x_{i}\to y_{i}+z_{i}$. 

Thus, $c_{n}^{H}(E(A))\leq c_{n}^{H_{2}}(E(A))$. Since $c_{n}^{H_{2}}(E(A))=c_{n}^{H_{2}}(A)$,
we obtain $c_{n}^{H}(E(A))\leq c_{n}^{H_{2}}(A)$.

\title{\bibliographystyle{plain}
\phantomsection\addcontentsline{toc}{section}{\refname}\bibliography{Ref}
}
\end{document}